\theoremstyle{plain}
\newtheorem{theorem}{Theorem}[section]
\newtheorem{lemma}[theorem]{Lemma}
\newtheorem{proposition}[theorem]{Proposition}
\theoremstyle{definition}
\newtheorem{definition}[theorem]{Definition}
\theoremstyle{remark}
\newtheorem{remark}[theorem]{Remark}
\numberwithin{equation}{section}
\def\softd{{\leavevmode\setbox1=\hbox{d}%
              \hbox to 1.05\wd1{d\kern-0.4ex{\char039}\hss}}}
\newcommand{\ldblbrace}{\{\mskip-5mu\{}
\newcommand{\rdblbrace}{\}\mskip-5mu\}}
\newcommand{\R}{\mathbb{R}}
\newcommand{\N}{\mathbb{N}}
\newcommand{\Cinf}{C_c^\infty}
\newcommand{\tdom}{(0,T)}
\newcommand{\odom}{\tdom\times\Omega}
\newcommand{\F}{\mathcal{F}} 
\newcommand{\D}{\mathcal{D}} 
\newcommand{\M}{\mathcal{M}} 
\newcommand{\K}{\mathcal{K}} 
\newcommand{\E}{\mathcal{E}}
\newcommand{\G}{\mathcal{G}}
\newcommand{\I}{\mathcal{I}}
\newcommand{\Pro}{\mathcal{P}} 
\newcommand{\veps}{\varepsilon}
\newcommand{\X}{\mathcal{X}}
\newcommand{\half}{\frac{1}{2}}
\newcommand{\dx}{\mathrm{d}x}
\newcommand{\dy}{\mathrm{d}y}
\newcommand{\dt}{\mathrm{d}t}
\newcommand{\V}{\mathcal{V}_{t,x,y}}
\renewcommand{\th}{\widetilde{h}}
\newcommand{\tmx}{\widetilde{m_x}}
\newcommand{\tmy}{\widetilde{m_y}}
\renewcommand{\k}{{(k)}}
\newcommand{\T}{\mathcal{T}}
\newcommand{\Lt}{\mathcal{L}_{\mathcal{T}}}
\newcommand{\delt}{\Delta t}
\newcommand{\delx}{\Delta x}
\newcommand{\dely}{\Delta y}
\newcommand{\derx}{\eth_x}
\newcommand{\dery}{\eth_y}
\renewcommand{\ij}{{i,j}}
\newcommand{\ipj}{{i+\half,j}}
\newcommand{\imj}{{i-\half,j}}
\newcommand{\ijp}{{i,j+\half}}
\newcommand{\ijm}{{i,j-\half}}
\newcommand{\norm}[1]{\left\lVert#1\right\rVert}
\newcommand{\abs}[1]{\lvert#1\rvert}
\newcommand{\weakstar}{\overset{\ast}{\rightharpoonup}}
\newcommand{\Dt}{\partial_t}
\newcommand{\Dx}{\partial_x}
\newcommand{\Dy}{\partial_y}
\title[A Convergent Structure-Preserving Scheme for RSW System]{A
  Convergent Structure-Preserving Scheme for Dissipative Solutions of
  the Rotating Shallow Water System} 
\author[Arun]{K.R.\ Arun}
\author[Krishnamurthy]{A.\ Krishnamurthy}
\thanks{The authors are grateful to Maria Luk\'{a}\v{c}ov\'{a} for her
  encouragement, support, and many valuable discussions on the topic.} 
\address{School of Mathematics, Indian Institute of Science Education
  and Research Thiruvananthapuram, Thiruvananthapuram 695551, India} 
\email{arun@iisertvm.ac.in, amoghk0720@iisertvm.ac.in}
\date{\today}
\subjclass{35D99, 35L45, 35L65, 35R06, 65M08, 76M12}
\keywords{Rotating shallow water system, Dissipative measure-valued
  solutions, Geostrophic equilibria, Finite volume method,
  Well-balancing, Energy stability, Consistency}
\begin{document}

\begin{abstract}
  We design and analyse a semi-implicit finite volume scheme for the
  two-dimensional rotating shallow water (RSW) equations that is
  energy stable, well-balanced (capable of preserving discrete
  geostrophic steady states), consistent, and covergent. The key idea
  is the introduction of carefully chosen stabilisation terms into the
  convective fluxes of the mass and momentum equations, as well as the
  source terms. Under a CFL-type condition, together with an auxiliary
  time-step restriction arising from the Coriolis forces, we establish
  the energy stability of the scheme. The stabilisation terms are
  constructed to vanish at steady states, thereby ensuring the
  well-balancing property under an appropriate advective CFL
  condition. We derive a sufficient time-step restriction that
  guarantees stability, well-balancing, existence of discrete
  solutions, and positivity simultaneously. Furthermore, under mild
  boundedness assumptions, we obtain a priori estimates showing that
  the stabilisation terms converge to zero as the mesh is refined,
  which establishes the consistency of the scheme. This in turn
  enables us to prove that numerical solutions generate a Young
  measure, identifiable as a dissipative measure-valued solution of
  the RSW system, thereby yielding convergence of the scheme. Finally,
  we confirm the theoretical results through extensive numerical
  experiments.  
\end{abstract}

\maketitle

\section{Introduction}
\label{sec:intro}
The rotating shallow water (RSW) system is a fundamental model in
geophysical fluid dynamics, capturing essential features of
large-scale oceanic and atmospheric flows. Its solutions exhibit a
wide range of nonlinear wave phenomena, including gravity waves,
advection waves, and balanced modes. The presence of Coriolis forcing
introduces additional complexity by admitting nontrivial equilibrium states
such as geostrophic balance, in which pressure gradients are exactly
balanced by rotational effects. Such equilibria play a central role in
geophysics, since many physically relevant flow structures, such as
jets and eddies, can be interpreted as perturbations of these equilibrium
states; see \cite{Ped12}. Consequently, the RSW system serves both as a
simplified prototype for rigorous mathematical analysis and as a
benchmark model for the design of numerical methods.   

In this work, we consider the two-dimensional (2D) RSW system with
source terms accounting for bottom topography and Coriolis
forces. The governing balance laws admit nonlinear shocks,
rarefaction waves, and contact waves induced by discontinuous
topography. To accommodate such phenomena, weak (distributional)
solutions are introduced. However, weak solutions are generally
non-unique, and physically relevant states are typically selected via
an admissibility criterion in the form of an entropy condition. While
this framework is well understood in the scalar case, its limitations
in multiple space dimensions have become apparent. Chiodaroli et al.\
\cite{CLK15} and De Lellis and Sz{\'e}kelyhidi \cite{LS10} established
non-uniqueness of entropy weak solutions for the barotropic Euler
equations, and Feireisl et al.\ \cite{FKK+20} extended this to the
full Euler system. The resulting ill-posedness is linked to the lack
of compactness: even sequences of bounded entropy weak solutions may
exhibit oscillations and concentrations.  

The notion of measure-valued (MV) solutions, first formulated by
DiPerna \cite{DiP85} in the study of general conservation laws, has
gained renewed attention in recent years. MV solutions are particularly
useful since they arise as limits of oscillatory approximate sequences
and can be described through Young measures, which capture the
underlying oscillations; see \cite{FLM+21a}. In this work, we
concentrate on the so-called dissipative measure-valued (DMV)
solutions, a refinement of MV solutions introduced by Feireisl et al.\
for the Euler equations in \cite{BF18a}. DMV solutions provide a natural
weak-topology closure of the solution set, where the dissipative weak 
solutions correspond to the barycenters of the Young measures. This
framework is of special interest because it admits global existence in
time, and it preserves the weak–strong uniqueness property: whenever a 
strong solution exists for the same initial data, the corresponding
DMV solution agrees with it. A detailed account of the subject can be
found in \cite{BF18a, FLM+21a}. 

For hyperbolic balance laws with source terms, it is highly desirable
to construct numerical schemes that preserve discrete analogues of
steady states, commonly referred to as well-balanced schemes; see
\cite{ABB+04, CLP08, FMT11, KP07, LNK07, NPP+06, Par06}. In addition, to accurately
capture complex flow features such as shocks and rarefactions,
properties like positivity, energy stability and consistency are
indispensable. Collectively, these features fall under the broader
class of structure-preserving schemes; see, e.g.\ \cite{LeF14}. The
numerical approximation of conservation laws poses significant
challenges due to the presence of discontinuities in weak solutions,
where classical finite difference methods often fail. In contrast,
finite volume methods are specifically tailored to handle such
discontinuities with high resolution, providing a robust and efficient
alternative. Their development over the past decades is now well
established and extensively documented; see, e.g.\ \cite{GR21}. The
convergence of these schemes to weak solutions of the compressible
Euler equations has been analysed in detail, e.g.\ \cite{HLN18},
though typically under restrictive assumptions. 

Within the DMV framework, however, it can be shown that,
subject to suitable stability and consistency conditions, finite
volume approximations converge in a weak sense to DMV solutions under
natural boundedness requirements. A recent survey of progress in this
direction is given in \cite{FLM+21a}. A key limitation, however, 
is that one obtains only weak convergence of
numerical solutions. To overcome this drawback, the so-called 
$\mathcal{K}$-convergence techniques are employed to recover strong
convergence results. Since numerical approximations converge merely in
the weak sense, leading to a DMV solution as the limit, identifying the
precise limit solution remains nontrivial. Within the
$\mathcal{K}$-convergence framework, the arithmetic averages of
numerical solutions converge strongly to the barycenters
(expectations) of the Young measure associated with the DMV limit; see
\cite{FLM+21a, FLM+21b} for a detailed discussion.  

\subsection{Aims and Scope of the Paper}
\label{subsec:aims-scope}

This work is concerned with the design and analysis of a novel
structure-preserving finite volume scheme for the 2D RSW
equations. Unlike standard approaches, the proposed scheme
simultaneously addresses energy stability, well-balancing with respect
to geostrophic steady states, and convergence toward DMV
solutions. The key mechanism is the introduction of carefully
constructed stabilisation terms into the convective fluxes and source
terms, which guarantee stability and positivity while vanishing at
equilibrium, thereby preserving the delicate steady states of the RSW
system.   

We establish energy stability under a CFL-type condition supplemented
by an auxiliary time-step restriction induced by the Coriolis
force. The vanishing of the stabilisation terms at steady states
allows us to prove that geostrophic equilibria are preserved under an
appropriate advective CFL restriction. Moreover, by deriving strong a
priori estimates, we show that the stabilisation terms disappear in
the limit of mesh refinement, which ensures consistency and enables us
to prove convergence of the numerical solutions to DMV solutions of
the RSW system. To our knowledge, this is the first rigorous
demonstration of DMV convergence for a structure-preserving scheme
applied to the RSW equations. The theoretical results are further
substantiated through extensive numerical experiments, which confirm
the scheme’s stability, well-balancing, and convergence properties. 

\subsection{Rotating Shallow Water Equations}
\label{subsec-RSW}
We consider an initial boundary value problem for the 2D RSW equations
with a non-flat bottom on $\odom$, where $T>0$ and $\Omega$ is an
open, bounded subset of $\R^2$. The system of equations reads   
\begin{subequations}
  \label{eqn:cons-rsw}
  \begin{gather}
    \Dt h + \Dx(hu) + \Dy(hv) = 0, \label{eqn:mss-bal-con} \\
    \Dt(hu) + \Dx(hu^2) + \Dy(huv) + \Dx\Bigl(\half gh^2\Bigr) =
    -gh\Dx b + \omega hv, \label{eqn:x-mom-bal-con}\\ 
    \Dt(hv) + \Dx(huv) + \Dy(hv^2) + \Dy\Bigl(\half gh^2\Bigr) =
    -gh\Dy b - \omega hu, \label{eqn:y-mom-bal-con}\\ 
    h(0,x,y) = h_0(x,y)>0,\quad u(0,x,y) = u_0(x,y),\quad v(0,x,y) =
    v_0(x,y). \label{eqn:ini-data} 
  \end{gather}
\end{subequations}
Here, $h = h(t,x,y)$ denote the water depth measured above the
stationary bottom topography $b = b(x,y) \in
W^{2,\infty}(\Omega)$. The horizontal velocity field is given by
$(u,v) = (u(t,x,y), v(t,x,y))$, where $u$ and $v$ represent the
velocity components in the $x$- and $y$-directions, respectively. The
constants $g$ and $\omega$ denote the gravitational acceleration and
the Coriolis parameter. The initial data are assumed to satisfy
$h_0,\, u_0,\, v_0 \in L^\infty(\Omega)$. For simplicity, we impose
periodic boundary conditions, so that the spatial domain $\Omega$ is
identified with the 2D torus $\mathbb{T}^2$. 

Since one of our main objectives is to design a well-balanced scheme,
we consider the non-conservative form of \eqref{eqn:cons-rsw}, written
in terms of the scalar potential $\phi = g(h+b)$. In this formulation,
the system takes the form 
\begin{subequations}
  \label{eqn:non-cons-rsw}
  \begin{gather}
    \Dt h + \Dx(hu) + \Dy(hv) = 0, \label{eqn:mss-bal-nc} \\
    \Dt(hu) + \Dx(hu^2) + \Dy(huv) + h\Dx\phi = \omega
    hv, \label{eqn:x-mom-bal-nc}\\ 
    \Dt(hv) + \Dx(huv) + \Dy(hv^2) + h\Dy\phi = - \omega
    hu. \label{eqn:y-mom-bal-nc} 
  \end{gather}
\end{subequations}

\subsubsection{Steady State Solutions}

Due to the presence of source terms, the RSW system
\eqref{eqn:non-cons-rsw} admits multiple equilibrium states. Among
these, the states for which the material derivatives vanish are
referred to as the nonlinear geostrophic equilibria. In particular, a
solution $(h,u,v)$ of the RSW system is said to be at geostrophic
equilibrium if the following conditions are satisfied: 
\begin{subequations}
  \label{eqn:geo-bal}
  \begin{gather}
    \Dx u + \Dy v = 0, \\
    \Dx\phi - \omega v = 0,\quad \Dy\phi + \omega u = 0.
  \end{gather}
\end{subequations}
For the present work, we place a particular emphasis on the
geostrophic steady states, i.e.\ solutions at geostrophic equilibrium
that are also stationary in time, commonly referred to as jets in the
rotating frame. These steady states are characterised by either of the
following conditions: 
\begin{gather}
  u = 0,\quad \Dy v = 0,\quad \Dy h = 0,\quad \Dy b = 0,\quad
  \Dx\phi - \omega v = 0, \label{eqn:jet-x} \\
  v = 0,\quad \Dx u = 0,\quad \Dx h = 0,\quad \Dx b = 0,\quad \Dy\phi
  + \omega u = 0. \label{eqn:jet-y} 
\end{gather}
It is straightforward to verify that any solution satisfying either
\eqref{eqn:jet-x} or \eqref{eqn:jet-y} also satisfies the geostrophic
balance condition \eqref{eqn:geo-bal}. However, the converse does not
necessarily hold. Therefore, one of the primary objectives of this
work is to construct a well-balanced finite volume scheme that
guarantees the exact preservation of the steady states
\eqref{eqn:jet-x} and \eqref{eqn:jet-y}.

\subsubsection{Energy stability estimates}

The potential and kinetic energies of the system
\eqref{eqn:non-cons-rsw} are given by $\I = \half gh^2 + gbh$ and $\K
= \half h(u^2+v^2)$, and the total energy $E$ is the sum of these
energies, i.e.\ $E = \I+\K$. The total energy $E$ serves as the
entropy associated to the RSW system \eqref{eqn:cons-rsw} and smooth
solutions of \eqref{eqn:cons-rsw} satisfy  
\begin{equation}
  \label{eqn:eng-eq}
  \Dt E + \Dx\biggl(\Bigl(\phi + \frac{u^2 + v^2}{2}\Bigr)hu\biggr)
  + \Dy\biggl(\Bigl(\phi + \frac{u^2 + v^2}{2}\Bigr)hv\biggr) =
  0. 
\end{equation}
In the case of weak solutions, the total energy is dissipated rather
than conserved. In the discrete setting, it is thus essential to
incorporate a mechanism that ensures the dissipation of the discrete
total energy so as to maintain the stability of the numerical
solutions. To achieve this, we construct a discrete analogue of
\eqref{eqn:eng-eq} by means of a stabilisation technique, the details
of which are presented in the subsequent sections. 

\subsection{Dissipative Measure-Valued Solutions}

The framework of dissipative measure-valued (DMV) solutions is
relatively recent, having been introduced and developed by Feireisl,
Luk\'{a}\v{c}ov\'{a}, and their collaborators in a series of works; see
\cite{BF18a, FLM+21a, FLM+21b} and the references therein for a
comprehensive exposition. A key advantage of DMV solutions lies in
their realisation as limits of suitably  designed numerical schemes;
see, e.g.\ \cite{FLM+21a}.  

To define the notion of a DMV solution, it is convenient to work with
the conservative variables $(h, m_x, m_y)$, where $m_x = hu$ and $m_y
= hv$ denote the $x$- and $y$-components of the momentum,
respectively. Recasting the system \eqref{eqn:cons-rsw} in terms of
these conservative variables yields 
\begin{subequations}
  \label{eqn:cv-rsw}
  \begin{gather}
    \Dt h + \Dx m_x + \Dy m_y = 0, \label{eqn:mss-bal-cv} \\
    \Dt m_x + \Dx\Bigl(\frac{m_x^2}{h}\Bigr) + \Dy\Bigl(\frac{m_x
      m_y}{h}\Bigr) + \Dx\Bigl(\half gh^2\Bigr) = -gh\Dx b + \omega
    m_y, \label{eqn:x-mom-bal-cv}\\ 
    \Dt m_y + \Dx\Bigl(\frac{m_xm_y}{h}\Bigr) +
    \Dy\Bigl(\frac{m_y^2}{h}\Bigr) + \Dy\Bigl(\half gh^2\Bigr) =
    -gh\Dy b - \omega m_x, \label{eqn:y-mom-bal-cv}\\ 
    h(0,x,y) = h_0(x,y),\quad m_x(0,x,y) = m_{0,x}(x,y),\quad
    m_y(0,x,y) = m_{0,y}(x,y). \label{eqn:ini-data-cv} 
    \end{gather}
\end{subequations}

Let $\F$ denote the phase space defined as 
\begin{equation}
  \label{eqn:phase-space}
  \F = \lbrace(\th,\tmx,\tmy)\in\R^3\vert\,\th>0\rbrace
\end{equation}
and let $\Pro(\F)$ denote the set of all probability measures on
$\F$. Inspired by \cite{FLM+21a}, and following the notation therein,
we define a DMV solution of the RSW system as follows. 

\begin{definition}[Dissipative measure-valued solution of the RSW system]
  \label{def:dmv-sol}
  A Young-measure $\mathcal{V} = \lbrace\V\rbrace_{(t,x,y)\in\odom}\in
  L^\infty_{weak-*}(\odom;\Pro(\F))$ along with the concentration
  defect measures  
  \[
    \mathfrak{E}_{cd}\in L^\infty(0,T;\M^+(\Omega)) \text{ and }
    \mathfrak{R}_{cd} = \begin{bmatrix} \mathfrak{R}^{x}_{cd} &
      \mathfrak{R}^{x,y}_{cd} \\ \mathfrak{R}^{x,y}_{cd} &
      \mathfrak{R}^y_{cd}\end{bmatrix}\in
    L^\infty(0,T;\M(\Omega;\R^{2\times 2}_{sym})) 
  \]
  is a dissipative measure-valued solution of the RSW system
  \eqref{eqn:mss-bal-cv}-\eqref{eqn:ini-data-cv} if  
  \begin{itemize}
  \item the map $(t,x,y)\mapsto\langle\V;\th\rangle\in
    C_{weak}(\lbrack0,T\rbrack;L^2(\Omega))$ and the integral identity 
    \begin{equation}
      \label{eqn:dmv-mass}
      \begin{split}
        \biggl\lbrack\iint\limits_\Omega
        h_0\psi(0,\cdot)\,\dx\,\dy\biggr\rbrack_{t=0}^{t=\tau} =
        \int\limits_{0}^{\tau}\!\!\!\iint\limits_{\Omega}\lbrack\langle\V;\th\rangle\Dt\psi
        &+ \langle\V;\tmx\rangle\Dx\psi \\
        &+\langle\V;\tmy\rangle\Dy\psi\rbrack\,\dx\,\dy\,\dt 
      \end{split}
    \end{equation}
    is satisfied for any $\tau\in\lbrack0,T\rbrack$ and
    $\psi\in\Cinf(\lbrack 0,T)\times\Omega)$; 

  \item the maps
    $(t,x,y)\mapsto\langle\V;\tmx\rangle,\,(t,x,y)\mapsto\langle\V;\tmy\rangle\in
    C_{weak}(\lbrack0,T\rbrack;L^{\frac{4}{3}}(\Omega))$ and the
    integral identities 
    \begin{gather}
      \biggl\lbrack\iint\limits_\Omega
      m_{0,x}\Psi(0,\cdot)\,\dx\,\dy\biggr\rbrack_{t=0}^{t=\tau} =
      \int\limits_{0}^{\tau}\!\!\!\iint\limits_{\Omega}\biggl\lbrack\langle\V;\tmx\rangle\Dt\Psi
      + \biggl\langle\V;\frac{\tmx^2}{\th}\biggr\rangle\Dx\Psi
       \nonumber \\  
      +\biggl\langle\V;\frac{\tmx\,\tmy}{\th}\biggr\rangle\Dy\Psi
      +\biggl\langle\V;\half g\th^2\biggr\rangle\Dx\Psi\biggr\rbrack\,\dx\,\dy\,\dt +
      \int\limits_{0}^{\tau}\!\!\!\iint\limits_{\Omega}\lbrack-\langle\V;g\th\rangle\Dx
      b\,\Psi \label{eqn:dmv-x-mom} \\ 
      + \langle\V;\omega\,\tmy\rangle\Psi\rbrack\,\dx\,\dy\,\dt + \int\limits_0^\tau\!\!\!\iint\limits_{\Omega}\Dx\Psi\,\mathrm{d}\mathfrak{R}^x_{cd}(t)\,\dt + \int\limits_0^\tau\!\!\!\iint\limits_{\Omega}\Dy\Psi\,\mathrm{d}\mathfrak{R}^{x,y}_{cd}(t)\,\dt \nonumber
    \end{gather}
    and
    \begin{gather}
      \biggl\lbrack\iint\limits_\Omega m_{0,y}\Phi(0,\cdot)\,\dx\,\dy\biggr\rbrack_{t=0}^{t=\tau} = \int\limits_{0}^{\tau}\!\!\!\iint\limits_{\Omega}\biggl\lbrack\langle\V;\tmy\rangle\Dt\Phi + \biggl\langle\V;\frac{\tmx\,\tmy}{\th}\biggr\rangle\Dx\Phi \nonumber \\ 
      + \biggl\langle\V;\frac{\tmy^2}{\th}\biggr\rangle\Dy\Phi +\biggl\langle\V;\half g\th^2\biggr\rangle\Dy\Phi\biggr\rbrack\,\dx\,\dy\,\dt + \int\limits_{0}^{\tau}\!\!\!\iint\limits_{\Omega}\lbrack-\langle\V;g\th\rangle\Dy b\,\Phi \label{eqn:dmv-y-mom} \\
      - \langle\V;\omega\,\tmx\rangle\Phi\rbrack\,\dx\,\dy\,\dt + \int\limits_0^\tau\!\!\!\iint\limits_{\Omega}\Dx\Phi\,\mathrm{d}\mathfrak{R}^{x,y}_{cd}(t)\,\dt + \int\limits_0^\tau\!\!\!\iint\limits_{\Omega}\Dy\Phi\,\mathrm{d}\mathfrak{R}^{y}_{cd}(t)\,\dt \nonumber
    \end{gather}
    are satisfied for any $\tau\in\lbrack0,T\rbrack$ and $\Psi,\Phi\in\Cinf(\lbrack0,T)\times\Omega)$;

    \item the energy inequality 
    \begin{equation}
      \label{eqn:eng-ineq-dmv}
      \begin{split}
        \iint\limits_\Omega\biggl\langle\V; \frac{\tmx^2}{2\th} + \frac{\tmy^2}{2\th} + \half g\th^2 + gb\th\biggr\rangle\,\dx\,\dy + &\iint\limits_\Omega\mathrm{d}\mathfrak{E}_{cd}(t) \\
        &\leq \iint\limits_\Omega\biggl(\frac{m_{0,x}^2}{2h_0} + \frac{m_{0,x}^2}{2h_0} + \half gh_0^2 + gbh_0\biggr)\,\dx\,\dy.
      \end{split}
    \end{equation}
    is satisfied for a.e. $t\in(0,T)$;

    \item there exist constants $0<\underline{d}\leq\overline{d}$ such that
    \begin{equation}
    \label{eqn:def-comp-dmv}
    \underline{d}\mathfrak{E}_{cd}\leq \text{tr}(\mathfrak{R}_{cd})\leq \overline{d}\mathfrak{E}_{cd}.
    \end{equation}
  \end{itemize}
\end{definition}
Thus, the third and final objective of this work is to establish the
existence of a DMV solution to the RSW system, as specified in
Definition \ref{def:dmv-sol}. This is accomplished through a detailed
consistency and convergence analysis of the numerical solutions
produced by our scheme. In particular, we demonstrate that the
discrete solutions generated by the scheme converge, in the sense of
Young measures, to a DMV solution of the RSW system.

\subsection{Stabilisation}
\label{subsec:stab}

As stated earlier, our goal is to design a finite volume scheme for
the RSW system that satisfies the following key properties: 

\begin{itemize}
\item \textbf{Energy Stability}: The numerical solutions obey a
  discrete analogue of \eqref{eqn:eng-eq} to ensure the 
  dissipation of the discrete total energy. 

\item \textbf{Well-Balancing}: The scheme exactly preserves discrete
  variants of the geostrophic steady states
  \eqref{eqn:jet-x}–\eqref{eqn:jet-y}.  

\item \textbf{Consistency and Convergence}: The discrete solutions are
  consistent with the system \eqref{eqn:cv-rsw} and, upon mesh
  refinement, converge (in the sense of Young measures) to a DMV
  solution of the RSW system.  
\end{itemize}

To achieve these properties, we employ the stabilisation technique,
wherein a shift is introduced in the convective fluxes of the mass and
momentum equations, as well as in the source terms; see also
\cite{AGK23, AA24, PV16} and the references therein. The resulting
modified RSW system takes the form 
\begin{subequations}
\label{eqn:stab-rsw}
    \begin{gather}
        \Dt h + \Dx(hu - q) + \Dy(hv-r) = 0, \label{eqn:mss-bal-stab} \\
        \Dt(hu) + \Dx((hu-q)u) + \Dy((hv-r)u) + h\Dx\phi = \omega (hv - r), \label{eqn:x-mom-bal-stab}\\
        \Dt(hv) + \Dx((hu-q)v) + \Dy((hv-r)v) + h\Dy\phi = - \omega (hu - q). \label{eqn:y-mom-bal-stab}
    \end{gather}
\end{subequations}
Here, $q$ and $r$ denote generic perturbations in the $x$ and $y$-components of the momentum respectively. We can obtain the following energy estimates after performing some formal calculations.

\begin{proposition}[A priori estimates of the modified system]
\label{prop:apri-est-mod}
  The solutions of the modified system \eqref{eqn:stab-rsw} satisfy the following total energy identity:
  \begin{equation}
  \label{eqn:mod-energ-idt}
    \begin{split}
      \Dt E + \Dx\biggl(\Bigl(\phi + \frac{u^2+v^2}{2}\Bigr)(hu-q)\biggr) + &\Dy\biggl(\Bigl(\phi + \frac{u^2+v^2}{2}\Bigr)(hv-r)\biggr) \\
      &= -q(\Dx\phi - \omega v) - r(\Dy\phi + \omega u).
    \end{split}
  \end{equation}  
\end{proposition}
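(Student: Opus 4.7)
The plan is to derive the identity \eqref{eqn:mod-energ-idt} by the standard energy route: split $E=\I+\K$ with $\I=\half gh^2+gbh$ and $\K=\half h(u^2+v^2)$, treat the kinetic part by multiplying the momentum balances by $u$ and $v$ and folding in the mass balance, and treat the potential part by exploiting $\Dt\I=\phi\,\Dt h$. The perturbation fields $q,r$ will appear linearly throughout, so I only need to track where they refuse to be absorbed into a conservative divergence.

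First, I would multiply \eqref{eqn:x-mom-bal-stab} by $u$. Using the product rule twice, together with $\Dt h+\Dx(hu-q)+\Dy(hv-r)=0$ from \eqref{eqn:mss-bal-stab}, each of the three transport terms produces a conservative divergence with a factor $\half u^2$ plus a spurious piece $\half u^2[\Dt h+\Dx(hu-q)+\Dy(hv-r)]$ that vanishes. This yields
\begin{equation*}
\Dt\bigl(\tfrac12 hu^2\bigr)+\Dx\bigl(\tfrac12(hu-q)u^2\bigr)+\Dy\bigl(\tfrac12(hv-r)u^2\bigr)+uh\Dx\phi=\omega u(hv-r).
\end{equation*}
Multiplying \eqref{eqn:y-mom-bal-stab} by $v$ and summing gives the kinetic energy balance
\begin{equation*}
\Dt\K+\Dx\bigl(\tfrac12(hu-q)(u^2+v^2)\bigr)+\Dy\bigl(\tfrac12(hv-r)(u^2+v^2)\bigr)+h(u\Dx\phi+v\Dy\phi)=\omega(vq-ur),
\end{equation*}
where the Coriolis contributions on the right simplify because the $uhv$ and $vhu$ terms cancel, leaving only the $q,r$ pieces.

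For the potential energy I would use $\Dt\I=(gh+gb)\Dt h=\phi\,\Dt h$, substitute the mass balance, and integrate the product by parts at the pointwise level:
\begin{equation*}
\Dt\I=-\Dx\bigl(\phi(hu-q)\bigr)-\Dy\bigl(\phi(hv-r)\bigr)+(hu-q)\Dx\phi+(hv-r)\Dy\phi.
\end{equation*}
Adding this to the kinetic energy balance, the fluxes combine to the stated form $\bigl(\phi+\tfrac{u^2+v^2}{2}\bigr)(hu-q)$ and $\bigl(\phi+\tfrac{u^2+v^2}{2}\bigr)(hv-r)$. The leftover non-conservative contributions reduce, after cancellation of $hu\Dx\phi$ and $hv\Dy\phi$ against the potential-energy correction, to $q\Dx\phi+r\Dy\phi$; combined with the Coriolis residue $\omega(vq-ur)$ already isolated, this gives exactly $-q(\Dx\phi-\omega v)-r(\Dy\phi+\omega u)$ on the right-hand side, which is \eqref{eqn:mod-energ-idt}.

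There is no genuine obstacle here, since the computation is formal and smoothness of $h,u,v,b$ is assumed. The only point requiring care is the bookkeeping that ensures the perturbation terms $q,r$ enter the convective fluxes and the Coriolis forcing in the specific symmetric pattern dictated by \eqref{eqn:stab-rsw}; any asymmetry would prevent the final grouping into the two factors $(\Dx\phi-\omega v)$ and $(\Dy\phi+\omega u)$ that vanish precisely at the geostrophic jets \eqref{eqn:jet-x}--\eqref{eqn:jet-y}, which is the structural reason this form of stabilisation is chosen in the first place.
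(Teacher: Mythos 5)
Your computation is correct and is exactly the ``formal calculation'' the paper alludes to but omits: the kinetic part via $u\cdot$\eqref{eqn:x-mom-bal-stab}$+v\cdot$\eqref{eqn:y-mom-bal-stab} combined with the perturbed mass balance, the potential part via $\Dt\I=\phi\,\Dt h$, and the cancellation of $hu\Dx\phi+hv\Dy\phi$ leaving precisely $-q(\Dx\phi-\omega v)-r(\Dy\phi+\omega u)$. It also mirrors the discrete argument the authors later use for \eqref{eqn:disc-pot-bal}--\eqref{eqn:disc-ke}, so there is nothing to add.
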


Thus, we can observe that if we choose the perturbation terms as $q =
\eta(\Dx\phi - \omega v)$ and $r = \xi(\Dy\phi + \omega u)$, where
$\eta,\xi>0$, we obtain  
\begin{equation*}
    \begin{split}
        \Dt E + \Dx\biggl(\Bigl(\phi + \frac{u^2+v^2}{2}\Bigr)(hu-q)\biggr) + &\Dy\biggl(\Bigl(\phi + \frac{u^2+v^2}{2}\Bigr)(hv-r)\biggr) \\
        &= -\eta(\Dx\phi - \omega v)^2 - \xi(\Dy\phi + \omega u)^2\leq 0.
    \end{split}
\end{equation*}

Therefore, the introduction of these perturbation terms has a
stabilising effect, ensuring the dissipation of the total energy. Moreover, the stabilisation terms vanish under the steady state conditions \eqref{eqn:jet-x}-\eqref{eqn:jet-y}. This observation is
crucial, as it will be used to establish the well-balancing property
of the proposed scheme. 

\subsection{The Finite Volume Method}
\subsubsection{Mesh and differential operators}
\label{subsubsec:mesh}

We consider a tessellation $\T$ of $\Omega$, consisting of rectangles
called primal cells, $K_\ij = \lbrack x_{i-\half},x_{i+\half}\rbrack\times\lbrack
y_{j-\half},y_{j+\half}\rbrack\in\T$ such that $\overline{\Omega} =
\bigcup_{\ij}K_\ij$. The cell centre of $K_\ij$ is denoted by
$(x_i,y_j)$ where $x_i = (x_{i-\half} + x_{i+\half}) / 2$ and $y_j =
(y_{j-\half} + y_{j+\half}) / 2$. For the sake of convenience, we
suppose that the space steps $\delx$ and $\dely$ are constant, i.e.\
we consider a uniform mesh and $x_{i+\half} - x_{i-\half} =
x_{i+1}-x_i = \delx$ and $y_{j+\half} - y_{j-\half} = y_{j+1}-y_j =
\dely$ for each $\ij$. Therefore, it follows that the 2D
Lebesgue measure of $K_\ij$ is $\abs{K_\ij} = \delx\dely$. By
$\sigma_{\ipj} = \lbrace x_{i+\half}\rbrace \times \lbrack
y_{j-\half},y_{j+\half}\rbrack$, we denote the vertical edge common to
the cells $K_\ij$ and $K_{i+1,j}$ and analogously, we define the
horizontal edge $\sigma_\ijp$ as the common edge of the cells $K_\ij$
and $K_{i,j+1}$. We let $\E^{(1)}$ denote the collection of all
vertical edges $\sigma_\ipj$ and analogously define $\E^{(2)}$ to be
the collection of all horizontal edges $\sigma_\ijp$. In addition, we
also set $\E_{ext}$ to be the collection of all external edges lying
on the boundary $\partial\Omega$. We denote the size of the mesh by
$\delta_\T$ and it is defined by  
\[
    \delta_\T = \max_{\ij}\mathrm{diam}(K_\ij).
\]
and we suppose that $\delta_\T\in (0,\delta_0]$ for a sufficiently
small $\delta_0$. For any two numbers $a_1$ and $a_2$, we say that
$a_1\lesssim a_2$ if $a_1\leq c a_2$ for a constant $c$ which is
independent of the mesh parameters. 

\begin{figure}[htpb]
    \centering
        \begin{tikzpicture}[scale = 0.7]
            \fill [green!20!white] (2.5,0) rectangle (5,4);
            \fill [red!20!white] (5,0) rectangle (7.5,4);
            \draw[black, thick] (0,0) rectangle (10,4);
            \draw[blue, thick] (5,0) -- (5,4);
            \draw[green, thick] (2.5, 0) -- (5,0);
            \draw[green, thick] (2.5, 4) -- (5,4);
            \draw[red, thick] (5,0) -- (7.5,0);
            \draw[red, thick] (5,4) -- (7.5,4);
            \draw[densely dotted, green, thick] (2.5,0) -- (2.5,4);
            \draw[densely dotted, red, thick] (7.5,0) -- (7.5,4);
            \draw[<->, black] (2.5, 4.4) -- (7.5,4.4);
            \node at (5,4.8) {$D_{\ipj}$};
            \filldraw [black] (0,2) circle (2pt) node[anchor = north east]{{\small{$(x_{i-\half}, y_j)$}}};
            \filldraw [black] (2.5,2) circle (2pt) node[anchor = north east]{{\small{$(x_{i}, y_j)$}}};
            \filldraw [black] (5,2) circle (2pt) node[anchor = north east]{{\small{$(x_{i+\half}, y_j)$}}};
            \filldraw [black] (7.5,2) circle (2pt) node[anchor = north west]{{\small{$(x_{i+1}, y_j)$}}};
            \filldraw [black] (10,2) circle (2pt) node[anchor = north west]{{\small{$(x_{i+\frac{3}{2}}, y_j)$}}};
            \draw (1.25,2) node[anchor = south]{$K_\ij$};
            \draw (8.75,2) node[anchor = south]{$K_{i+1,j}$};
            \draw (5,0) node[anchor = north]{{\color{blue}$\sigma_\ipj$}};
        \end{tikzpicture}
    \caption{Primal and dual cells}
    \label{fig:space-disc}
\end{figure}
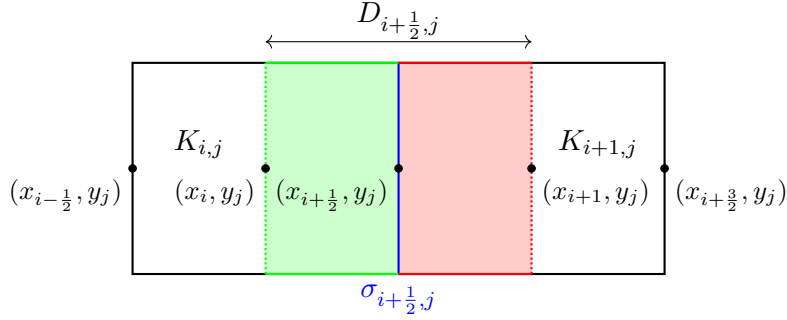

To each edge $\sigma_\ipj$, we associate a horizontal dual cell
$D_\ipj = \lbrack x_i, x_{i+1}\rbrack\times\lbrack y_{j-\half},
y_{j+\half}\rbrack$ and analogously, define a vertical dual cell
$D_\ijp$ associated to the edge $\sigma_\ijp$. By, $\D^{x}$ (resp.\
$\D^{y}$), we denote the collection of all horizontal (resp.\
vertical) dual cells. The space of all scalar valued piecewise
constant functions on the horizontal (resp.\ vertical) dual cells is
denoted by $\mathcal{H}_\E^{x}(\Omega)$ (resp.\
$\mathcal{H}^{y}_\E(\Omega)$). In addition, the set
$\mathcal{H}^{x}_{\E, 0}(\Omega) \subset \mathcal{H}^{x}_\E(\Omega)$
denotes the space of all piecewise constant functions that vanish on
the boundary and analogously, we define
$\mathcal{H}^y_{\E,0}(\Omega)$. 

Let $\Lt(\Omega)$ denote the space of all piecewise constant functions that are constant on each cell $K_\ij$. 
For any $z\in\Lt(\Omega)$, $z_\ij$ will denote its constant value on
$K_\ij\in\T$. We define the standard projection operator $\Pi_\T\colon
L^1(\Omega)\to\Lt(\Omega)$ defined as  
\begin{equation}
\label{eqn:proj-op}
    (\Pi_\T f)_\ij = \frac{1}{\delx\dely}\int_{K_\ij}f(x,y)\,\dx\,\dy.
\end{equation}
For any $z\in\Lt(\Omega)$, its average value and jump across an interface $\sigma_\ipj$ is denoted by 
\[
    \ldblbrace z\rdblbrace_\ipj = \frac{z_\ij + z_{i+1,j}}{2}, \quad \llbracket z\rrbracket_\ipj = z_{i+1,j} - z_{i,j},
\]
and we analogously define $\ldblbrace z\rdblbrace_\ijp$ and $\llbracket z\rrbracket_{\ijp}$.

We introduce the discrete variants of the space derivatives
$\derx,\dery\colon\Lt(\Omega)\to\Lt(\Omega)$, defined as  
\begin{equation}
    \derx z_\ij = \frac{\ldblbrace z\rdblbrace_\ipj - \ldblbrace z\rdblbrace_\imj}{\delx}, \quad \dery z_\ij = \frac{\ldblbrace z\rdblbrace_\ijp - \ldblbrace z\rdblbrace_\ijm}{\dely}. 
\end{equation}

We also define a discrete analogue of the derivative on the dual grids
for the purposes of consistency. The discrete derivative in the
$x$-direction is defined as the map $\eth^\E_x \colon \Lt(\Omega)\to
\mathcal{H}^{x}_{\E, 0}(\Omega)$ while $\eth^\E_y\colon \Lt(\Omega)\to
\mathcal{H}^{y}_{\E,0}(\Omega)$ denotes the $y$-derivative, where 
\begin{equation}
\label{eqn:dual-x-der}
    \eth^\E_x z_{\ipj} = \frac{z_{i+1, j} - z_\ij}{\delx},\quad   \eth^\E_y z_{\ijp} = \frac{z_{i,j+1} - z_\ij}{\dely}. 
\end{equation}

For any $w,z\in\Lt(\Omega)$, the following duality relations hold:
\begin{equation}
\label{eqn:dual-rel}
    \sum_\ij\delx\dely\bigl(w_\ij\derx z_\ij + z_\ij\derx w_\ij\bigr) = \sum_\ij\delx\dely\bigl(w_\ij\dery z_\ij + z_\ij\dery w_\ij\bigr) = 0.
\end{equation}

\subsubsection{Reconstruction Operator}
\label{subsubsec:recon-op}

For the purpose of consistency, we define reconstruction operators
which transform piecewise constant functions on the grid $\T$ to
functions on the grids $\D^x$ or $\D^y$. Namely, we define the
operator
$\mathcal{R}^{x}_\T\colon\Lt(\Omega)\to\mathcal{H}^{x}_\E(\Omega)$ as
follows:  
\begin{align}
    &(\mathcal{R}^{x}_\T z)_\ipj = 
    \begin{dcases}
        \mu_\ipj z_\ij + (1 - \mu_\ipj) z_{i+1, j}, &\text{if }\sigma_\ipj\in\E_{int} \\
        z_\ij, &\text{if }\sigma_\ipj\in\E_{ext},
    \end{dcases}
\end{align}
where $\mu_\ipj\in\lbrack 0,1\rbrack$. Similarly, we define the
operator
$\mathcal{R}^y_\T\colon\Lt(\Omega)\to\mathcal{H}^y_\E(\Omega)$. The
following stability estimates can be easily obtained; see
\cite{HLZ20}: 
\begin{equation}
\label{eqn:recon-stab}
    \lVert\mathcal{R}^x_\T z\rVert_{L^p}\lesssim\lVert z\rVert_{L^p}, \quad \lVert\mathcal{R}^y_\T z\rVert_{L^p}\lesssim\lVert z\rVert_{L^p},
\end{equation}
for any $1\leq p <\infty$.

From \cite{AA24}, we also recall the following estimates. For any
$1\leq p <\infty$, we have  
\begin{align}
    &\lVert\mathcal{R}^{x}_\T z - z\rVert_{L^p}\lesssim
      \delx\lVert\eth^\E_x z\rVert_{L^p},\quad \lVert\mathcal{R}^{y}_\T
      z - z\rVert_{L^p}\lesssim \dely\lVert\eth^\E_y z\rVert_{L^p},
\end{align}
for any $z\in\Lt(\Omega)$.

We now state the following lemma on the weak convergence of these mesh
reconstructions; see \cite{AA24} for a proof. 

\begin{lemma}
\label{lem:mesh-recon-conv}
Let $(\T^{\k})_{k\in\N}$ be a sequence of space
discretisations such that $\delta_{\T^\k}\to 0$ as
$k\to\infty$. Let $(z^\k)_{k\in\N}$ be a uniformly bounded sequence in
$L^p(\Omega)$ such that $z^\k\in\mathcal{L}_{\T^\k}(\Omega)$ for
each $k\in\N$. Let $\mathcal{R}^{x}_k, \mathcal{R}^{y}_k$ denote
the reconstruction operators on $\T^\k$. Then  
\begin{align*}
  \lVert \mathcal{R}^{i}_k z^\k - z^\k\rVert_{L^p}\to 0\text{ as }k\to\infty.
\end{align*}
for $i = x,y$.
    In particular, if $z^\k\rightharpoonup z$ in $L^p(\Omega)$, then 
    \[
        \mathcal{R}^{i}_k z^\k\rightharpoonup z \text{ in }L^p(\Omega).
    \]
\end{lemma}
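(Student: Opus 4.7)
The plan is to combine the approximation estimate
\[
\|\mathcal{R}^{i}_\T z - z\|_{L^p} \lesssim \delta_{\T}\|\eth^\E_i z\|_{L^p}, \quad i = x, y,
\]
recalled just above the lemma, with the $L^p$-stability of the reconstruction operator $\|\mathcal{R}^i_\T z\|_{L^p} \lesssim \|z\|_{L^p}$ and a density/approximation argument on $L^p(\Omega)$.

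\textbf{Step 1 (smooth case).} Fix $\varphi \in C^\infty(\overline{\Omega})$. The discrete derivatives $\eth^\E_x\Pi_{\T^{(k)}}\varphi$ and $\eth^\E_y\Pi_{\T^{(k)}}\varphi$ are finite differences of cell-averages of $\varphi$, hence bounded pointwise by $\|\nabla\varphi\|_{L^\infty}$ uniformly in $k$. The approximation estimate then yields
\[
\|\mathcal{R}^i_k \Pi_{\T^{(k)}}\varphi - \Pi_{\T^{(k)}}\varphi\|_{L^p} \lesssim \delta_{\T^{(k)}}\|\nabla\varphi\|_{L^p} \longrightarrow 0 \text{ as } k \to \infty.
\]

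\textbf{Step 2 (general case).} For an arbitrary uniformly $L^p$-bounded sequence $(z^{(k)})$ and $\eta > 0$, select $\varphi_\eta \in C^\infty(\overline{\Omega})$ that approximates $z^{(k)}$ in $L^p$ within $\eta$ uniformly in $k$, and decompose
\[
\mathcal{R}^i_k z^{(k)} - z^{(k)} = \mathcal{R}^i_k\bigl(z^{(k)} - \Pi_{\T^{(k)}}\varphi_\eta\bigr) + \bigl(\mathcal{R}^i_k \Pi_{\T^{(k)}}\varphi_\eta - \Pi_{\T^{(k)}}\varphi_\eta\bigr) + \bigl(\Pi_{\T^{(k)}}\varphi_\eta - z^{(k)}\bigr).
\]
The reconstruction stability bounds the first bracket by a constant multiple of $\|z^{(k)} - \Pi_{\T^{(k)}}\varphi_\eta\|_{L^p}$, the middle bracket tends to zero by Step 1, and the third is controlled by the choice of $\varphi_\eta$. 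Sending $k \to \infty$ and then $\eta \to 0$ produces the strong-convergence claim. The weak-convergence corollary is then immediate: if $z^{(k)} \rightharpoonup z$ in $L^p$, the identity $\mathcal{R}^i_k z^{(k)} = z^{(k)} + (\mathcal{R}^i_k z^{(k)} - z^{(k)})$ combined with the strong convergence of the parenthesised term to zero gives $\mathcal{R}^i_k z^{(k)} \rightharpoonup z$ in $L^p(\Omega)$.

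The principal obstacle is the uniform smallness of $\|z^{(k)} - \Pi_{\T^{(k)}}\varphi_\eta\|_{L^p}$ in Step 2, since mere weak-$L^p$ compactness does not allow an oscillatory bounded sequence to be approximated by a single smooth function. In the intended application, the sequences $(z^{(k)})$ inherit a uniform discrete Sobolev bound $\sup_k \|\eth^\E_i z^{(k)}\|_{L^p} < \infty$ from the a priori energy estimates of the scheme; under this bound, the approximation estimate applied directly to $z^{(k)}$ gives $\|\mathcal{R}^i_k z^{(k)} - z^{(k)}\|_{L^p} \lesssim \delta_{\T^{(k)}} \to 0$, which bypasses the density step entirely and is the cleanest route to the lemma.
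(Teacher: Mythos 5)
You have correctly identified where your own argument breaks, and the honest answer is that the break cannot be repaired: the first assertion of the lemma is false for sequences that are merely uniformly bounded in $L^p$, so no density argument can close your Step~2 without an additional hypothesis. Concretely, take the checkerboard data $z^{(k)}_{i,j}=(-1)^{i}$ on $\T^{(k)}$. This sequence is bounded in every $L^p(\Omega)$, but on the half of each dual cell $D_\ipj$ contained in $K_\ij$ the difference $\mathcal{R}^{x}_k z^{(k)}-z^{(k)}$ equals $(1-\mu_\ipj)(z^{(k)}_{i+1,j}-z^{(k)}_{i,j})$, of modulus $2(1-\mu_\ipj)$, while on the other half it has modulus $2\mu_\ipj$; since $\max\lbrace\mu_\ipj,1-\mu_\ipj\rbrace\geq\half$, one gets $\lVert\mathcal{R}^{x}_k z^{(k)}-z^{(k)}\rVert_{L^p}\geq(\abs{\Omega}/2)^{1/p}$ for every $k$. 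This is exactly the grid-scale oscillation that, as you note, no single $\varphi_\eta$ can capture uniformly in $k$. The statement therefore needs the extra control you describe in your closing paragraph, and your one-line route is then the correct proof: $\lVert\mathcal{R}^{x}_k z^{(k)}-z^{(k)}\rVert_{L^p}\lesssim\delx\,\lVert\eth^\E_x z^{(k)}\rVert_{L^p}$, which tends to zero whenever $\lVert\eth^\E_x z^{(k)}\rVert_{L^p}=o(\delx^{-1})$. Note that this is weaker than the uniform discrete Sobolev bound you propose, and it is precisely the form in which the lemma is consumed later in the paper, where assumption \eqref{eqn:wk-bv-assmp} yields $\delx\lVert\eth^\E_x h^{n+1}_\T\rVert_{L^2}\lesssim\delx^{1/2}$. (The paper gives no proof of the lemma, deferring to \cite{AA24}, so the missing hypothesis must be understood as implicit.)

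A separate point: the ``in particular'' conclusion, namely that $\mathcal{R}^{i}_k z^{(k)}$ inherits the weak limit of $z^{(k)}$, is in fact true for merely bounded sequences, but it cannot be obtained the way you (and the lemma as written) obtain it, i.e.\ from the strong convergence of the difference, since that convergence fails in general. It requires the standard duality argument: pair $\mathcal{R}^{x}_k z^{(k)}-z^{(k)}$ with a smooth test function, re-index the resulting sum so that the convex averaging with weights $\mu_\ipj$ and $1-\mu_\ipj$ acts on the cell values of the test function rather than on $z^{(k)}$, and use the $C^1$-regularity of the test function to bound the pairing by $O(\delta_{\T^{(k)}})\lVert z^{(k)}\rVert_{L^1}$. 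Under the discrete-gradient hypothesis your deduction of weak convergence from strong convergence is of course valid, and in that case your Step~1 becomes superfluous.
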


\subsubsection{The Scheme}
\label{subsubsec:scheme}

Let $0 = t^0 < t^1 < \cdots < t^N = T$ be a discretisation of
$\lbrack0,T\rbrack$ with $\delt = t^{n+1}-t^{n}$ being the constant
time-step. We initialise the scheme by setting  
\[
  h^0_\ij = (\Pi_\T h_0)_\ij,\quad u^0_\ij = (\Pi_\T u_0)_\ij,\quad v^0_\ij = (\Pi_\T v_0)_\ij.
\]

For each $0\leq n\leq N-1$, we consider the following semi-implicit in
time, cell-centred finite volume scheme: 
\begin{subequations}
\label{eqn:scheme}
    \begin{gather}
        \frac{1}{\delt}(h^{n+1}_\ij - h^n_\ij) + \frac{1}{\delx}(\F^{n+1}_\ipj - \F^{n+1}_\imj) + \frac{1}{\dely}(\G^{n+1}_\ijp - \G^{n+1}_\ijm) = 0, \label{eqn:disc-mss-bal} \\
        \frac{1}{\delt}(h^{n+1}_\ij u^{n+1}_\ij - h^n_\ij u^n_\ij) + \frac{1}{\delx}(\F^{n+1}_\ipj u^n_\ipj - \F^{n+1}_\imj u^n_\imj) + \frac{1}{\dely}(\G^{n+1}_\ijp u^n_\ijp \label{eqn:disc-x-mom-bal} \\
        - \G^{n+1}_\ijm u^n_\ijm) 
        + h^{n+1}_\ij\derx\phi^{n+1}_\ij = \omega h^{n+1}_\ij v^n_\ij - \omega r^{n+1}_\ij, \nonumber \\
        \frac{1}{\delt}(h^{n+1}_\ij v^{n+1}_\ij - h^n_\ij v^n_\ij) + \frac{1}{\delx}(\F^{n+1}_\ipj v^n_\ipj - \F^{n+1}_\imj v^n_\imj) + \frac{1}{\dely}(\G^{n+1}_\ijp v^n_\ijp \label{eqn:disc-y-mom-bal} \\
        - \G^{n+1}_\ijm v^n_\ijm) + h^{n+1}_\ij\dery\phi^{n+1}_\ij = -\omega h^{n+1}_\ij u^n_\ij + \omega q^{n+1}_\ij. \nonumber 
    \end{gather}
\end{subequations}
Here, $\phi^{n+1}_\ij = g(h^{n+1}_\ij + b_\ij)$, where $b_\ij =
(\Pi_\T b)_\ij$, and $q^{n+1}_\ij$ and $r^{n+1}_\ij$ are the
stabilisation terms, cf.\ Subsection~\ref{subsec:stab}, whose choices
will be specified after performing the stability analysis. The convective
fluxes in \eqref{eqn:disc-mss-bal} are given by 
\begin{equation}
\label{eqn:mss-flux}
    \begin{split}
    &\F^{n+1}_\ipj = \ldblbrace h^{n+1}u^n\rdblbrace_\ipj - \ldblbrace q^{n+1}\rdblbrace_\ipj ,\quad \G^{n+1}_\ijp = \ldblbrace h^{n+1}v^n\rdblbrace_\ijp - \ldblbrace r^{n+1}\rdblbrace_\ijp.
    \end{split}
\end{equation}

Further, one can rewrite the height update \eqref{eqn:disc-mss-bal} as 
\begin{equation}
    \begin{split}
    \label{eqn:disc-mss-bal-2}
        \frac{1}{\delt}(h^{n+1}_\ij - h^n_\ij) + \derx(h^{n+1}u^n)_\ij + \dery(h^{n+1}v^n)_\ij - \derx q^{n+1}_\ij - \dery r^{n+1}_\ij = 0.
    \end{split}
\end{equation}

The edge centered velocities appearing in the momentum balances
\eqref{eqn:disc-x-mom-bal}-\eqref{eqn:disc-y-mom-bal} are 
upwind, namely  
\begin{equation}
    w^n_\ipj = 
    \begin{dcases}
        w^n_\ij, &\text{if }\F^{n+1}_\ipj\geq 0, \\
        w^n_{i+1, j}, &\text{otherwise},
    \end{dcases}
    \quad
    w^n_\ijp =
    \begin{dcases}
      w^n_\ij, &\text{if }\G^{n+1}_\ijp\geq 0; \\
      w^n_{i, j+1}, &\text{otherwise},
    \end{dcases}
\end{equation}
for $w = u,v$. 

Now, from the momentum balance \eqref{eqn:disc-x-mom-bal} and the mass
balance \eqref{eqn:disc-mss-bal}, we can obtain the following update
of the $x$-velocity: 
\begin{equation}
\label{eqn:disc-x-vel}
    \begin{split}
        \frac{h^{n+1}_\ij}{\delt}(u^{n+1}_\ij - u^n_\ij) + &\frac{1}{\delx}\Bigl(\F^{n+1,-}_\ipj\llbracket u^n\rrbracket_\ipj + \F^{n+1,+}_\imj\llbracket u^n\rrbracket_\imj\Bigr) + \frac{1}{\dely}\Bigl(\G^{n+1,-}_\ijp\llbracket u^n\rrbracket_\ijp \\
        &+\G^{n+1,+}_\ijm\llbracket u^n\rrbracket_\ijm\Bigr)+ h^{n+1}_\ij\derx\phi^{n+1}_\ij = \omega h^{n+1}_\ij v^n_\ij - \omega r^{n+1}_\ij,
    \end{split}
\end{equation}
where $a^{\pm} = (a\pm\abs{a})/2$ so that $a = a^+ + a^-$ with
$a^+\geq 0$ and $a^-\leq 0$. Similarly, we have the $y$-velocity
update: 
\begin{equation}
\label{eqn:disc-y-vel}
     \begin{split}
        \frac{h^{n+1}_\ij}{\delt}(v^{n+1}_\ij - v^n_\ij) + &\frac{1}{\delx}\Bigl(\F^{n+1,-}_\ipj\llbracket v^n\rrbracket_\ipj + \F^{n+1,+}_\imj\llbracket v^n\rrbracket_\imj\Bigr) + \frac{1}{\dely}\Bigl(\G^{n+1,-}_\ijp\llbracket v^n\rrbracket_\ijp \\
        &+\G^{n+1,+}_\ijm\llbracket v^n\rrbracket_\ijm\Bigr)+ h^{n+1}_\ij\dery\phi^{n+1}_\ij = -\omega h^{n+1}_\ij u^n_\ij + \omega q^{n+1}_\ij.
    \end{split}
\end{equation}

Since the pressure forces appear in non-conservative form in
\eqref{eqn:disc-x-mom-bal}–\eqref{eqn:disc-y-mom-bal}, we state the
following proposition, which establishes the discrete balance of the total
momentum. 

\begin{proposition}[Discrete Balance of the Total Momentum]
  The discrete total momentum satisfies the following balance equations. 
  \begin{equation}
    \label{eqn:tot-x-mom-bal}
    \begin{split}
      \sum_\ij\delx\dely\,h^{n+1}_\ij u^{n+1}_\ij = &\sum_{\ij}\delx\dely\, h^n_\ij u^n_\ij - \delt\sum_\ij\delx\dely\,gh^{n+1}_\ij\derx b_\ij \\
      &+\delt\sum_\ij\delx\dely(\omega h^{n+1}_\ij v^n_\ij - \omega r^{n+1}_\ij).
    \end{split}
  \end{equation}
  \begin{equation}
    \label{eqn:tot-y-mom-bal}
    \begin{split}
      \sum_\ij\delx\dely\,h^{n+1}_\ij v^{n+1}_\ij = &\sum_{\ij}\delx\dely\, h^n_\ij v^n_\ij - \delt\sum_\ij\delx\dely\,gh^{n+1}_\ij\dery b_\ij \\
      &+\delt\sum_\ij\delx\dely(-\omega h^{n+1}_\ij u^n_\ij + \omega q^{n+1}_\ij).
    \end{split}
  \end{equation}
\end{proposition}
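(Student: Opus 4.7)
The plan is to sum each discrete momentum balance over all primal cells, multiply through by $\delx\dely\delt$, and show that all terms except the claimed ones either telescope to zero (under periodic boundary conditions) or reduce to the stated right-hand sides via the duality relation \eqref{eqn:dual-rel}. I will carry this out for \eqref{eqn:tot-x-mom-bal}; the argument for \eqref{eqn:tot-y-mom-bal} is completely analogous.

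First, I multiply \eqref{eqn:disc-x-mom-bal} by $\delx\dely\delt$ and sum over $(i,j)$. The discrete time-derivative term collapses immediately into the difference $\sum_\ij \delx\dely\bigl(h^{n+1}_\ij u^{n+1}_\ij - h^n_\ij u^n_\ij\bigr)$, while the Coriolis and stabilisation contributions assemble directly into $\delt\sum_\ij\delx\dely(\omega h^{n+1}_\ij v^n_\ij - \omega r^{n+1}_\ij)$. Next, I address the convective flux terms: summing $\F^{n+1}_\ipj u^n_\ipj - \F^{n+1}_\imj u^n_\imj$ along $i$ produces a telescoping sum, and by the periodic boundary conditions (identifying $\Omega$ with $\mathbb{T}^2$) this vanishes; the same argument applied to the $\G$-fluxes in the $j$-direction disposes of the $y$-flux contribution.

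The main step is the non-conservative pressure term $\sum_\ij \delx\dely\, h^{n+1}_\ij \derx \phi^{n+1}_\ij$. Using $\phi^{n+1}_\ij = g(h^{n+1}_\ij + b_\ij)$, I split this as
\begin{equation*}
\sum_\ij \delx\dely\, h^{n+1}_\ij \derx \phi^{n+1}_\ij = g\sum_\ij \delx\dely\, h^{n+1}_\ij \derx h^{n+1}_\ij + g\sum_\ij \delx\dely\, h^{n+1}_\ij \derx b_\ij.
\end{equation*}
The first sum on the right is exactly of the form $\sum_\ij \delx\dely\, w_\ij \derx z_\ij$ with $w = z = h^{n+1}$. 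Applying the discrete duality relation \eqref{eqn:dual-rel} with this choice yields $2\sum_\ij \delx\dely\, h^{n+1}_\ij \derx h^{n+1}_\ij = 0$, so the self-pressure contribution vanishes. The bottom-topography contribution $-\delt\sum_\ij\delx\dely\, gh^{n+1}_\ij\derx b_\ij$ is then moved to the right-hand side with the correct sign, producing precisely \eqref{eqn:tot-x-mom-bal}.

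The principal obstacle here is the non-conservative treatment of the pressure gradient: unlike the convective fluxes, it does not vanish by telescoping, and the result hinges on the symmetry of the discrete gradient/divergence pairing encoded in \eqref{eqn:dual-rel}. Once this cancellation is invoked, the rest of the argument is bookkeeping. For \eqref{eqn:tot-y-mom-bal}, I would repeat the same steps, replacing $\derx$ by $\dery$, $u$ by $v$, and adjusting the Coriolis and stabilisation terms according to \eqref{eqn:disc-y-mom-bal}.
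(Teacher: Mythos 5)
Your proof is correct and follows essentially the same route as the paper: both arguments sum the discrete momentum balances, let the convective fluxes telescope under periodicity, and reduce everything to the cancellation $\sum_\ij\delx\dely\,h^{n+1}_\ij\derx h^{n+1}_\ij=0$ before moving the topography term to the right-hand side. The only cosmetic difference is that you obtain this cancellation by taking $w=z=h^{n+1}$ in the duality relation \eqref{eqn:dual-rel}, whereas the paper derives it from the pointwise identity $z_\ij\derx z_\ij=\derx\bigl(\tfrac{z^2}{2}\bigr)_\ij-\bigl(\llbracket z\rrbracket_\ipj^2-\llbracket z\rrbracket_\imj^2\bigr)/(4\delx)$; the two justifications are equivalent.
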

\begin{proof}
    For any piecewise constant function $z\in\Lt(\Omega)$, observe that 
    \begin{equation}
        z_\ij\derx z_\ij =  \derx\Bigl(\frac{z^2}{2}\Bigr)_\ij - \frac{(\llbracket z\rrbracket_{\ipj}^2 - \llbracket z\rrbracket_{\imj}^2)}{4\delx}, \quad  z_\ij\dery z_\ij = \dery\Bigl(\frac{z^2}{2}\Bigr)_\ij - \frac{(\llbracket z\rrbracket_{\ijp}^2 - \llbracket z\rrbracket_{\ijm}^2)}{4\dely}.
    \end{equation}
    Consequently, 
    \[
        \sum_{\ij}\delx\dely\,z_\ij\derx z_\ij = \sum_\ij\delx\dely\,z_\ij\dery z_\ij = 0.
    \]
    The relations \eqref{eqn:tot-x-mom-bal}-\eqref{eqn:tot-y-mom-bal} then follow upon substituting for $\phi^{n+1}_\ij$ in the equations \eqref{eqn:disc-x-mom-bal}-\eqref{eqn:disc-y-mom-bal} and using the above identity.
\end{proof}

\subsection{Main Results and Organization}

In this section, we present the main results of this work concerning the
energy stability, well-balancing, and consistency of the scheme. We
begin with the main theorem, which establishes the stability of the
numerical solutions generated by the proposed scheme. 

\begin{theorem}[Energy Stability]
  \label{thm:eng-stab}
  Suppose the stabilisation terms are chosen as
    \begin{equation}
    \label{eqn:stab-term-exp}
        q^{n+1}_\ij = \eta\delt(\derx\phi^{n+1}_\ij - \omega v^n_\ij),\quad r^{n+1}_\ij = \eta\delt(\dery\phi^{n+1}_\ij +\omega u^n_\ij).
    \end{equation}
    Further, assume that the following conditions hold:
    \begin{enumerate}
        \item $\eta > \dfrac{3}{2} h^{n+1}_\ij$;
        \item $\displaystyle 1 + \frac{3\delt}{h^{n+1}_\ij}\biggl(\frac{1}{\delx}\Bigl(\F^{n+1,-}_\ipj - \F^{n+1,+}_\imj\Bigr) + \frac{1}{\dely}\Bigl(\G^{n+1,-}_\ijp - \G^{n+1,+}_\ijm\Bigr)\biggr)\geq 0$;
        \item  $\displaystyle\delt^2\leq\zeta\biggl(\frac{2h^{n+1}_\ij}{3\omega^2\eta^2}\Bigl(\eta - \frac{3}{2}h^{n+1}_\ij\Bigr)\biggr)$, $\zeta\in(0,1)$.
    \end{enumerate}
    Then, for any $0\leq n\leq N-1$, the following local in-time energy inequality holds:
    \begin{equation}
    \label{eqn:loc-eng-ineq}
        \sum_{\ij}\delx\dely E^{n+1}_\ij\leq \sum_{\ij}\delx\dely E^n_\ij.
    \end{equation}
    Furthermore, we can also establish the following global energy estimate for $1\leq n\leq N$:
    \begin{gather}
    \label{eqn:glob-ent-ineq}
        \sum_{\ij}\delx\dely E^n_\ij + \frac{g}{2}\sum_{k=0}^{n-1}\sum_\ij\delx\dely(h^{k+1}_\ij - h^k_\ij)^2 \\ 
        +\sum_{k=0}^{n-1}\sum_{\ij}\delx\dely\frac{(1-\zeta)}{\eta^2}\Bigl(\eta - \frac{3}{2}h^{k+1}_\ij\Bigr)\bigl((q^{k+1}_\ij)^2 + (r^{k+1}_\ij)^2\bigr) \leq\sum_{\ij}\delx\dely E^0_\ij. \nonumber
    \end{gather}
\end{theorem}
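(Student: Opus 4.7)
The plan is to obtain a discrete counterpart of the formal identity \eqref{eqn:mod-energ-idt} by separately deriving balances for the discrete kinetic energy and potential energy, combining them, and then exploiting the specific choice of stabilisation terms to produce a definite sign.

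First, I would derive the discrete kinetic energy balance. Multiplying the velocity update \eqref{eqn:disc-x-vel} by $\delt\,u^{n+1}_\ij$ and using the algebraic identity $a(a-b) = \frac{1}{2}(a^2 - b^2) + \frac{1}{2}(a-b)^2$ gives
\begin{equation*}
  \tfrac{h^{n+1}_\ij}{2}\bigl((u^{n+1}_\ij)^2 - (u^n_\ij)^2\bigr) + \tfrac{h^{n+1}_\ij}{2}(u^{n+1}_\ij - u^n_\ij)^2 + \text{(upwind convective terms)} + \delt h^{n+1}_\ij u^{n+1}_\ij\derx\phi^{n+1}_\ij = \delt\,u^{n+1}_\ij(\omega h^{n+1}_\ij v^n_\ij - \omega r^{n+1}_\ij).
\end{equation*}
Writing $u^{n+1}_\ij = u^n_\ij + (u^{n+1}_\ij - u^n_\ij)$ in the upwind terms, summing over $(i,j)$, and combining with the discrete mass balance \eqref{eqn:disc-mss-bal} rearranges the convective flux contributions into the telescopic form $\frac{h^{n+1}(u^n)^2}{2} - \frac{h^n(u^n)^2}{2}$ plus a non-negative quadratic jump term, provided the coefficient expression in condition (2) is non-negative; this is exactly the discrete Lax–Wendroff style CFL condition. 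The analogous calculation for the $y$-velocity yields a symmetric balance for the $v$-component.

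Second, I would obtain the discrete potential energy balance by multiplying the mass balance \eqref{eqn:disc-mss-bal-2} by $\delt\,\phi^{n+1}_\ij = \delt\,g(h^{n+1}_\ij + b_\ij)$. Using the identity $(h^{n+1} - h^n)h^{n+1} = \tfrac{1}{2}((h^{n+1})^2 - (h^n)^2) + \tfrac{1}{2}(h^{n+1} - h^n)^2$, the pointwise contribution telescopes into $\frac{g}{2}((h^{n+1}_\ij)^2 - (h^n_\ij)^2) + gb_\ij(h^{n+1}_\ij - h^n_\ij)$ plus the non-negative remainder $\frac{g}{2}(h^{n+1}_\ij - h^n_\ij)^2$ appearing in \eqref{eqn:glob-ent-ineq}. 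The discrete pressure-work terms $\delt h^{n+1}_\ij u^{n+1}_\ij\derx\phi^{n+1}_\ij$ and $\delt h^{n+1}_\ij v^{n+1}_\ij\dery\phi^{n+1}_\ij$ from the kinetic energy balance cancel against the contributions obtained from the $-\derx q^{n+1} - \dery r^{n+1}$ stabilisations through the summation-by-parts duality \eqref{eqn:dual-rel}, up to boundary-free remainders $-q^{n+1}_\ij\derx\phi^{n+1}_\ij$ and $-r^{n+1}_\ij\dery\phi^{n+1}_\ij$.

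Third, I would combine the three balances and insert the explicit form \eqref{eqn:stab-term-exp}. The Coriolis sources, being evaluated at the old velocities, do not cancel by a pure symmetry $\omega(huv - hvu) = 0$; instead, after summation they leave the mismatch contributions $\omega h^{n+1}_\ij\bigl(v^n_\ij(u^{n+1}_\ij - u^n_\ij) - u^n_\ij(v^{n+1}_\ij - v^n_\ij)\bigr)$ together with $-\omega r^{n+1}_\ij u^{n+1}_\ij + \omega q^{n+1}_\ij v^{n+1}_\ij$. The stabilisation pieces $-q^{n+1}_\ij\derx\phi^{n+1}_\ij + \omega q^{n+1}_\ij v^n_\ij$ and their $r^{n+1}$ counterpart collapse algebraically, by the choice \eqref{eqn:stab-term-exp}, into $-\frac{(q^{n+1}_\ij)^2 + (r^{n+1}_\ij)^2}{\eta\delt}$, providing the only available dissipation. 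I would then apply Young's inequality to absorb the Coriolis mismatch and the kinetic-energy remainder $\frac{h^{n+1}_\ij}{2\delt}\bigl((u^{n+1}_\ij-u^n_\ij)^2 + (v^{n+1}_\ij-v^n_\ij)^2\bigr)$ into this dissipation. This is where conditions (1) and (3) enter: condition (1) ensures that after splitting the kinetic-energy remainder through the $x$- and $y$-velocity updates (which contribute an extra factor of $\tfrac{3}{2}h^{n+1}$ via a threefold Young split handling the three distinct right-hand side contributions), the net coefficient $(\eta - \tfrac{3}{2}h^{n+1})/\eta^2$ is strictly positive, while condition (3) quantifies by how much $\delt$ must be restricted so that the Coriolis cross terms $\omega^2(u^n)^2, \omega^2(v^n)^2$ generated by Young's inequality are controlled by this positive coefficient, leaving a residual factor $(1-\zeta)/\eta^2\cdot(\eta - \tfrac{3}{2}h^{n+1})$ as recorded in \eqref{eqn:glob-ent-ineq}.

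The hard part will be the bookkeeping in this final absorption step: several terms of comparable size ($\delt^2\omega^2$, $\delt^2|\nabla\phi|^2$, $h^{n+1}|u^{n+1}-u^n|^2$) must be split with weights compatible with \emph{each other} rather than individually optimised, which is exactly why condition (3) has the peculiar form $\frac{2h^{n+1}}{3\omega^2\eta^2}(\eta - \tfrac{3}{2}h^{n+1})$ with the factor $3$ reflecting a threefold split and the factor $2$ reflecting a standard $ab\le \tfrac{1}{2}(\varepsilon a^2 + \varepsilon^{-1}b^2)$ application. Once the local inequality \eqref{eqn:loc-eng-ineq} is established, the global estimate \eqref{eqn:glob-ent-ineq} follows immediately by summing over $k=0,\dots,n-1$ and retaining the accumulated non-negative dissipative contributions rather than discarding them.
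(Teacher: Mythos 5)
Your overall architecture matches the paper's: derive discrete kinetic and potential energy balances, combine them via the grad--div duality \eqref{eqn:dual-rel} so that the stabilisation choice \eqref{eqn:stab-term-exp} produces the dissipation $-\frac{(q^{n+1}_\ij)^2+(r^{n+1}_\ij)^2}{\eta\delt}$, use condition (2) to dispose of the upwind convective remainder, and use conditions (1) and (3) to absorb the rest. You also correctly locate the origin of the factor $3$ (a three-term splitting of the velocity update) and of the $\frac{g}{2}(h^{n+1}-h^n)^2$ term.

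However, there is a genuine gap in your first step: you test the velocity updates with the \emph{new} velocities $u^{n+1}_\ij,\,v^{n+1}_\ij$, whereas the scheme is built to be tested with the \emph{old} ones. This matters for three reasons. First, the Coriolis sources in \eqref{eqn:disc-x-vel}--\eqref{eqn:disc-y-vel} are $\omega h^{n+1}_\ij v^n_\ij$ and $-\omega h^{n+1}_\ij u^n_\ij$; multiplying by $u^n_\ij$ and $v^n_\ij$ makes the $\omega h^{n+1}u^nv^n$ contributions cancel \emph{exactly}, leaving only $\omega v^n_\ij q^{n+1}_\ij-\omega u^n_\ij r^{n+1}_\ij$, which is precisely what recombines with $-q\derx\phi-r\dery\phi$ into the quadratic dissipation. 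Your mismatch terms $\omega h^{n+1}_\ij\bigl(v^n_\ij(u^{n+1}_\ij-u^n_\ij)-u^n_\ij(v^{n+1}_\ij-v^n_\ij)\bigr)$ cannot be absorbed: Young's inequality turns them into $\omega^2$-weighted kinetic-energy terms $\omega^2 h^{n+1}((u^n)^2+(v^n)^2)$, and the only available dissipation, $(q^2+r^2)/(\eta\delt)$ with $q=\eta\delt(\derx\phi-\omega v^n)$, does not control $(u^n)^2$ or $(v^n)^2$ (it vanishes at geostrophic balance even when the velocities do not). At best you would obtain $E^{n+1}\leq(1+C\delt)E^n$, not the monotone decay \eqref{eqn:loc-eng-ineq}. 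Second, testing with $u^{n+1}$ breaks the duality cancellation between the pressure work $h^{n+1}_\ij u^n_\ij\derx\phi^{n+1}_\ij$ and the potential-energy convection $\phi^{n+1}_\ij\derx(h^{n+1}u^n)_\ij$, leaving an extra uncontrolled term $h^{n+1}(u^{n+1}-u^n)\derx\phi^{n+1}$. Third, your own step three treats $\frac{h^{n+1}_\ij}{2\delt}\bigl((u^{n+1}_\ij-u^n_\ij)^2+(v^{n+1}_\ij-v^n_\ij)^2\bigr)$ as a \emph{remainder to be absorbed}, which is only the case if you test with the old velocities (your test with $u^{n+1}$ places it on the good side of the inequality), so the proposal is internally inconsistent. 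The paper's route is: test with $u^n,v^n$, accept the anti-dissipative remainder $R^{n+1}_\ij$ of \eqref{eqn:kin-rem}, bound $\frac{h^{n+1}}{2\delt}(u^{n+1}-u^n)^2$ from above via the velocity update and $(A+B+C)^2\leq 3(A^2+B^2+C^2)$, kill the convective part with Cauchy--Schwarz and condition (2), and observe that the leftover $\frac{3}{2}\delt h^{n+1}(\derx\phi-\omega v^n)^2+\frac{3\omega^2\delt}{2h^{n+1}}\bigl((q^{n+1})^2+(r^{n+1})^2\bigr)$ is dominated by the stabilisation dissipation exactly under conditions (1) and (3) -- which is where the specific constants $\frac{3}{2}$ and $\frac{2h^{n+1}}{3\omega^2\eta^2}$ come from.
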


The time-step restriction stated in condition \textit{(2)} of the above
theorem originates from the upwind choice of the momentum convection
fluxes. However, its direct implementation is not straightforward.
Therefore, we introduce an alternative sufficient condition, which is
simpler to apply in practice. This condition can be derived in a manner
similar to that in \cite{CDV17}. 

\begin{lemma}[Sufficient Time-step Condition]
\label{lem:suff-tstep}
    Suppose that the time-step $\delt$ satisfies 
    \begin{equation}
    \label{eqn:suff-tstep}
        \begin{split}
            &\delt\biggl(\frac{2}{\delx} + \frac{2}{\dely}\biggr)(\max\lbrace\abs{u^n_{\ij}},\abs{u^{n}_{i+1,j}}\rbrace + \sqrt{\frac{\eta}{\max\lbrace h^{n+1}_\ij, h^{n+1}_{i+1,j}\rbrace}}\Lambda^{n+1}_\ipj) < \min\biggl\lbrace 1, \frac{\min\lbrace h^n_\ij, h^n_{i+1,j}\rbrace}{\max\lbrace h^{n+1}_\ij, h^{n+1}_{i+1,j}\rbrace}\biggr\rbrace \\
            &\delt\biggl(\frac{2}{\delx} + \frac{2}{\dely}\biggr)(\max\lbrace\abs{v^n_{\ij}},\abs{v^{n}_{i,j+1}}\rbrace + \sqrt{\frac{\eta}{\max\lbrace h^{n+1}_\ij, h^{n+1}_{i,j+1}\rbrace}}\Lambda^{n+1}_\ijp) < \min\biggl\lbrace 1, \frac{\min\lbrace h^n_\ij, h^n_{i,j+1}\rbrace}{\max\lbrace h^{n+1}_\ij, h^{n+1}_{i,j+1}\rbrace}\biggr\rbrace,
        \end{split}
    \end{equation}
    where $\Lambda^{n+1}_\ipj = \sqrt{\max\lbrace\abs{\derx\phi^{n+1}_\ij - \omega v^n_\ij}, \abs{\derx\phi^{n+1}_{i+1,j} - \omega v^n_{i+1,j}}\rbrace}$ and\\ 
    $\Lambda^{n+1}_\ijp = \sqrt{\max\lbrace\abs{\dery\phi^{n+1}_\ij + \omega u^n_\ij}, \abs{\dery\phi^{n+1}_{i,j+1} + \omega u^n_{i,j+1}}\rbrace}$. Then, $\delt$ satisfies the condition (2) given in Theorem \ref{thm:eng-stab}.
\end{lemma}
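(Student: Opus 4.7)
The plan is to derive condition (2) of Theorem~\ref{thm:eng-stab} from the CFL-type bounds in \eqref{eqn:suff-tstep} by controlling the upwind-split convective fluxes through their absolute values. I first reformulate: since $\F^{n+1,-}_\ipj \leq 0$, $\F^{n+1,+}_\imj \geq 0$, and likewise for $\G$, condition (2) is equivalent to
\[
  h^{n+1}_\ij \geq \frac{3\delt}{\delx}\bigl(|\F^{n+1,-}_\ipj| + |\F^{n+1,+}_\imj|\bigr) + \frac{3\delt}{\dely}\bigl(|\G^{n+1,-}_\ijp| + |\G^{n+1,+}_\ijm|\bigr),
\]
and since $|\F^{n+1,\pm}_\ipj| \leq |\F^{n+1}_\ipj|$ (and similarly for $\G$), it suffices to bound the four unsplit absolute fluxes at the edges of $K_\ij$.

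The main edge estimate follows from the flux definition \eqref{eqn:mss-flux} combined with the stabilisation expression \eqref{eqn:stab-term-exp}; applying the triangle inequality to the midpoint averages $\ldblbrace\cdot\rdblbrace$ yields
\[
  |\F^{n+1}_\ipj| \leq \max\{h^{n+1}_\ij, h^{n+1}_{i+1,j}\}\max\{|u^n_\ij|, |u^n_{i+1,j}|\} + \eta\delt\,(\Lambda^{n+1}_\ipj)^2,
\]
with analogous bounds at the remaining three edges. The stabilisation contribution here is quadratic in $\delt$ because $q^{n+1}$ and $r^{n+1}$ themselves carry a factor of $\delt$.

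The hypothesis \eqref{eqn:suff-tstep} is then applied in two stages. Its left-hand side splits naturally into a linear advective piece $\delt\max|u^n|(2/\delx+2/\dely)$ and a stabilisation piece $\delt\sqrt{\eta/\max h^{n+1}}\,\Lambda^{n+1}_\ipj\,(2/\delx+2/\dely)$; both are bounded by $R := \min\{1, \min h^n/\max h^{n+1}\}$. The linear bound controls the advective contribution directly, while squaring the stabilisation bound yields
\[
  \delt^2\eta(\Lambda^{n+1}_\ipj)^2/\max h^{n+1}\cdot(2/\delx+2/\dely)^2 \leq R^2 \leq R,
\]
which is precisely the scaling required to absorb the quadratic-in-$\delt$ stabilisation contribution---hence the square-root structure in \eqref{eqn:suff-tstep}. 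The factors $2/\delx$ and $2/\dely$ in the hypothesis also account for summing across the two edges of $K_\ij$ in each coordinate direction. Substituting these bounds into the reformulated condition and using $\max h^{n+1}\cdot R \leq \min h^n \leq h^n_\ij$ reduces the task to a comparison between $h^n_\ij$ and $h^{n+1}_\ij$, which follows from positivity propagation via the discrete mass balance \eqref{eqn:disc-mss-bal-2} under the same time-step restriction.

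The main obstacle is the simultaneous handling of two distinct scalings---the linear advective CFL and the quadratic-in-$\delt$ stabilisation term---while maintaining strict control over constants. A further subtlety is that condition (2) places $h^{n+1}_\ij$ in the denominator, so one must guarantee its positivity (itself a consequence of applying \eqref{eqn:suff-tstep} to the mass balance) and convert the edge-local maxima of $h^{n+1}$ across adjacent cells into a cell-local lower bound on $h^{n+1}_\ij$. The two components of the minimum on the right-hand side of \eqref{eqn:suff-tstep} are precisely what make both of these possible, following the strategy of \cite{CDV17}.
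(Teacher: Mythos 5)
Your proposal is correct and follows essentially the same route the paper takes: the paper gives no standalone proof of this lemma (it defers to \cite{CDV17}), but the argument it relies on appears inside the proof of Theorem~\ref{thm:exis-pos}, namely that \eqref{eqn:suff-tstep} yields the unsplit-flux bound \eqref{eqn:exp-time-step} (your triangle-inequality estimate of $\ldblbrace h^{n+1}u^n\rdblbrace$ and $\ldblbrace q^{n+1}\rdblbrace$, with the squared stabilisation piece absorbed by $R^2\le R$), which in turn gives the two-sided comparison \eqref{eqn:imp-exp-height} between $h^n_\ij$ and $h^{n+1}_\ij$ needed to close condition~(2). Your decomposition into linear advective and quadratic-in-$\delt$ stabilisation contributions, the use of $|\F^{n+1,\pm}|\le|\F^{n+1}|$, and the role of the two branches of the minimum all match the paper's (implicit) argument.
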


Note that the stabilisation terms in the balance of the water height
\eqref{eqn:disc-mss-bal-2} lead to a linear elliptic problem for
$\phi^{n+1}_\ij$. Under the time-step restriction
\eqref{eqn:suff-tstep}, we can show that the matrix associated with this
linear problem is invertible, thereby guaranteeing a unique solution
$\phi^{n+1}_\ij$. Consequently, we recover $h^{n+1}_\ij =
\frac{\phi^{n+1}_\ij}{g} - b_\ij$. The velocities $u^{n+1}_\ij$ and
$v^{n+1}_\ij$ are then obtained explicitly from
\eqref{eqn:disc-x-mom-bal}–\eqref{eqn:disc-y-mom-bal}. Furthermore,
the positivity of the water height $h^{n+1}_\ij$ is ensured, provided
$h^n_\ij > 0$. We summarise these observations in the following
theorem. 

\begin{theorem}[Existence and Positivity of a Solution]
\label{thm:exis-pos}
    Assume that $(h^n_\ij, u^n_\ij, v^n_\ij)$ is known with
    $h^n_\ij>0$ for each $(\ij)$. Then, under the time-step
    restriction \eqref{eqn:suff-tstep}, there exists a solution
    $(h^{n+1}_\ij, u^{n+1}_\ij, v^{n+1}_\ij)$ of the scheme
    \eqref{eqn:scheme} such that $h^{n+1}_\ij >0$ for each $(\ij)$. In
    particular, if $h^0_\ij>0$, then $h^n_\ij>0$ for each $1\leq n\leq
    N$.  
\end{theorem}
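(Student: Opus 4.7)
The plan is to execute the four steps outlined in the paragraph preceding the theorem.

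First, I substitute the explicit formulas \eqref{eqn:stab-term-exp} for $q^{n+1}_\ij$ and $r^{n+1}_\ij$ together with the identification $h^{n+1}_\ij = \phi^{n+1}_\ij/g - b_\ij$ into the reformulated mass balance \eqref{eqn:disc-mss-bal-2}. Because $q^{n+1}$ and $r^{n+1}$ depend linearly on $\derx\phi^{n+1}$ and $\dery\phi^{n+1}$, the contributions $-\derx q^{n+1} - \dery r^{n+1}$ give rise to a symmetric positive semi-definite second-order operator $A_\Delta$ acting on $\phi^{n+1}$, while the centred fluxes $\derx(h^{n+1}u^n)$ and $\dery(h^{n+1}v^n)$ contribute a first-order operator $A_{\mathrm{adv}}$ that is linear in the known velocities. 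The outcome is a linear system on the periodic mesh of the schematic form
\begin{equation*}
\Bigl(\tfrac{1}{g}I + \delt\,A_{\mathrm{adv}} + \eta\delt^2\,A_\Delta\Bigr)\phi^{n+1} = f^n,
\end{equation*}
where $f^n$ collects $h^n$, $gb$, and the explicit Coriolis contribution $\eta\omega\delt(\derx v^n - \dery u^n)$.

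Next, I establish invertibility by a discrete energy estimate. Testing the homogeneous equation with $\phi^{n+1}$ and summing over the primal cells, the identity block and $A_\Delta$ block yield the non-negative contributions $\tfrac{1}{g}\|\phi^{n+1}\|_{L^2}^2$ and $\eta\delt^2\langle A_\Delta\phi^{n+1},\phi^{n+1}\rangle\geq 0$, respectively. Using the discrete duality identities \eqref{eqn:dual-rel}, the skew-symmetric part of $A_{\mathrm{adv}}$ drops out, and its symmetric residual, proportional to a discrete divergence of $(u^n,v^n)$, can be absorbed into the mass term under the CFL-type restriction \eqref{eqn:suff-tstep}. This yields strict coercivity and therefore a unique solution $\phi^{n+1}$. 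Defining $h^{n+1}_\ij = \phi^{n+1}_\ij/g - b_\ij$ and substituting into the explicit velocity updates \eqref{eqn:disc-x-vel}--\eqref{eqn:disc-y-vel} then furnishes $u^{n+1}_\ij$ and $v^{n+1}_\ij$, provided $h^{n+1}_\ij > 0$.

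The main obstacle is the positivity $h^{n+1}_\ij > 0$, which I would establish by a discrete minimum principle applied to the implicit mass balance \eqref{eqn:disc-mss-bal}. Let $(i_\star,j_\star)$ be a cell where $h^{n+1}$ attains its minimum. Decomposing the centred convective fluxes $\F^{n+1}$ and $\G^{n+1}$ into their $u^{n,\pm}$ and $v^{n,\pm}$ upwind components and exploiting the diffusive sign structure of the stabilisation differences $-\ldblbrace q^{n+1}\rdblbrace$ and $-\ldblbrace r^{n+1}\rdblbrace$, the balance at $(i_\star,j_\star)$ can be rearranged so that $h^{n+1}_{i_\star,j_\star}$ carries a positive coefficient and the contributions from neighbouring cells $h^{n+1}_{k,l}$ appear with non-negative weights. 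Using $h^{n+1}_{k,l}\geq h^{n+1}_{i_\star,j_\star}$ at the minimum, together with the precise ratios $\min h^n/\max h^{n+1}$ built into the sufficient condition \eqref{eqn:suff-tstep}, one derives a lower bound of the form $h^{n+1}_{i_\star,j_\star}\geq c\,h^n_{i_\star,j_\star} > 0$ for an explicit constant $c>0$ dictated by \eqref{eqn:suff-tstep}. Positivity of $h^n_\ij$ for every $1 \leq n \leq N$ then follows by induction on $n$ from the assumption $h^0_\ij > 0$.
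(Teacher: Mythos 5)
Your reduction to a linear system for $\phi^{n+1}$ is exactly the paper's first step, and your coercivity/energy argument for invertibility is a legitimate alternative to what the paper actually does: the paper expands the stencil of the elliptic problem \eqref{eqn:ellpt-prob} and checks strict diagonal dominance of the matrix under $\frac{\delt}{\delx}\abs{u^n_\ij}<\frac12$, $\frac{\delt}{\dely}\abs{v^n_\ij}<\frac12$, which follow from \eqref{eqn:suff-tstep}. Your testing argument (skew part of the advection cancels, the residual $\tfrac12\sum_\ij \phi_\ij\phi_{i+1,j}\llbracket u^n\rrbracket_\ipj/\delx$ is absorbed under the same CFL restriction) reaches the same conclusion and is fine, if slightly less elementary.

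The positivity step, however, has a genuine gap. You assert that at the minimizing cell the neighbouring values $h^{n+1}_{k,l}$ ``appear with non-negative weights,'' i.e.\ that the implicit mass-update matrix is monotone. It is not: the mass flux \eqref{eqn:mss-flux} is \emph{centred}, $\F^{n+1}_\ipj=\ldblbrace h^{n+1}u^n\rdblbrace_\ipj-\ldblbrace q^{n+1}\rdblbrace_\ipj$, so the flux difference contributes off-diagonal coefficients $\mp\frac{\delt}{2\delx}u^n_{i\pm1,j}$ at the first neighbours, whose signs depend on the sign of $u^n$ and are in general of both signs. Rewriting the centred flux as an upwind flux plus a correction, as you propose, produces an \emph{antidiffusive} correction of the wrong sign, and the only available dissipation --- the stabilisation term, whose divergence yields the wide-stencil Laplacian $\frac{g\eta\delt^2}{4\delx^2}(\phi_{i+2,j}-2\phi_\ij+\phi_{i-2,j})$ --- couples $K_\ij$ only to its \emph{second} neighbours and therefore cannot repair the sign of the first-neighbour entries. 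So the M-matrix structure your minimum principle relies on is absent, and the claimed inequality at the minimizing cell does not follow. The paper avoids this entirely: it shows that \eqref{eqn:suff-tstep} implies the pointwise flux bound \eqref{eqn:exp-time-step}, $\frac{\delt}{\delx}(\lvert\F^{n+1}_\ipj\rvert+\lvert\F^{n+1}_\imj\rvert)+\frac{\delt}{\dely}(\lvert\G^{n+1}_\ijp\rvert+\lvert\G^{n+1}_\ijm\rvert)\leq\frac{h^n_\ij}{4}$ (this is where the ratios $\min h^n/\max h^{n+1}$ in \eqref{eqn:suff-tstep} are used, since $\F^{n+1}$ itself contains $h^{n+1}$), and then reads off from \eqref{eqn:disc-mss-bal} the two-sided bound $\frac34 h^n_\ij\leq h^{n+1}_\ij\leq\frac54 h^n_\ij$, no sign structure of the matrix required; the upper bound is moreover used later in Remark \ref{rem:exp-choices}. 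You should replace the minimum-principle step by this direct flux estimate.
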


The next result addresses the well-balancing property of the scheme
and is stated as follows. 
\begin{theorem}[Well-Balancing]
\label{thm:wb}
    Suppose that the solution $(h^n_\ij, u^n_\ij, v^n_\ij)$ at time $t^n$ satisfies
    \begin{equation}
    \label{eqn:disc-jet-x}
        u^n_\ij = 0,\ \dery h^n_\ij = 0,\ \dery b_\ij = 0,\ 
        \dery v^n_\ij = 0,\ \derx\phi^n_\ij - \omega v^n_\ij = 0
    \end{equation}
    for each ($\ij$). Then, under the time-step restriction \eqref{eqn:suff-tstep}, the updated solution $(h^{n+1}_\ij, u^{n+1}_\ij, v^{n+1}_\ij)$ at time $t^{n+1}$ satisfies $(h^{n+1}_\ij, u^{n+1}_\ij, v^{n+1}_\ij) = (h^n_\ij, u^n_\ij, v^n_\ij)$ and
    \[
        u^{n+1}_\ij = 0,\ \dery h^{n+1}_\ij = 0,\ \dery v^{n+1}_\ij = 0,\ \derx\phi^{n+1}_\ij - \omega v^{n+1}_\ij = 0.
    \]
    An analogous conclusion holds if we assume the condition \eqref{eqn:jet-y} at time $t^n$.
\end{theorem}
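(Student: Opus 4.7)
The plan is to exhibit $(h^n_\ij,\,0,\,v^n_\ij)$ as a solution of the scheme at time $t^{n+1}$ and then invoke the unique solvability of the discrete system (cf.\ Theorem \ref{thm:exis-pos}) to conclude that the actual update must coincide with it. Once this identification is made, every invariant in the conclusion of the theorem, including the discrete geostrophic balance at time $t^{n+1}$, is automatic.

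First, I would translate the hypotheses \eqref{eqn:disc-jet-x} into genuine pointwise information. The discrete derivative $\dery z_\ij$ reduces to the symmetric difference $(z_{i,j+1}-z_{i,j-1})/(2\dely)$, so $\dery z = 0$ on the periodic grid forces the first differences in $j$ to be constant, and this constant must sum to zero over a full period. Applied to $h^n$, $b$, and $v^n$, this yields pointwise $y$-independence, and hence $\phi^n = g(h^n + b)$ is also $y$-independent with $\dery\phi^n_\ij = 0$ for every $\ij$.

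Next, I would substitute the candidate into the scheme and check each equation. Positing $\phi^{n+1} = \phi^n$ (equivalently $h^{n+1} = h^n$), the stabilisation terms in \eqref{eqn:stab-term-exp} both vanish: $q^{n+1}_\ij = \eta\delt(\derx\phi^n_\ij - \omega v^n_\ij) = 0$ by the assumed geostrophic balance, and $r^{n+1}_\ij = \eta\delt\dery\phi^n_\ij = 0$ since $u^n = 0$ and $\dery\phi^n = 0$. Hence the mass flux $\F^{n+1}_\ipj$ vanishes, while $\G^{n+1}_\ijp$ reduces to $\ldblbrace h^n v^n\rdblbrace_\ipj$, whose discrete $y$-divergence is zero by the $y$-independence of $h^n v^n$; this matches $h^{n+1}_\ij = h^n_\ij$ in \eqref{eqn:disc-mss-bal-2}. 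The $x$-momentum equation \eqref{eqn:disc-x-mom-bal} collapses, after the vanishing of all upwind transport terms (each carrying the factor $u^n = 0$), to the identity $h^n_\ij\derx\phi^n_\ij = \omega h^n_\ij v^n_\ij$, which is precisely the assumed geostrophic balance. The $y$-momentum equation \eqref{eqn:disc-y-mom-bal} similarly reduces to the requirement that the $\G$-transport of $v^n$ has zero discrete $y$-divergence, which follows again from $\G^{n+1}_\ijp = \G^{n+1}_\ijm$ (by $y$-independence of $h^n v^n$) together with the fact that the upwinded values of $v^n$ on the horizontal edges equal $v^n_\ij$ on both sides.

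The main obstacle is the first step: establishing that the centred discrete derivative $\dery$ annihilates only $y$-constant grid functions on the periodic mesh, which legitimises the pointwise manipulations of $h^n v^n$ and of the $\G$-flux. Once this is secured, the remaining verifications are routine substitutions that exploit the cancellations built into the stabilisation \eqref{eqn:stab-term-exp}. Having produced a solution of the scheme, the unique solvability of the elliptic problem for $\phi^{n+1}$ under \eqref{eqn:suff-tstep} forces $h^{n+1}_\ij = h^n_\ij$, and the explicit momentum updates then force $u^{n+1}_\ij = 0$ and $v^{n+1}_\ij = v^n_\ij$; the invariants $\dery h^{n+1} = \dery v^{n+1} = 0$ and $\derx\phi^{n+1} - \omega v^{n+1} = 0$ at time $t^{n+1}$ are then immediate. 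The case \eqref{eqn:jet-y} is handled by the completely symmetric argument obtained upon exchanging the roles of $x$ and $y$.
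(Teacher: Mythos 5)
Your overall strategy is the same as the paper's: exhibit the old state as a solution of the implicit update (equivalently, of the elliptic problem for $\phi^{n+1}$), invoke the unique solvability guaranteed under \eqref{eqn:suff-tstep} by Theorem \ref{thm:exis-pos} to conclude $h^{n+1}=h^n$, and then read off $u^{n+1}=0$ and $v^{n+1}=v^n$ from the explicit momentum updates. The verification of the individual equations (vanishing of $q^{n+1}$, $r^{n+1}$, hence of $\F^{n+1}$, and the reduction of the momentum balances to the assumed geostrophic balance) matches the paper's computation.

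The one step that does not hold up is the very one you single out as the ``main obstacle.'' From $\dery z_\ij = (z_{i,j+1}-z_{i,j-1})/(2\dely)=0$ you conclude that the first differences in $j$ are \emph{constant} and then sum to zero over a period; in fact the relation $z_{i,j+1}=z_{i,j-1}$ forces the first differences to \emph{alternate in sign}, so on a periodic grid with an even number of cells in the $y$-direction the kernel of $\dery$ contains checkerboard modes $z_{i,j}=a_i+(-1)^j c_i$, and pointwise $y$-independence does not follow. For most of your verifications this is harmless, since the centred condition is preserved under products (e.g.\ $\dery(h^nv^n)_\ij=0$ follows from $h^n_{i,j+1}=h^n_{i,j-1}$ and $v^n_{i,j+1}=v^n_{i,j-1}$, and likewise $\G^{n+1}_\ijp=\G^{n+1}_\ijm$). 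But it is genuinely needed where you claim that the upwinded edge values satisfy $v^n_\ijp=v^n_\ijm$: the upwind value picks one of the two \emph{adjacent} cell values, so a surviving checkerboard mode would give $v^n_\ijp\neq v^n_\ijm$ and the $\G$-transport term in the $y$-momentum balance would not cancel, breaking $v^{n+1}=v^n$. The paper avoids the issue by simply reading the hypothesis \eqref{eqn:disc-jet-x} as genuine constancy in $y$ (``since $v^n_\ij$ is constant in the $y$ direction\dots''); to make your version airtight you should either adopt that reading of the hypothesis, restrict to an odd number of cells in $y$, or otherwise rule out the alternating mode rather than assert that $\dery$ annihilates only $y$-constant grid functions.
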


To prove the consistency of the scheme, we require some additional
assumptions, namely the boundedness of the water height and a bound on
the discrete derivatives of the water height. The former assumption is
needed to obtain bounds on the stabilisation terms, ensuring that
these terms vanish in the limit $\delta_\T \to 0$. The latter bound on
the discrete derivatives is required to `convert’ the non-conservative
pressure term into conservative form. 

In the following, given a piecewise constant function $(z_\ij)_{\ij}$ on the
mesh $\T$, we denote by $z_\T = \sum_{\ij}z_\ij \X_\ij$ where $\X_\ij$
denotes the indicator function of the cell $K_\ij$.

The statement of the theorem is as follows. 
\begin{theorem}[Consistency]
\label{thm:cons}
    Suppose that there exist constants $0<\underline{h}\leq\overline{h}$ such that 
    \begin{equation}
    \label{eqn:h-bnd}
        0<\underline{h}\leq h^n_\ij\leq\overline{h}
    \end{equation}
    uniformly for each $n$ and $(\ij)$. Also, assume that 
    \begin{equation}
    \label{eqn:wk-bv-assmp}
        \norm{\derx^\E h^{n+1}_\T}_{L^2}\lesssim \delx^{-1/2},\quad \norm{\dery^\E h^{n+1}_\T}_{L^2}\lesssim \dely^{-1/2}. 
    \end{equation}
    Further, suppose that $\delta_\T\in (0,\delta_0]$ for $\delta_0$ sufficiently small with $\delt\approx\delta_\T$. Then, the numerical solutions generated by the scheme are consistent with the RSW system \eqref{eqn:cv-rsw}, i.e.\
    \begin{equation}
    \label{eqn:cons-mss-bal}
            -\iint\limits_{\Omega} h^0_\T\psi(0,\cdot)\,\dx\,\dy = \sum_{n = 0}^{N-1}\int\limits_{t^{n}}^{t^{n+1}}\!\!\!\!\iint\limits_{\Omega}\bigl\lbrack h^n_\T\Dt\psi + h^{n+1}_\T u^n_\T\Dx\psi + h^{n+1}_\T v^n_\T\Dy\psi\bigr\rbrack\,\dx\,\dy\,\dt + \mathcal{C}^{mass}_{\T,\delt},
    \end{equation}
    for any $\psi\in\Cinf([0,T)\times\Omega)$;
    \begin{gather}
    \label{eqn:cons-xmom-bal}
        -\iint\limits_{\Omega}h^0_\T u^0_\T \Psi(0,\cdot)\,\dx\,\dy = \sum_{n = 0}^{N-1}\int\limits_{t^{n}}^{t^{n+1}}\!\!\!\!\iint\limits_{\Omega} \lbrack h^n_\T u^n_\T\Dt\Psi + m^{n+1}_{x,\D^{x}} u_{\D^{x}}^{n}\Dx\Psi + m^{n+1}_{y, \D^{y}} u^n_{\D^{y}}\Dy\Psi \\
        + p^{n+1}_\T\Dx\Psi\rbrack\,\dx\,\dy\,\dt + \sum_{n = 0}^{N-1}\int\limits_{t^{n}}^{t^{n+1}}\!\!\!\!\iint\limits_{\Omega}\lbrack-gh^{n+1}_\T\Dx b\,\Psi + \omega h^{n+1}_\T v^n_\T\Psi\rbrack\,\dx\,\dy\,\dt + \mathcal{C}^{mom,x}_{\T,\delt} = 0, \nonumber
    \end{gather}
    for any $\Psi\in\Cinf([0,\T)\times\Omega)$;
    \begin{gather}
    \label{eqn:cons-ymom-bal}
        -\iint\limits_{\Omega}h^0_\T v^0_\T \Phi(0,\cdot)\,\dx\,\dy = \sum_{n = 0}^{N-1}\int\limits_{t^{n}}^{t^{n+1}}\!\!\!\!\iint\limits_{\Omega} \lbrack h^n_\T v^n_\T\Dt\Phi + m^{n+1}_{x,\D^{x}} v_{\D^{x}}^{n}\Dx\Phi + m^{n+1}_{y, \D^{y}} v^n_{\D^{y}}\Dy\Phi \\
        + p^{n+1}_\T\Dy\Phi\rbrack\,\dx\,\dy\,\dt + \sum_{n = 0}^{N-1}\int\limits_{t^{n}}^{t^{n+1}}\!\!\!\!\iint\limits_{\Omega}\lbrack-gh^{n+1}_\T\Dy b\,\Phi - \omega h^{n+1}_\T u^n_\T\Phi\rbrack\,\dx\,\dy\,\dt + \mathcal{C}^{mom,y}_{\T,\delt} = 0, \nonumber
    \end{gather}
    for any $\Phi\in\Cinf([0,T)\times\Omega)$. Here, any function with the subscript $\D^x$ (resp.\ $\D^y$) denotes the reconstruction of said function on the dual grid $\D^x$ (resp.\ $\D^y$). The consistency errors are such that 
    \[
    \abs{\mathcal{C}^{mass}_{\T,\delt}}, \abs{\mathcal{C}^{mom,x}_{\T,\delt}}, \abs{\mathcal{C}^{mom,y}_{\T,\delt}}\to 0\ \text{as}\ \delta_\T\to 0.
    \]
\end{theorem}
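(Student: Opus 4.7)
Fix $\psi,\Psi,\Phi\in\Cinf([0,T)\times\Omega)$. The plan is to multiply each of the discrete balance laws \eqref{eqn:disc-mss-bal}--\eqref{eqn:disc-y-mom-bal} by the point values of the appropriate test function at $(t^n,x_i,y_j)$, multiply by $\delx\dely\delt$, and sum over $\ij$ and $0\leq n\leq N-1$. Discrete summation by parts in time (Abel summation for the discrete $\Dt$) together with the duality identity \eqref{eqn:dual-rel} in space shifts the discrete derivatives onto the test functions. Replacing cell-centered test-function values by the continuous ones produces Taylor remainders that, using the uniform bound \eqref{eqn:h-bnd} together with the smoothness and compact support of the test functions, are $O(\delt+\delta_\T)$; representing the convective fluxes via reconstructions on the dual grids $\D^x,\D^y$ (yielding the terms $m^{n+1}_{x,\D^x}u^n_{\D^x}$ etc.\ that appear in the statement) introduces a further $O(\delta_\T)$ error by Lemma~\ref{lem:mesh-recon-conv}. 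Everything that does not match \eqref{eqn:cons-mss-bal}--\eqref{eqn:cons-ymom-bal} is collected in $\mathcal{C}^{mass}_{\T,\delt},\mathcal{C}^{mom,x}_{\T,\delt},\mathcal{C}^{mom,y}_{\T,\delt}$.

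Two non-trivial error families arise. The first is the \emph{stabilisation remainder}, namely the contributions $\sum_{n,\ij}\delt\,\delx\dely\,q^{n+1}_\ij\,\Dx\psi$ (and their $r$- and $y$-analogues) coming from $\derx q^{n+1}$ in \eqref{eqn:disc-mss-bal-2} and from the Coriolis corrections $\pm\omega q^{n+1},\pm\omega r^{n+1}$ in the momentum equations. Fixing $\eta>\tfrac{3}{2}\overline h$, which is legitimate by \eqref{eqn:h-bnd}, the global energy estimate \eqref{eqn:glob-ent-ineq} yields
\[
\sum_{n=0}^{N-1}\Bigl(\|q^{n+1}\|_{L^2(\Omega)}^2+\|r^{n+1}\|_{L^2(\Omega)}^2\Bigr)\lesssim \sum_{\ij}\delx\dely\,E^0_\ij\lesssim 1,
\]
hence $\|q\|_{L^2(\odom)}^2+\|r\|_{L^2(\odom)}^2\lesssim\delt$, and a Cauchy--Schwarz estimate against $\|\nabla\psi\|_\infty$ closes the bound. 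The companion \emph{upwind-diffusion remainder} $\tfrac{1}{2}|\F^{n+1}_\ipj|\llbracket u^n\rrbracket_\ipj$ arising from the decomposition $\F^{n+1}_\ipj u^n_\ipj=\F^{n+1}_\ipj\ldblbrace u^n\rdblbrace_\ipj-\tfrac{1}{2}|\F^{n+1}_\ipj|\llbracket u^n\rrbracket_\ipj$ is controlled by the same energy estimate, since the derivation of \eqref{eqn:loc-eng-ineq} produces a non-negative kinetic-energy dissipation of this form that is absorbed into $\sum E^0_\ij$.

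The second, and more delicate, family is the \emph{non-conservative pressure term} $h^{n+1}_\ij\,\derx\phi^{n+1}_\ij$ and its $y$-analogue. Splitting $\phi=g(h+b)$, the topography part $gh\derx b$ directly matches the source $-gh^{n+1}_\T\,\Dx b\,\Psi$ up to a Taylor error. For the pressure part $gh\derx h$, the discrete algebraic identity
\[
gh^{n+1}_\ij\,\derx h^{n+1}_\ij=\derx\!\Bigl(\tfrac{1}{2}g(h^{n+1})^2\Bigr)_\ij-\frac{g}{8\delx}\Bigl[\bigl(\llbracket h^{n+1}\rrbracket_\imj\bigr)^2-\bigl(\llbracket h^{n+1}\rrbracket_\ipj\bigr)^2\Bigr]
\]
identifies the conservative pressure $p^{n+1}_\T=\tfrac{1}{2}g(h^{n+1})^2$ of \eqref{eqn:cons-xmom-bal}; the second, non-conservative summand, after multiplication by $\Psi^n_\ij$ and summation by parts in $i$, telescopes into
\[
\sum_{n,\ij}\delt\,\dely\,\tfrac{g}{8}\bigl(\llbracket h^{n+1}\rrbracket_\ipj\bigr)^2\,\llbracket\Psi^n\rrbracket_\ipj.
\]
Using $|\llbracket\Psi\rrbracket_\ipj|\lesssim\delx\|\Dx\Psi\|_\infty$ together with the weak-BV hypothesis \eqref{eqn:wk-bv-assmp}, this quantity is bounded by $\delx\|\Dx\Psi\|_\infty\sum_{n=0}^{N-1}\delt\,\delx\,\|\derx^\E h^{n+1}\|_{L^2}^2\lesssim\delx\cdot T$, which vanishes as $\delta_\T\to 0$.

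The main obstacle is precisely this last step. Without \eqref{eqn:wk-bv-assmp} the accumulated jumps of $h$ in the pressure remainder have no obvious sign and may fail even to be $O(1)$, which is why this hypothesis is built into the statement. Once it is closed, gathering the Taylor, reconstruction, stabilisation, upwind-diffusion, and pressure remainders into $\mathcal{C}^{mass}_{\T,\delt},\mathcal{C}^{mom,x}_{\T,\delt},\mathcal{C}^{mom,y}_{\T,\delt}$ and using $\delt\approx\delta_\T\to 0$ yields the claimed consistency.
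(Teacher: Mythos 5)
Your overall strategy coincides with the paper's: test against cell interpolants of $\psi,\Psi,\Phi$, sum by parts in time and use the duality relations \eqref{eqn:dual-rel} in space, control the stabilisation contributions by Cauchy--Schwarz against the bound $\sum_n\bigl(\lVert q^{n+1}_\T\rVert_{L^2}^2+\lVert r^{n+1}_\T\rVert_{L^2}^2\bigr)\lesssim 1$ extracted from \eqref{eqn:glob-ent-ineq} with $\eta>\tfrac{3}{2}\overline h$ (giving the $O(\sqrt{\delt})$ errors the paper calls $T_3$, $S_4+S_5$, $S_9$), and convert the non-conservative pressure term via the identity $z_\ij\derx z_\ij=\derx(z^2/2)_\ij-(\llbracket z\rrbracket_\ipj^2-\llbracket z\rrbracket_\imj^2)/(4\delx)$, with the telescoped remainder killed by the weak-BV hypothesis \eqref{eqn:wk-bv-assmp}. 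Your reading of why \eqref{eqn:wk-bv-assmp} is indispensable is exactly the paper's. (Minor slip: your version of the pressure identity carries the wrong sign and a factor $1/8$ in place of $1/4$; this does not affect the order of the estimate.)

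The one place you genuinely depart from the paper is the convective flux. The paper does \emph{not} split $\F^{n+1}_\ipj u^n_\ipj$ into a central part plus an upwind-diffusion remainder: it simply takes the dual-grid reconstruction to be the upwind value itself, $(u^n_{\D^x})_\ipj=u^n_\ipj$, which is an admissible reconstruction ($\mu_\ipj\in\{0,1\}$) and is precisely the object appearing in \eqref{eqn:cons-xmom-bal}; no additional error term arises, and the weak convergence of these reconstructions is deferred to Lemma~\ref{lem:mesh-recon-conv} in the proof of Theorem~\ref{thm:conv}. Your alternative needs the bound $\sum_n\delt\sum_\ij\dely\,\lvert\F^{n+1}_\ipj\rvert\,\llbracket u^n\rrbracket_\ipj^2\lesssim 1$, and your justification --- that this dissipation is ``absorbed into $\sum E^0_\ij$'' --- is not supported by \eqref{eqn:glob-ent-ineq} as stated: in the stability proof the upwind dissipation is the first bracketed factor in \eqref{eqn:kin-rem-est-2}, whose product with the CFL factor is merely bounded above by zero and then discarded, so it does not survive into the global estimate. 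Recovering it would require a strictly positive lower bound in the CFL condition and a separate weak-BV argument. Since the decomposition is unnecessary for the statement as written, you should drop it and adopt the paper's reconstruction convention; otherwise the argument as given has a gap at this step.
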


Finally, we present the main theorem stating the convergence of numerical solutions of the present scheme to a DMV solution of the RSW system.

\begin{theorem}[Convergence]
\label{thm:conv}
    Let $\lbrace(\T^\k, \delt^\k)\rbrace_{k\in\N}$ be a sequence of space-time discretizations such that $\delta_\T^\k\to 0$ as $k\to\infty$ and let the assumptions of Theorem \ref{thm:cons} hold true. Let $\lbrace(h^\k, m_x^\k, m_y^\k)\rbrace_{k\in\N}$ be the sequence of numerical solutions generated by the scheme \eqref{eqn:scheme} corresponding to $\lbrace(\T^\k, \delt^\k)\rbrace_{k\in\N}$. Then, 
    \begin{gather*}
        h^\k\weakstar\langle\V;\th\rangle\ \text{in}\ L^\infty(\odom); \\
        m_x^\k\weakstar\langle\V;\tmx\rangle\ \text{and}\ m_y^\k\weakstar\langle\V;\tmy\rangle\ \text{in}\ L^\infty(0,T; L^2(\Omega)),
    \end{gather*}
    where $\mathcal{V} = \lbrace\V\rbrace_{(t,x,y)\in\odom}$ is a DMV solution of the RSW system in the sense of Definition \ref{def:dmv-sol}.
\end{theorem}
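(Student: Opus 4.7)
The plan is to combine the a priori bounds from the energy estimate (Theorem~\ref{thm:eng-stab}) with the consistency identities (Theorem~\ref{thm:cons}), pass to a Young-measure limit via the fundamental theorem of Young measures, and identify the limiting object with a DMV solution in the sense of Definition~\ref{def:dmv-sol}. First, I would extract uniform bounds. The hypothesis \eqref{eqn:h-bnd} gives $\|h^{(k)}\|_{L^\infty} \lesssim \overline{h}$. From the global energy inequality \eqref{eqn:glob-ent-ineq} and the lower bound $h^{(k)}\geq\underline{h}$, I obtain uniform bounds on $m_x^{(k)}, m_y^{(k)}$ in $L^\infty(0,T;L^2(\Omega))$, together with the crucial decay estimate
\[
\|q^{(k)}\|_{L^2(\odom)}^2 + \|r^{(k)}\|_{L^2(\odom)}^2 \lesssim \delt^{(k)} \longrightarrow 0,
\]
using that $\eta - \tfrac{3}{2}h^{(k)}$ is bounded below. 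Banach--Alaoglu then yields weak-$*$ accumulation points, and an application of the fundamental theorem on Young measures (e.g.\ Ball's theorem) produces a parametrised probability measure $\mcv = \{\V\}_{(t,x,y)\in\odom} \in L^\infty_{weak-*}(\odom;\Pro(\F))$ such that $F(h^{(k)}, m_x^{(k)}, m_y^{(k)}) \weakstar \langle\V; F(\th,\tmx,\tmy)\rangle$ for every Carathéodory $F$ of subquadratic growth.

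Next, I would pass to the limit in the consistency identities \eqref{eqn:cons-mss-bal}--\eqref{eqn:cons-ymom-bal}. Lemma~\ref{lem:mesh-recon-conv} identifies the weak limits of the dual-grid reconstructions $m^{n+1}_{x,\D^x}, u^n_{\D^x}, v^n_{\D^y}$, etc., with the corresponding weak-$*$ limits on the primal grid, so that products of reconstructions converge to the corresponding weakly-$*$ Young-measure averages, modulo a concentration defect. The genuinely nonlinear terms $m_x^2/h$, $m_x m_y/h$, $m_y^2/h$, and $\tfrac{1}{2}gh^2$ need two steps: one rewrites the product of weak limits as the Young-measure average of the product (giving the $\langle\V;\,\cdot\,\rangle$ terms in \eqref{eqn:dmv-x-mom}--\eqref{eqn:dmv-y-mom}), while the difference between the weak limit of the product and the average of the product is a bounded Radon measure by virtue of the uniform $L^\infty(0,T;L^1(\Omega))$ bound on the discrete energy; Riesz representation then produces the symmetric tensor $\mathfrak{R}_{cd}$. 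The linear Coriolis and topography source terms pass trivially to the limit since the reconstructions of $b$ and $\Dx b, \Dy b$ converge strongly. The stabilisation terms vanish in the limit by the $L^2$ decay of $q^{(k)}, r^{(k)}$. Crucially, the non-conservative pressure term $h^{n+1}_\ij \derx\phi^{n+1}_\ij$ is rewritten as $\derx(\tfrac{1}{2}g(h^{n+1})^2) + gh^{n+1}\derx b$ plus a residual controlled by $\delx\,\|\derx^\E h^{n+1}_\T\|_{L^2}^2$, which is $o(1)$ under the weak BV assumption \eqref{eqn:wk-bv-assmp}.

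For the energy inequality \eqref{eqn:eng-ineq-dmv}, I would pass to the limit in \eqref{eqn:glob-ent-ineq}: the convex, coercive integrand $\tfrac{\tmx^2+\tmy^2}{2\th}+\tfrac{1}{2}g\th^2+gb\th$ is weakly-$*$ lower semicontinuous in the Young-measure sense, and the deficit between the liminf of the discrete total energy and the Young-measure average defines the nonnegative measure $\mathfrak{E}_{cd}\in L^\infty(0,T;\M^+(\Omega))$. The continuity in time of $\langle\V;\th\rangle$, $\langle\V;\tmx\rangle$, $\langle\V;\tmy\rangle$ (in the $C_{weak}$ sense) follows from equicontinuity in time of the discrete moments, itself a consequence of the balance laws \eqref{eqn:disc-mss-bal}--\eqref{eqn:disc-y-mom-bal} combined with the uniform $L^\infty$ bounds. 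Finally, the compatibility bound \eqref{eqn:def-comp-dmv} follows from the pointwise algebraic inequality $|m_x m_y|/h \le \tfrac{1}{2}(m_x^2+m_y^2)/h \le \tfrac{1}{2}(\text{kinetic energy density})$, which at the level of defect measures yields constants $\underline{d},\overline{d}$ depending only on $g$ and the bounds for $h$.

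The main obstacle is the handling of the non-conservative pressure gradient in the momentum balances: converting $h\nabla\phi$ into a conservative pressure flux plus a controllable residual is precisely where the weak-BV bound \eqref{eqn:wk-bv-assmp} is indispensable, and any slackness in that assumption would preclude identification of the pressure concentration defect as part of $\mathfrak{R}_{cd}$. A secondary delicate point is showing that the stabilisation cross-terms generated when the upwinded convective fluxes are expanded (notably the $\F^{n+1,\pm}\llbracket u^n\rrbracket$ type contributions in \eqref{eqn:disc-x-vel}--\eqref{eqn:disc-y-vel}) contribute only to $\mathfrak{R}_{cd}$ and $\mathfrak{E}_{cd}$ in a mutually consistent way; this is where the quantitative comparison encoded in \eqref{eqn:def-comp-dmv} must be tracked through the whole limit-passage carefully.
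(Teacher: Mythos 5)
Your proposal follows essentially the same route as the paper: uniform bounds from \eqref{eqn:h-bnd} and \eqref{eqn:glob-ent-ineq}, generation of a Young measure via the fundamental theorem, concentration defect measures obtained from the embedding $L^1(\Omega)\hookrightarrow\M(\Omega)$ for the nonlinear momentum fluxes and the energy, limit passage in the consistency identities via Lemma \ref{lem:mesh-recon-conv}, and the energy inequality \eqref{eqn:eng-ineq-dmv} by passing to the limit in \eqref{eqn:glob-ent-ineq}. The only differences are cosmetic: for \eqref{eqn:def-comp-dmv} the paper simply reads off $\mathrm{tr}(\mathfrak{R}_{cd})=2\mathfrak{E}_{cd}$ from the definitions of the defect measures (so $\underline{d}=\overline{d}=2$), whereas your detour through the pointwise bound on $\abs{m_xm_y}/h$ concerns the off-diagonal entry, which does not enter the trace; likewise the conversion of the non-conservative pressure gradient into $p^{n+1}_\T\Dx\Psi$ plus a residual controlled by \eqref{eqn:wk-bv-assmp} is already packaged inside Theorem \ref{thm:cons} and need not be redone at this stage.
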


The remainder of this manuscript is organised as
follows. Section~\ref{sec:eng-stab} is devoted to the proof of
Theorem~\ref{thm:eng-stab}. In Section~\ref{sec:wb}, we establish
Theorems~\ref{thm:exis-pos}–\ref{thm:wb}. Section~\ref{sec:cons-conv}
contains the proofs of
Theorems~\ref{thm:cons}–\ref{thm:conv}. Numerical results illustrating
the performance of the scheme are presented in
Section~\ref{sec:num-res}, and concluding remarks are given in
Section~\ref{sec:conc}. 

\section{Energy Stability}
\label{sec:eng-stab}

In this section, we prove Theorem \ref{thm:eng-stab}. To begin with,
we first establish discrete identities satisfied by the potential energy $\I$ and the kinetic energy $\K$.

\begin{proposition}[Discrete Energy Identities]
    The discrete solutions of the scheme \eqref{eqn:scheme} satisfy
    \begin{enumerate}
        \item a discrete potential energy identity
        \begin{equation}
        \label{eqn:disc-pot-bal}
            \begin{split}
                \frac{1}{\delt}(\I^{n+1}_\ij - \I^n_\ij) + \phi^{n+1}_\ij\lbrack\derx(h^{n+1}u^n)_\ij + \dery(h^{n+1}v^n)_\ij\rbrack + \frac{g(h^{n+1}_\ij - h^n_\ij)^2}{2\delt}\\
                = \phi^{n+1}_\ij\lbrack\derx q^{n+1}_\ij + \dery r^{n+1}_\ij\rbrack ,
            \end{split}
        \end{equation}
        wherein we have set $\I^k_\ij = \half g (h^k_\ij)^2 + gb_\ij h^k_\ij$ for $k =n,n+1$;

        \item a discrete kinetic energy identity
        \begin{gather}
        \label{eqn:disc-ke}
            \frac{1}{\delt}(\K^{n+1}_\ij - \K^n_\ij) + \frac{1}{2\delx}\Bigl\lbrack\F^{n+1}_\ipj((u^n_\ipj)^2+(v^n_\ipj)^2) -\F^{n+1}_\imj((u^n_\imj)^2+(v^n_\imj)^2)\Bigr\rbrack \\
            +\frac{1}{2\dely}\Bigl\lbrack\G^{n+1}_\ijp((u^n_\ijp)^2+(v^n_\ijp)^2) -\G^{n+1}_\ijm((u^n_\ijm)^2+(v^n_\ijm)^2)\Bigr\rbrack + h^{n+1}_\ij u^{n}_\ij\derx\phi^{n+1}_\ij \nonumber \\
            +h^{n+1}_\ij v^n_\ij\dery\phi^{n+1}_\ij = \omega v^n_\ij q^{n+1}_\ij-\omega u^n_\ij r^{n+1}_\ij + R^{n+1}_\ij, \nonumber 
        \end{gather}
        wherein we have set $\K^k_\ij = \half h^k_\ij((u^k_\ij)^2 + (v^k_\ij)^2)$ for $k=n,n+1$ and the remainder term $R^{n+1}_\ij$ reads 
        \begin{gather}
        \label{eqn:kin-rem}
            R^{n+1}_\ij = \frac{h^{n+1}_\ij}{2\delt}\lbrack(u^{n+1}_\ij - u^n_\ij)^2+(v^{n+1}_\ij - v^n_\ij)^2\rbrack \\
            + \frac{1}{2\delx}\Bigl(\F^{n+1,-}_{\ipj}\bigl(\llbracket u^n\rrbracket_\ipj^2+\llbracket v^n\rrbracket_\ipj^2\bigr) - \F^{n+1,+}_{\imj}\bigl(\llbracket u^n\rrbracket_\imj^2 + \llbracket v^n\rrbracket_\imj^2\bigr)\Bigr) \nonumber \\
            + \frac{1}{2\dely}\Bigl(\G^{n+1,-}_\ijp\bigl(\llbracket u^n\rrbracket^2_\ijp + \llbracket v^n\rrbracket^2_\ijp\bigr) - \G^{n+1,+}_\ijm\bigl(\llbracket u^n\rrbracket^2_\ijm + \llbracket v^n\rrbracket^2_\ijm\bigr)\Bigr).\nonumber
        \end{gather}
    \end{enumerate}
\end{proposition}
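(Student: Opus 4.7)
The plan is to derive each identity by combining the discrete mass balance \eqref{eqn:disc-mss-bal-2} with the velocity updates \eqref{eqn:disc-x-vel}--\eqref{eqn:disc-y-vel}. For the potential-energy identity, I begin from the pointwise decomposition $\I^{n+1}_\ij - \I^n_\ij = \frac{g}{2}((h^{n+1}_\ij)^2 - (h^n_\ij)^2) + gb_\ij(h^{n+1}_\ij - h^n_\ij)$ and apply the elementary convexity identity $\frac{g}{2}(a^2 - b^2) = ga(a-b) - \frac{g}{2}(a - b)^2$ with $a = h^{n+1}_\ij$ and $b = h^n_\ij$ to rewrite the right-hand side as $\phi^{n+1}_\ij(h^{n+1}_\ij - h^n_\ij) - \frac{g}{2}(h^{n+1}_\ij - h^n_\ij)^2$. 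Dividing by $\delt$ and eliminating $(h^{n+1}_\ij - h^n_\ij)/\delt$ via \eqref{eqn:disc-mss-bal-2} then yields \eqref{eqn:disc-pot-bal} at once, with the squared term being the standard dissipation generated by the implicit treatment of the water height.

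For the kinetic-energy identity, my plan is to multiply \eqref{eqn:disc-x-vel} by $u^n_\ij$, \eqref{eqn:disc-y-vel} by $v^n_\ij$, the mass balance \eqref{eqn:disc-mss-bal-2} by $\frac{1}{2}((u^n_\ij)^2 + (v^n_\ij)^2)$, and sum the three equations. Using the pointwise identity $u^n(u^{n+1} - u^n) = \frac{1}{2}((u^{n+1})^2 - (u^n)^2) - \frac{1}{2}(u^{n+1} - u^n)^2$ and its $v$-analogue, the temporal contributions collapse to $(\K^{n+1}_\ij - \K^n_\ij)/\delt$ plus the positive defect $\frac{h^{n+1}_\ij}{2\delt}[(u^{n+1}_\ij - u^n_\ij)^2 + (v^{n+1}_\ij - v^n_\ij)^2]$, which is the first summand of $R^{n+1}_\ij$. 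Moreover, the cross Coriolis contributions $\omega h^{n+1}_\ij u^n_\ij v^n_\ij$ arising from the two momentum balances carry opposite signs and cancel, leaving only the mixed stabilisation source $\omega v^n_\ij q^{n+1}_\ij - \omega u^n_\ij r^{n+1}_\ij$ on the right-hand side.

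The remaining step, which is the principal obstacle, is the convective bookkeeping. The velocity update contributes a jump-form group $u^n_\ij\lbrack\F^{n+1,-}_\ipj\llbracket u^n\rrbracket_\ipj + \F^{n+1,+}_\imj\llbracket u^n\rrbracket_\imj\rbrack$ while the mass balance contributes a transport term $\frac{(u^n_\ij)^2}{2}(\F^{n+1}_\ipj - \F^{n+1}_\imj)$, and these must reassemble into a conservative flux divergence plus a sign-definite upwind residual. The key discrete identity, which I would obtain by writing $\F^{n+1}_\ipj u^n_\ipj = \F^{n+1,+}_\ipj u^n_\ij + \F^{n+1,-}_\ipj u^n_{i+1,j}$ from the upwind prescription and expanding $(u^n_{i+1,j})^2 - (u^n_\ij)^2 = 2u^n_\ij\llbracket u^n\rrbracket_\ipj + \llbracket u^n\rrbracket_\ipj^2$ together with its analogue at $\imj$, reads
\begin{multline*}
    u^n_\ij\lbrack\F^{n+1,-}_\ipj\llbracket u^n\rrbracket_\ipj + \F^{n+1,+}_\imj\llbracket u^n\rrbracket_\imj\rbrack + \frac{(u^n_\ij)^2}{2}(\F^{n+1}_\ipj - \F^{n+1}_\imj) \\
    = \frac{1}{2}\lbrack\F^{n+1}_\ipj(u^n_\ipj)^2 - \F^{n+1}_\imj(u^n_\imj)^2\rbrack - \frac{1}{2}\lbrack\F^{n+1,-}_\ipj\llbracket u^n\rrbracket_\ipj^2 - \F^{n+1,+}_\imj\llbracket u^n\rrbracket_\imj^2\rbrack.
\end{multline*}
Applied to the $u^n$ and $v^n$ groups in both spatial directions (with $\G^{n+1}$ in place of $\F^{n+1}$ on the horizontal edges), this identity converts the combined convective sum into the conservative flux divergence of \eqref{eqn:disc-ke} and supplies precisely the four upwind dissipative squares appearing in $R^{n+1}_\ij$ as defined in \eqref{eqn:kin-rem}, completing the derivation.
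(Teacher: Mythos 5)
Your proposal is correct and follows essentially the same route as the paper: the exact second-order Taylor (convexity) identity for the quadratic potential energy combined with the mass balance \eqref{eqn:disc-mss-bal-2}, and for the kinetic energy the combination of \eqref{eqn:disc-x-vel} times $u^n_\ij$, \eqref{eqn:disc-y-vel} times $v^n_\ij$, and the mass balance times $\tfrac{1}{2}((u^n_\ij)^2+(v^n_\ij)^2)$, using $(c-d)d = (c^2-d^2-(c-d)^2)/2$ for both the time increments and the upwind fluxes. Your explicit convective reassembly identity is a correct and more detailed rendering of the step the paper delegates to \cite{AA24}.
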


\begin{proof}
    In order to establish \eqref{eqn:disc-pot-bal}, we note that the
    potential energy $\I$ satisfies $\partial_h\I = \phi$,
    $\partial^2_{h}\I = g$. Then, performing a Taylor expansion in
    time for the difference $\I^{n+1}_\ij - \I^n_\ij$ and using the
    balance of the water height \eqref{eqn:disc-mss-bal-2} to simplify
    will yield the desired equation. To obtain \eqref{eqn:disc-ke}, we
    multiply the velocity balance \eqref{eqn:disc-x-vel} by $u^n_\ij$,
    \eqref{eqn:disc-y-vel} by $v^n_\ij$ and add the two. Then, we use
    the identity $(c-d)d = (c^2 - d^2 - (c-d)^2)/2$ to simplify the
    the time derivative terms as well as the flux terms. Finally, we
    multiply \eqref{eqn:disc-mss-bal} by $((u^n_\ij)^2 +
    (v^n_\ij)^2)/2$, add it to the resultant expression and further
    simplify using the definition of the positive and negative part of
    the fluxes to obtain \eqref{eqn:disc-ke}. We refer to \cite{AA24}
    for detailed calculations.
\end{proof}

We are now in a position to prove Theorem \ref{thm:eng-stab}.

\begin{proof}[Proof of Theorem \ref{thm:eng-stab}]
    To begin, we first estimate the remainder term \eqref{eqn:kin-rem}
    which is present in the discrete kinetic energy balance
    \eqref{eqn:disc-ke}. Using the $x$-velocity balance
    \eqref{eqn:disc-x-vel} and the identity $(A+B+C)^2\leq
    3(A^2+B^2+C^2)$ we obtain 
    \begin{gather}
    \label{eqn:kin-rem-est-1}
        \frac{h^{n+1}_\ij}{2\delt}(u^{n+1}_\ij - u^n_\ij)^2 \leq \frac{3\delt}{2 h^{n+1}_\ij}\biggl\lbrack\biggl\lbrace\frac{1}{\delx}\Bigl(\F^{n+1,-}_\ipj\llbracket u^n\rrbracket_\ipj + \F^{n+1,+}_\imj\llbracket u^n\rrbracket_\imj\Bigr) \\
        + \frac{1}{\dely}\Bigl(\G^{n+1,-}_\ijp\llbracket u^n\rrbracket_\ijp
        +\G^{n+1,+}_\ijm\llbracket
        u^n\rrbracket_\ijm\Bigr)\biggr\rbrace^2 +
        (h^{n+1}_\ij)^2(\derx\phi^{n+1}_\ij - \omega v^n_\ij)^2 +
        \omega^2 (r^{n+1}_\ij)^2\biggr\rbrack. \nonumber 
    \end{gather}
    Denoting the entire term enclosed in the curly braces as
    $\mathcal{A}^{n+1}_\ij$, we rewrite it as   
    \begin{gather*}
        \mathcal{A}^{n+1}_\ij = \biggl\lbrack\biggl(\sqrt{\frac{-1}{\delx}\F^{n+1,-}_\ipj}\llbracket u^n\rrbracket_\ipj\biggr)\cdot\biggl(-\sqrt{\frac{-1}{\delx}\F^{n+1,-}_\ipj}\biggr) + \biggl(\sqrt{\frac{1}{\delx}\F^{n+1,+}_\imj}\llbracket u^n\rrbracket_\imj\biggr)\cdot\biggl(\sqrt{\frac{1}{\delx}\F^{n+1,+}_\imj}\biggr) \\
        + \biggl(\sqrt{\frac{-1}{\dely}\G^{n+1,-}_\ijp}\llbracket u^n\rrbracket_\ijp\biggr)\cdot\biggl(-\sqrt{\frac{-1}{\dely}\G^{n+1,-}_\ijp}\biggr) + \biggl(\sqrt{\frac{1}{\dely}\G^{n+1,+}_\ijm}\llbracket u^n\rrbracket_\ijm\biggr)\cdot\biggl(\sqrt{\frac{1}{\dely}\G^{n+1,+}_\ijm}\biggr)\biggr\rbrack^2.
    \end{gather*}
    Next, we use the Cauchy-Schwarz inequality to estimate the above
    expression and obtain 
    \begin{align*}
        \mathcal{A}^{n+1}_\ij \leq &\biggl\lbrack\frac{1}{\delx}\Bigl(\F^{n+1,-}_{\ipj}\llbracket u^n\rrbracket_\ipj^2 - \F^{n+1,+}_{\imj}\llbracket u^n\rrbracket_\imj^2\Bigr)
        + \frac{1}{\dely}\Bigl(\G^{n+1,-}_\ijp\llbracket u^n\rrbracket^2_\ijp - \G^{n+1,+}_\ijm\llbracket u^n\rrbracket^2_\ijm\Bigr)\biggr\rbrack \\
        &\times\biggl\lbrack\frac{1}{\delx}\Bigl(\F^{n+1,-}_\ipj - \F^{n+1,+}_\imj\Bigr) + \frac{1}{\dely}\Bigl(\G^{n+1,-}_\ijp - \G^{n+1,+}_\ijm\Bigr)\biggr\rbrack.
    \end{align*}
    Combining the above with \eqref{eqn:kin-rem-est-1} yields an estimate on $\frac{h^{n+1}_\ij}{2\delt}(u^{n+1}_\ij - u^n_\ij)^2$. Analogously, one obtains a similar estimate for the term $\frac{h^{n+1}_\ij}{2\delt}(v^{n+1}_\ij - v^n_\ij)^2$. These estimates in turn yield 
    \begin{gather}
    \label{eqn:kin-rem-est-2}
        R^{n+1}_\ij\leq\biggl\lbrace\frac{\F^{n+1,-}_\ipj}{2\delx}\Bigl(\llbracket u^n\rrbracket_\ipj^2 + \llbracket v^n\rrbracket_\ipj^2\Bigr) - \frac{\F^{n+1,+}_\imj}{2\delx}\Bigl(\llbracket u^n\rrbracket_\imj^2 + \llbracket v^n\rrbracket_\imj^2\Bigr) \\
        +\frac{\G^{n+1,-}_\ijp}{2\delx}\Bigl(\llbracket u^n\rrbracket_\ijp^2 + \llbracket v^n\rrbracket_\ijp^2\Bigr) - \frac{\G^{n+1,+}_\ijm}{2\delx}\Bigl(\llbracket u^n\rrbracket_\ijm^2 + \llbracket v^n\rrbracket_\ijm^2\Bigr) \biggr\rbrace \nonumber \\
        \times\biggl\lbrace 1 + \frac{3\delt}{h^{n+1}_\ij}\biggl(\frac{1}{\delx}\Bigl(\F^{n+1,-}_\ipj - \F^{n+1,+}_\imj\Bigr)
        + \frac{1}{\dely}\Bigl(\G^{n+1,-}_\ijp - \G^{n+1,+}_\ijm\Bigr)\biggr)\biggr\rbrace + \frac{3}{2}\delt h^{n+1}_\ij\bigl((\derx\phi^{n+1}_\ij - \omega v^n_\ij)^2 \nonumber\\
        +(\dery\phi^{n+1}_\ij + \omega u^n_\ij)^2\bigr) + \frac{3\omega^2\delt}{2h^{n+1}_\ij}\bigl((q^{n+1}_\ij)^2 + (r^{n+1}_\ij)^2\bigr).\nonumber
    \end{gather}
    It is easy to observe that the product of the two terms that are
    enclosed in the curly braces will remain non-positive under the
    restriction 
    \begin{equation}
    \label{eqn:cfl-cond}
        1 + \frac{3\delt}{h^{n+1}_\ij}\biggl(\frac{1}{\delx}\Bigl(\F^{n+1,-}_\ipj - \F^{n+1,+}_\imj\Bigr) + \frac{1}{\dely}\Bigl(\G^{n+1,-}_\ijp - \G^{n+1,+}_\ijm\Bigr)\biggr)\geq 0.
    \end{equation}
    Consequently, we have the following estimate on the remainder term $R^{n+1}_\ij$:
    \begin{gather}
    \label{eqn:kin-rem-est-3}
        R^{n+1}_\ij\leq\frac{3}{2}\delt h^{n+1}_\ij\bigl((\derx\phi^{n+1}_\ij - \omega v^n_\ij)^2 +(\dery\phi^{n+1}_\ij + \omega u^n_\ij)^2\bigr) + \frac{3\omega^2\delt}{2h^{n+1}_\ij}\bigl((q^{n+1}_\ij)^2 + (r^{n+1}_\ij)^2\bigr).
    \end{gather}

    Next, we multiply the potential energy balance \eqref{eqn:disc-pot-bal} with $\abs{K_\ij} = \delx\dely$ and sum over all $\ij$. Similarly, we multiply the kinetic energy balance \eqref{eqn:disc-ke} with $\delx\dely$ and sum over all $\ij$, and this causes the convective flux terms to sum to 0. Then, we add the two resulting expressions and repeatedly use the duality relations \eqref{eqn:dual-rel} to get
    \begin{equation}
    \label{eqn:eng-ineq-2}
        \begin{split}
        \sum_{\ij}\delx\dely\frac{1}{\delt}(E^{n+1}_\ij - E^n_\ij) = &-\sum_\ij\delx\dely q^{n+1}_\ij\bigl(\derx\phi^{n+1}_\ij - \omega v^n_\ij\bigr) - \sum_\ij\delx\dely r^{n+1}_\ij\bigl(\dery\phi^{n+1}_\ij + \omega u^n_\ij\bigr) \\
        & - \frac{g}{2\delt}\sum_{\ij}\delx\dely (h^{n+1}_\ij - h^n_\ij)^2+ \sum_\ij\delx\dely R^{n+1}_\ij.
        \end{split}
    \end{equation}

    Now, we choose the stabilisation terms as 
    \begin{align}
    \label{eqn:stab-terms}
        q^{n+1}_\ij = \eta\delt\bigl(\derx\phi^{n+1}_\ij - \omega v^n_\ij\bigr),\quad r^{n+1}_\ij = \eta\delt\bigl(\dery\phi^{n+1}_\ij + \omega u^n_\ij\bigr),
    \end{align}
    where $\eta>0$. Next, using the estimate \eqref{eqn:kin-rem-est-3}
    for the remainder term $R^{n+1}_\ij$, substituting
    $\derx\phi^{n+1}_\ij - \omega v^n_\ij =
    \frac{q^{n+1}_\ij}{\eta\delt}$, $\dery\phi^{n+1}_\ij + \omega
    u^n_\ij = \frac{r^{n+1}_\ij}{\eta\delt}$ in the inequality
    \eqref{eqn:eng-ineq-2}, we get  
    \begin{equation}
    \label{eqn:eng-ineq-3}
        \begin{split}
            \sum_{\ij}\delx\dely\frac{1}{\delt}(E^{n+1}_\ij - E^n_\ij) \leq &\sum_\ij\delx\dely\,\biggl(\frac{3 h^{n+1}_\ij}{2\eta^2\delt} + \frac{3\omega^2\delt}{2h^{n+1}_\ij} - \frac{1}{\eta\delt}\biggr)[(q^{n+1}_\ij)^2 + (r^{n+1}_\ij)^2] \\
            &- \frac{g}{2\delt}\sum_{\ij}\delx\dely (h^{n+1}_\ij - h^n_\ij)^2.
        \end{split}
    \end{equation}
    Observe that under the conditions,
    \begin{equation}
    \label{eqn:eta-aux-tstep-cond}
        \eta>\frac{3}{2}h^{n+1}_\ij, \quad
        \delt^2\leq\zeta\biggl(\frac{2h^{n+1}_\ij}{3\omega^2\eta^2}\Bigl(\eta - \frac{3}{2}h^{n+1}_\ij\Bigr)\biggr),
    \end{equation}
    for $\zeta\in(0,1)$, we get
    \begin{equation}
    \label{eqn:eng-ineq-4}
        \frac{3 h^{n+1}_\ij}{2\eta^2\delt} + \frac{3\omega^2\delt}{2h^{n+1}_\ij} - \frac{1}{\eta\delt}\leq \frac{(\zeta-1)}{\eta^2\delt}\Bigl(\eta - \frac{3}{2}h^{n+1}_\ij\Bigr)\leq 0
    \end{equation}
    Hence, from \eqref{eqn:eng-ineq-3} and \eqref{eqn:eng-ineq-4}, we get the desired local in-time energy inequality \eqref{eqn:loc-eng-ineq}.
    Next, to prove the global estimate, note that from \eqref{eqn:eng-ineq-3} and \eqref{eqn:eng-ineq-4}, we have for any $0\leq k\leq N-1$
    \begin{gather*}
        \sum_{\ij}\delx\dely(E^{k+1}_\ij - E^k_\ij) + \frac{g}{2}\sum_\ij\delx\dely(h^{k+1}_\ij - h^k_\ij)^2 \\ 
        +\sum_\ij\delx\dely\frac{(1-\zeta)}{\eta^2}\Bigl(\eta - \frac{3}{2}h^{k+1}_\ij\Bigr)\bigl((q^{k+1}_\ij)^2 + (r^{k+1}_\ij)^2\bigr) \leq 0.
    \end{gather*}
    Then, for $1\leq n\leq N$, summing up the above inequality from $k=0$ to $k=n-1$ yields \eqref{eqn:glob-ent-ineq}.
\end{proof}

\section{Well-Balancing}
\label{sec:wb}

Since the stabilisation terms are given by \eqref{eqn:stab-terms}, one
can observe that the balance for the water height, cf.\
\eqref{eqn:disc-mss-bal-2}, is implicit in nature. Therefore, we now
proceed with the proof of the existence of a solution to the present
scheme, i.e.\ Theorem \ref{thm:exis-pos}, under the time-step
restriction \eqref{eqn:suff-tstep}. 

\begin{proof}[Proof of Theorem \ref{thm:exis-pos}]
    Using $h^{n}_\ij = (\phi^n_\ij/g) - b_\ij$, we rewrite the mass
    balance \eqref{eqn:disc-mss-bal-2} as  
    \begin{equation}
    \label{eqn:exist-1}
        \begin{split}
            \phi^{n+1}_\ij - \phi^n_\ij + \delt\derx(\phi^{n+1}u^n)_\ij + \delt\dery(\phi^{n+1}v^n)_\ij - g\delt\derx q^{n+1}_\ij - g\delt\dery r^{n+1}_\ij \\
            = g\delt\derx(bu^n)_\ij + g\delt\dery(bv^n)_\ij
        \end{split}
    \end{equation}
    Next, substituting for the stabilisation terms from
    \eqref{eqn:stab-terms} and rearranging, we obtain the following
    discrete linear elliptic problem for $\phi^{n+1}$:
    \begin{equation}
    \label{eqn:ellpt-prob}
        \begin{split}
            \phi^{n+1}_\ij - g\eta\delt^2\derx(\derx\phi^{n+1})_\ij - g\eta\delt^2\dery(\dery\phi^{n+1})_\ij + \delt\derx(\phi^{n+1}u^n)_\ij + \delt\dery(\phi^{n+1}v^n)_\ij \\
            = \phi^n_\ij + g\delt\derx(bu^n)_\ij + g\delt\dery(bv^n)_\ij -g\omega\eta\delt^2\derx v^n_\ij + g\omega\eta\delt^2\dery u^n_\ij.
        \end{split}
    \end{equation}

    Now, upon substituting for the discrete derivatives and expanding, we see that the matrix associated to the given problem is strictly diagonally dominant under the conditions
    \[
        \frac{\delt}{\delx}\abs{u^n_\ij}<\half,\quad \frac{\delt}{\dely}\abs{v^n_\ij}<\half,
    \]
    which are ensured under the time-step restriction \eqref{eqn:suff-tstep}. Consequently, there exists a unique solution $\phi^{n+1}$ which solves \eqref{eqn:ellpt-prob} and hence, we can obtain $h^{n+1}_\ij = (\phi^{n+1}_\ij/g) - b_\ij$. Furthermore, the velocities $u^{n+1}_\ij$ and $v^{n+1}_\ij$ can now be evaluated explicitly from \eqref{eqn:disc-x-mom-bal}-\eqref{eqn:disc-y-mom-bal} respectively.

    Now, to prove the positivity of the water height, we first note that performing straightforward calculations will yield the following condition from \eqref{eqn:suff-tstep}:
    \begin{equation}
    \label{eqn:exp-time-step}
        \frac{\delt}{\delx}\Bigl(\lvert\F^{n+1,}_\ipj\rvert + \lvert\F^{n+1}_\imj\rvert\Bigr) + \frac{\delt}{\dely}\Bigl(\lvert\G^{n+1}_\ijp\rvert + \lvert\G^{n+1}_\ijm\rvert\Bigr)\leq\frac{h^{n}_\ij}{4}.
    \end{equation}
    In view of the above, observe  
    \begin{equation*}
        h^{n+1}_\ij - \frac{3}{4}h^n_\ij \geq h^{n+1}_\ij - h^n_\ij + \frac{\delt}{\delx}\Bigl(\lvert\F^{n+1,}_\ipj\rvert + \lvert\F^{n+1}_\imj\rvert\Bigr) + \frac{\delt}{\dely}\Bigl(\lvert\G^{n+1}_\ijp\rvert + \lvert\G^{n+1}_\ijm\rvert\Bigr) \geq 0
    \end{equation*}
    Analogously, we can also show that $h^{n+1}_\ij - \frac{5}{4}h^n_\ij \leq 0$
    and thus, we have the relation
    \begin{equation}
    \label{eqn:imp-exp-height}
        \frac{3}{4}h^n_\ij\leq h^{n+1}_\ij\leq \frac{5}{4}h^n_\ij\text{ for each }n=0,\dots,N-1
    \end{equation}
    under \eqref{eqn:exp-time-step}. Hence, if $h^n_\ij > 0$ for each $(\ij)$, we get that $h^{n+1}_\ij >0$ for each $(\ij)$. Using induction, if $h^0_\ij > 0$, then $h^k_\ij > 0$ for each $k=1,\dots, N$. 
\end{proof}

\begin{remark}
\label{rem:exp-choices}
    As a further consequence of the relation \eqref{eqn:imp-exp-height}, the implicit conditions \eqref{eqn:eta-aux-tstep-cond} required for the energy stability can now be made explicit. Indeed, we can choose 
    \[
        \eta > \frac{15}{8}h^n_\ij, \quad
        \delt^2\leq \zeta\biggl(\frac{h^{n}_\ij}{2\omega^2\eta^2}\biggl(\eta - \frac{15}{8}h^n_\ij\biggr)\biggr)
    \]
    to ensure that \eqref{eqn:eta-aux-tstep-cond} holds.
\end{remark}

We can now proceed towards proving the well-balancing property of the scheme. 
\begin{proof}[Proof of Theorem \ref{thm:wb}]
    We suppose that \eqref{eqn:disc-jet-x} holds. Under the given assumptions, the elliptic problem \eqref{eqn:ellpt-prob} reduces to 
    \begin{equation}
    \label{eqn:wb-1}
        \begin{split}
            \phi^{n+1}_\ij - g\eta\delt^2\derx(\derx\phi^{n+1})_\ij - g\eta\delt^2\dery(\dery\phi^{n+1})_\ij + \delt v^n_\ij\dery\phi^{n+1}_\ij \\
            = \phi^n_\ij - g\eta\delt^2\derx(\derx \phi^n)_\ij.
        \end{split}
    \end{equation}
    As $\dery\phi^n_\ij = 0$ under \eqref{eqn:disc-jet-x}, we can rewrite the right-hand side of \eqref{eqn:wb-1} as
    \begin{equation}
    \label{eqn:wb-2}
        \begin{split}
            \phi^{n+1}_\ij - g\eta\delt^2\derx(\derx\phi^{n+1})_\ij - g\eta\delt^2\dery(\dery\phi^{n+1})_\ij + \delt v^n_\ij\dery\phi^{n+1}_\ij \\
            = \phi^n_\ij - g\eta\delt^2\derx(\derx \phi^n)_\ij - g\eta\delt^2\dery(\dery \phi^n)_\ij +\delt v^n_\ij\dery\phi^n_\ij.
        \end{split}
    \end{equation}
    Observe that $\phi^{n+1}_\ij = \phi^n_\ij$ is a solution of \eqref{eqn:wb-2}. Since we have already shown that the above linear problem has a solution, cf. Theorem \ref{thm:exis-pos}, this ensures that $\phi^{n+1}_\ij = \phi^n_\ij$ is the unique solution of \eqref{eqn:wb-2}. Consequently, we obtain $h^{n+1}_\ij = h^n_\ij$, $\dery h^{n+1}_\ij = \dery \phi^{n+1}_\ij = \dery\phi^n_\ij = 0$ and $\derx\phi^{n+1}_\ij = \derx\phi^{n}_\ij$. Next, note that 
    \begin{align*}
        &q^{n+1}_\ij = \eta\delt(\derx\phi^{n+1}_\ij - \omega v^n_\ij) = \eta\delt(\derx\phi^n_\ij - \omega v^n_\ij) = 0,\quad r^{n+1}_\ij = \eta\delt(\dery \phi^{n+1}_\ij + \omega u^n_\ij) = 0.
    \end{align*}
    Thus, the stabilisation terms vanish under steady state conditions. Accordingly,  \eqref{eqn:mss-flux} yields
    \[
        \F^{n+1}_\ipj = \F^{n+1}_\imj = 0, \quad \G^{n+1}_\ijp = \G^{n+1}_\ijm.
    \]
    In view of the above, the $x$-momentum balance \eqref{eqn:disc-x-mom-bal} implies that $u^{n+1}_\ij = u^n_\ij = 0$. Also, the $y$-momentum balance will simplify to 
    \[
        \frac{1}{\delt}(h^{n+1}_\ij v^{n+1}_\ij - h^n_\ij v^n_\ij) + \frac{\G^{n+1}_\ijp}{\dely}(v^n_\ijp - v^n_\ijm) = 0.
    \]
    Since $v^n_\ij$ is constant in the $y$ direction, we readily obtain $v^{n+1}_\ij = v^n_\ij$. This yields that $(h^{n+1}_\ij, u^{n+1}_\ij, v^{n+1}_\ij) = (h^n_\ij, u^n_\ij, v^n_\ij)$ and consequently, 
    \[
        u^{n+1}_\ij = 0,\ \dery h^{n+1}_\ij = 0,\ \dery v^{n+1}_\ij = 0,\ \derx\phi^{n+1}_\ij - \omega v^{n+1}_\ij = 0,
    \]
    which proves the well-balancing property of the scheme.
\end{proof}

\begin{remark}
    Slight modifications to the above proof will also allow us to prove that the present scheme is capable of preserving the lake at rest steady state given by $u = v = 0$ and $\phi =$ constant.
\end{remark}

\section{Consistency and Convergence}
\label{sec:cons-conv}

As stated earlier in \eqref{eqn:h-bnd}, we assume that the water
height remains bounded from above and also from below, away from
zero. Under this assumption, note that we can choose the parameter
$\eta$ as $\eta>\frac{3}{2}\overline{h}$ in order to satisfy the
stability conditions required in Theorem \ref{thm:eng-stab}. Thus,
$\eta$ is now a constant independent of the mesh parameters.  

\subsection{A Priori Estimates}
\label{subsec:apri-est}

In view of the above, \eqref{eqn:glob-ent-ineq} along with
\eqref{eqn:h-bnd} yield the following a priori estimates: 
\begin{align}
    &\sum_{n=0}^{N-1}\big[\lVert{q^{n+1}_\T}\rVert_{L^2(\Omega)}^2 +
      \lVert r^{n+1}_\T\rVert_{L^2(\Omega)}^2\big]\lesssim
      1, \label{eqn:stab-term-est} \\ 
    &\norm{u^n_\T}_{L^2(\Omega)} + \norm{v^n_\T}_{L^2(\Omega)}\lesssim
      1, \label{eqn:vel-est} \\ 
    &\sum_{n=0}^{N-1}\sum_\ij\delx\dely(h^{n+1}_\ij -
      h^n_\ij)^2\lesssim 1. \label{eqn:time-bv} 
\end{align}

\begin{proof}[Proof of Theorem \ref{thm:cons}]
  For $\delta_\T\in (0,\delta_0]$ for a sufficiently small $\delta_0$,
  we assume that $\delt\approx \delta_\T$. 
  Let $\psi\in\Cinf([0,T)\times\Omega)$ and let $\psi^n_\ij =
  \frac{1}{\delx\dely}\iint_{K_\ij}\psi(t^n,\cdot)\dx\,\dy$ denote
  its interpolant on the mesh $\T$. We multiply the mass balance
  \eqref{eqn:disc-mss-bal-2} by $\delt\delx\dely\psi^{n+1}_\ij$ and
  sum over $n = 0$ to $n = N-1$ and over all $(\ij)$ to obtain $ T_1
  + T_2 + T_3 = 0$,  where
  \begin{align}
    &T_1 =
      \sum_{n=0}^{N-1}\delt\sum_\ij\delx\dely\biggl(\frac{h^{n+1}_\ij
      - h^n_\ij}{\delt}\biggr)\psi^{n+1}_\ij, \\ 
    &T_2 = \sum_{n =
      0}^{N-1}\delt\sum_\ij\delx\dely(\derx(h^{n+1}u^n)_\ij +
      \dery(h^{n+1}v^n)_\ij)\psi^{n+1}_\ij, \\ 
    &T_3 = - \sum_{n = 0}^{N-1}\delt\sum_\ij\delx\dely(\derx
      q^{n+1}_\ij + \dery r^{n+1}_\ij)\psi^{n+1}_\ij. 
  \end{align}

  Using summation by parts, we can rewrite $T_1$ as 
  \[
    T_1 = - \sum_{n=0}^{N-1}\delt\sum_\ij\delx\dely
    h^{n}_\ij\biggl(\frac{\psi^{n+1}_\ij -
      \psi^n_\ij}{\delt}\biggr) - \sum_\ij\delx\dely
    h^0_\ij\psi^0_\ij. 
  \]
  Now, using the duality relations \eqref{eqn:dual-rel}, we can
  rewrite $T_2$ and $T_3$ as 
  \begin{align*}
    &T_2 = -\sum_{n=0}^{N-1}\delt\sum_\ij\delx\dely\,h^{n+1}_\ij
      u^n_\ij\derx\psi^{n+1}_\ij -
      \sum_{n=0}^{N-1}\delt\sum_\ij\delx\dely\,h^{n+1}_\ij
      v^n_\ij\dery\psi^{n+1}_\ij, \\ 
    &T_3 =
      \sum_{n=0}^{N-1}\delt\sum_\ij\delx\dely\,q^{n+1}_\ij\derx\psi^{n+1}_\ij
      +
      \sum_{n=0}^{N-1}\delt\sum_\ij\delx\dely\,r^{n+1}_\ij\dery\psi^{n+1}_\ij. 
    \end{align*}
    In view of the above, we get the following simplified expression:
    \begin{equation}
      -\iint\limits_{\Omega} h^0_\T\psi(0,\cdot)\,\dx\,\dy = \sum_{n =
        0}^{N-1}\int\limits_{t^{n}}^{t^{n+1}}\!\!\!\!\iint\limits_{\Omega}\bigl\lbrack
      h^n_\T\Dt\psi + h^{n+1}_\T u^n_\T\Dx\psi + h^{n+1}_\T
      v^n_\T\Dy\psi\bigr\rbrack\,\dx\,\dy\,\dt +
      \mathcal{C}^{mass}_{\T,\delt}, 
    \end{equation}
    where the mass consistency error, $\mathcal{C}^{mass}_{\T,\delt}$, is given by 
    \begin{gather}
    \label{eqn:mss-cons-err}
            \mathcal{C}^{mass}_{\T,\delt} = -T_3 + \iint\limits_{\Omega} h^0_\T(\psi^0_\T - \psi(0,\cdot))\,\dx\,\dy + \sum_{n = 0}^{N-1}\int\limits_{t^{n}}^{t^{n+1}}\!\!\!\!\iint\limits_{\Omega} h^{n}_\T(\eth_t\psi^{n+1}_\T - \Dt\psi)\,\dx\,\dy\,\dt  \\
            +\sum_{n = 0}^{N-1}\int\limits_{t^{n}}^{t^{n+1}}\!\!\!\!\iint\limits_{\Omega} h^{n+1}_\T u^n_\T(\derx\psi^{n+1}_\T - \Dx\psi)\,\dx\,\dy\,\dt + \sum_{n = 0}^{N-1}\int\limits_{t^{n}}^{t^{n+1}}\!\!\!\!\iint\limits_{\Omega} h^{n+1}_\T v^n_\T(\dery\psi^{n+1}_\T - \Dy\psi)\,\dx\,\dy\,\dt \nonumber \\
            =: -T_3 + R_1 + R_2 + R_3 + R_4. \nonumber
    \end{gather}
    Using the a priori estimates, cf. Subsection
    \ref{subsec:apri-est}, the assumption \eqref{eqn:h-bnd} on the
    water height and standard interpolation inequalities,
    cf. \cite{FLM+21a}, we can obtain the following estimates on the
    remainder terms $R_1,\dots, R_4$.   
    \begin{gather*}
      \abs{R_1}\lesssim \delta_\T\norm{\psi}_{C^1},\quad
      \abs{R_2}\lesssim \delt\norm{\psi}_{C^2}, \quad 
      \abs{R_3}\lesssim\delta_\T\norm{\psi}_{C^2}, \quad
      \abs{R_3}\lesssim\delta_\T\norm{\psi}_{C^2}.
    \end{gather*}
    We now estimate $T_3$. To this end,  we use the Holder's
    inequality along with the estimate \eqref{eqn:stab-term-est} and
    obtain 
    \[
        \abs{T_3}\lesssim \sqrt{\delt}\norm{\psi}_{C^2}.
    \]
    This ensures that $T_3\to 0$ as we refine the mesh and hence $\mathcal{C}^{mass}_{\T,\delt}\to 0$ upon passing to the limit.

    Next, we move onto formulating the consistency of the momentum
    balance. Let $\Psi\in\Cinf(\lbrack0,T)\times\Omega)$. We multiply
    \eqref{eqn:disc-x-mom-bal} with $\delt\delx\dely\Psi^{n+1}_\ij$,
    substitute for the fluxes and sum over $n = 0$ to $n = N-1$ and
    all $\ij$ to obtain the relation 
    \begin{equation}
      \label{eqn:xmom-cons-S}
        S_1 + S_2 + S_3 + S_4 + S_5 + S_6 = S_7 + S_8 + S_9,
    \end{equation}
    where 
    \begin{align*}
        &S_1 = \sum_{n = 0}^{N-1}\delt\sum_\ij\delx\dely\biggl(\frac{h^{n+1}_\ij u^{n+1}_\ij - h^n_\ij u^n_\ij}{\delt}\biggr)\Psi^{n+1}_\ij, \\
        &S_2 = \sum_{n = 0}^{N-1}\delt\sum_\ij\delx\dely\biggl(\frac{ \ldblbrace h^{n+1} u^n\rdblbrace_{\ipj}u^n_\ipj - \ldblbrace h^{n+1} u^n\rdblbrace_{\imj} u^n_\imj}{\delx}\biggr)\Psi^{n+1}_\ij, \\
        &S_4 = -\sum_{n = 0}^{N-1}\delt\sum_\ij\delx\dely\biggl(\frac{ \ldblbrace q^{n+1}\rdblbrace_{\ipj}u^n_\ipj - \ldblbrace q^{n+1}\rdblbrace_{\imj} u^n_\imj}{\delx}\biggr)\Psi^{n+1}_\ij, \\
        &S_6 = \sum_{n = 0}^{N-1}\delt\sum_\ij\delx\dely\,gh^{n+1}_\ij\derx h^{n+1}_\ij\Psi^{n+1}_\ij, \quad S_7 = -\sum_{n = 0}^{N-1}\delt\sum_\ij\delx\dely\,gh^{n+1}_\ij\derx b_\ij\Psi^{n+1}_\ij, \\
        &S_8 = \sum_{n = 0}^{N-1}\delt\sum_\ij\delx\dely\,\omega\,h^{n+1}_\ij v^n_\ij \Psi^{n+1}_\ij, \quad S_9 = - \sum_{n=0}^{N-1}\delt\sum_\ij\delx\dely\,\omega\,r^{n+1}_\ij\Psi^{n+1}_\ij. 
    \end{align*}
    In \eqref{eqn:xmom-cons-S}, $S_3$ and $S_5$ are terms that arise
    from the fluxes in the $y$-direction, and are analogous to the
    terms $S_2$ and $S_4$ respectively.   
    As in the case of the mass balance, we can simply rewrite $S_1$ as 
    \[
        S_1 = - \sum_{n=0}^{N-1}\delt\sum_\ij\delx\dely\,h^n_\ij u^n_\ij\biggl(\frac{\Psi^{n+1}_\ij - \Psi^n_\ij}{\delt}\biggr) - \sum_\ij\delx\dely\,h^0_\ij u^0_\ij\Psi^0_\ij.
    \] 
    Re-indexing the summations will allow us to rewrite $S_2$ and $S_4$ as 
    \begin{align*}
        &S_2 = -\sum_{n=0}^{N-1}\delt\sum_\ij\delx\dely\ldblbrace h^{n+1} u^n\rdblbrace_{\ipj} u^n_\ipj\derx^\E\Psi^{n+1}_\ipj, \\
        &S_4 =  \sum_{n=0}^{N-1}\delt\sum_\ij\delx\dely\ldblbrace q^{n+1}\rdblbrace_{\ipj}u^n_\ipj\derx^\E\Psi^{n+1}_\ipj,
    \end{align*}
    and analogously, for $S_3$ and $S_5$.  We rewrite $S_6$ as
    \begin{gather*}
        S_6 = \sum_{n=0}^{N-1}\delt\sum_{\ij}\delx\dely\derx\biggl(\half g(h^{n+1})^2\biggr)_\ij\Psi^{n+1}_\ij \\
        - \sum_{n = 0}^{N-1}\delt\sum_\ij\delx\dely\biggl(\frac{g(h^{n+1}_{i+1,j} - h^{n+1}_\ij)^2}{4\delx} - \frac{g(h^{n+1}_{i-1,j} - h^{n+1}_\ij)^2}{4\delx} \biggr)\Psi^{n+1}_\ij.
    \end{gather*}
    Denoting $p^{n+1}_\ij = \half g (h^{n+1}_\ij)^2$, we use the
    discrete grad-div duality \eqref{eqn:dual-rel} in the above to simplify the
    first term, and rewrite the second term after re-indexing the
    summation to obtain 
    \[
        S_6 = -\sum_{n=0}^{N-1}\delt\sum_{\ij}\delx\dely p^{n+1}_\ij\derx\Psi^{n+1}_\ij + \sum_{n=0}^{N-1}\delt\sum_{\ij}\delx\dely \frac{(h^{n+1}_{i+1, j} - h^{n+1}_\ij)^2}{4}\derx^\E\Psi^{n+1}_\ipj.
    \]
    Thus, we obtain the following simplified expression:
    \begin{gather}
        -\iint\limits_{\Omega}h^0_\T u^0_\T \Psi(0,\cdot)\,\dx\,\dy = \sum_{n = 0}^{N-1}\int\limits_{t^{n}}^{t^{n+1}}\!\!\!\!\iint\limits_{\Omega} \lbrack h^n_\T u^n_\T\Dt\Psi + m^{n+1}_{x,\D^{x}} u_{\D^{x}}^{n}\Dx\Psi + m^{n+1}_{y, \D^{y}} u^n_{\D^{y}}\Dy\Psi \\
        + p^{n+1}_\T\Dx\Psi\rbrack\,\dx\,\dy\,\dt + \sum_{n = 0}^{N-1}\int\limits_{t^{n}}^{t^{n+1}}\!\!\!\!\iint\limits_{\Omega}\lbrack-gh^{n+1}_\T\Dx b\,\Psi + \omega h^{n+1}_\T v^n_\T\Psi\rbrack\,\dx\,\dy\,\dt + \mathcal{C}^{mom,x}_{\T,\delt} = 0, \nonumber
    \end{gather}
    where $m^{n+1}_{x,\D^x}$ (resp.\ $m^{n+1}_{y,\D^y}$) denotes the reconstruction of the $x$-component (resp. $y$-component) of the momentum on the dual grid $\D^x$ (resp.\ $\D^y$) and analogously for $u^n_{\D^x}$ and $u^n_{\D^y}$. The reconstructions are defined as 
    \begin{gather*}
        (m^{n+1}_{x, \D^x})_\ipj = \ldblbrace h^{n+1} u^n\rdblbrace_\ipj,\quad (m^{n+1}_{y, \D^y})_\ijp = \ldblbrace h^{n+1} v^n\rdblbrace_\ijp, \\
        (u^n_{\D^x})_{\ipj} = u^n_\ipj, \quad (u^n_{\D^y})_{\ijp} = u^n_\ijp. 
    \end{gather*}
    The consistency error $\mathcal{C}^{mom,x}_{\T,\delt}$ is defined as
    \begin{gather}
    \label{eqn:mom-cons-err}
        \mathcal{C}^{mom,x}_{\T,\delt} = S_4 + S_5 + S_9 + \iint\limits_\Omega h^0_\T u^0_\T(\Psi^0_\T - \Psi(0,\cdot))\,\dx\,\dy + \sum_{n = 0}^{N-1}\int\limits_{t^{n}}^{t^{n+1}}\!\!\!\!\iint\limits_{\Omega} h^n_\T u^n_\T(\eth_t\Psi^{n+1}_\T - \Dt\Psi)\,\dx\,\dy\,\dt \nonumber \\
        + \sum_{n = 0}^{N-1}\int\limits_{t^{n}}^{t^{n+1}}\!\!\!\!\iint\limits_{\Omega}\lbrack m^{n+1}_{x,\D^{x}}u^n_{\D^x}(\derx^\E\Psi^{n+1}  -\Dx\Psi) + m^{n+1}_{y,\D^{y}}u^n_{\D^y}(\dery^\E\Psi^{n+1} - \Dy\Psi)\rbrack\,\dx\,\dy\,\dt \nonumber \\
        + \sum_{n = 0}^{N-1}\int\limits_{t^{n}}^{t^{n+1}}\!\!\!\!\iint\limits_{\Omega} p^{n+1}_\T(\derx\Psi^{n+1}_\T - \Dx\Psi)\,\dx\,\dy\,\dt -  \sum_{n = 0}^{N-1}\int\limits_{t^{n}}^{t^{n+1}}\!\!\!\!\iint\limits_{\Omega} g h^{n+1}_\T(\Psi^{n+1}_\T \derx b_\T  - \Psi\Dx b)\,\dx\,\dy\,\dt \\
    +\sum_{n = 0}^{N-1}\int\limits_{t^{n}}^{t^{n+1}}\!\!\!\!\iint\limits_{\Omega} \omega h^{n+1}_\T u^n_\T(\Psi^{n+1}_\T - \Psi)\,\dx\,\dy\,\dt -\sum_{n=0}^{N-1}\delt\sum_{\ij}\delx\dely \frac{(h^{n+1}_{i+1, j} - h^{n+1}_\ij)^2}{4}\derx^\E\Psi^{n+1}_\ipj \nonumber \\
    =: S_4 + S_5 + S_6 + Q_1 + Q_2 + Q_3 + Q_4 + Q_5 + Q_6 + Q_7. \nonumber
    \end{gather}
    The terms $Q_1,\dots,Q_6$ can all be estimated using the a priori estimates obtained in Subsection \ref{subsec:apri-est}, the assumption on the water height \eqref{eqn:h-bnd}, the stability of the reconstruction operator \eqref{eqn:recon-stab} and standard interpolation inequalities as follows:
    \begin{align*}
        &\abs{Q_1}\lesssim\delta_\T\norm{\Psi}_{C^1}, \quad \abs{Q_2}\lesssim\delt\norm{\Psi}_{C^2}, \quad \abs{Q_3}\lesssim\delta_\T\norm{\Psi}_{C^2}, \\
        &\abs{Q_4}\lesssim\delta_\T\norm{\Psi}_{C^2}, \quad \abs{Q_5}\lesssim\delta_\T\norm{b}_{W^{2,\infty}}\norm{\Psi}_{C^1}, \quad\abs{Q_6}\lesssim \delta_\T\norm{\Psi}_{C^1}.
    \end{align*}
    Now, to estimate the terms $S_4$ and $S_5$, we utilize \eqref{eqn:recon-stab} along with Holder's inequality and the a priori estimates from Subsection \ref{subsec:apri-est} to obtain
    \[
        \abs{S_4 + S_5}\lesssim\sqrt{\delt}\norm{\Psi}_{C^2}.
    \]
    Using \eqref{eqn:stab-term-est}, we estimate $S_9$ as
    \[
        \abs{S_9}\lesssim\sqrt{\delt}\norm{\Psi}_{L^\infty}.
    \]
    In order to estimate the term $Q_7$, we use \eqref{eqn:wk-bv-assmp} and observe that
    \[
        \sum_{n=0}^{N-1}\delt\sum_{\ij}\delx\dely (h^{n+1}_{i+1,j} - h^{n+1}_\ij)^2\lesssim \delx\lesssim\delta_\T.
    \]
   From the above, we readily obtain
    \[
        \abs{Q_7}\lesssim\delta_\T\norm{\Psi}_{C^2}.
    \]
    This then yields $\mathcal{C}^{mom,x}_{\T,\delt}\to 0$ as we refine the mesh.

   Analogously, for $\Phi\in\Cinf(\lbrack 0, T)\times\Omega)$, we obtain the consistency for the $y$-momentum balance which is given by 
    \begin{gather}
        -\iint\limits_{\Omega}h^0_\T v^0_\T \Phi(0,\cdot)\,\dx\,\dy = \sum_{n = 0}^{N-1}\int\limits_{t^{n}}^{t^{n+1}}\!\!\!\!\iint\limits_{\Omega} \lbrack h^n_\T v^n_\T\Dt\Phi + m^{n+1}_{x,\D^{x}} v_{\D^{x}}^{n}\Dx\Phi + m^{n+1}_{y, \D^{y}} v^n_{\D^{y}}\Dy\Phi \\
        + p^{n+1}_\T\Dy\Phi\rbrack\,\dx\,\dy\,\dt + \sum_{n = 0}^{N-1}\int\limits_{t^{n}}^{t^{n+1}}\!\!\!\!\iint\limits_{\Omega}\lbrack-gh^{n+1}_\T\Dy b\,\Phi - \omega h^{n+1}_\T u^n_\T\Phi\rbrack\,\dx\,\dy\,\dt + \mathcal{C}^{mom,y}_{\T,\delt} = 0, \nonumber
    \end{gather}
    where the velocity reconstructions are defined as 
    \begin{gather*}
        (v^n_{\D^x})_{\ipj} = v^n_\ipj, \quad (v^n_{\D^y})_{\ijp} = v^n_\ijp,
    \end{gather*}
    and the consistency error $\mathcal{C}^{mom,y}_{\T,\delt}\to 0$ as the mesh parameters vanish.

\end{proof}

Finally, we proceed towards the proof of the convergence of the scheme.

\begin{proof}[Proof of Theorem \ref{thm:conv}]

  For a sequence of space-time discretizations $(\T^\k,\delt^\k)$
  such that $\delta_{\T^\k}\to 0$ as $k\to\infty$, let $h^\k, \,
  m_x^\k = h^\k u^\k$ and $ m_y^\k = h^\k v^\k$ denote the sequence
  of discrete numerical solutions, $E^\k$ denote the corresponding
  sequence of the total energies and let $b^\k = b_{\T^\k}$. As a
  consequence of the density bound \eqref{eqn:h-bnd} and the the
  global energy estimate \eqref{eqn:glob-ent-ineq}, we can recover
  the following uniform bounds: 
  \begin{subequations}
    \label{eqn:conv-est}
        \begin{gather}
            h^\k,\, \half g(h^\k)^2, gb^\k h^\k \in L^\infty(\odom), \label{eqn:h-conv-est} \\
            m_x^\k,\,m_y^\k\in L^\infty(0,T;L^2(\Omega)), \label{eqn:mom-conv-est} \\
            \frac{(m_x^\k)^2}{h^\k},\, \frac{(m_y^\k)^2}{h^\k},\,\frac{m_x^\k m_y^\k}{h^\k}\in L^\infty(0,T;L^1(\Omega)), \label{eqn:mom-flx-conv-est}\\
            E^\k\in L^\infty(0,T;L^1(\Omega)). \label{eqn:eng-conv- est}
        \end{gather}
    \end{subequations}

    Consequently, invoking the fundamental theorem of Young measures, cf.\ \cite{FLM+21a,Ped97}, the sequence
    $(h^\k, m_x^\k, m_y^\k)_{k}$ will generate (passing to a subsequence if needed) a Young measure \\
    $\mathcal{V} = \lbrace\V\rbrace_{(t,x,y)\in\odom}\in L^\infty_{weak-*}(\odom,\Pro(\F))$ such that
    \begin{subequations}
    \label{eqn:main-conv}
        \begin{gather}
            h^\k\weakstar\langle\V;\th\rangle\text{ in }L^\infty(\odom), \\
            m^\k_x\weakstar\langle\V;\tmx\rangle\text{ and } m^\k_y\weakstar\langle\V;\tmy\rangle\text{ in }L^\infty(0,T;L^2(\Omega)),
        \end{gather}
    \end{subequations}
    as $k\to\infty$.

    Now, given the bounds \eqref{eqn:h-conv-est}, we can deduce that the sequences $\bigl(\half g (h^\k)^2\bigr)_{k\in\N}$ and $(gb^\k h^\k)_{k\in\N}$ have weakly-* convergent subsequences in $L^\infty((0,T)\times\Omega)$. Consequently, we get
        \begin{gather}
            \half g(h^\k)^2\weakstar\biggl\langle\V;\half g\th^2\biggr\rangle,\, gb^\k h^\k\weakstar\langle\V;gb\th\rangle\text{ in }L^\infty(\odom), \label{eqn:h-con}
        \end{gather}
    Next, the bounds given in \eqref{eqn:mom-flx-conv-est} are not strong enough to guarantee the existence of a weak limit in $L^\infty(0,T; L^1(\Omega))$ for the sequence of functions $(\frac{(m_x^\k)^2}{h^\k})_{k\in\N},\, (\frac{(m_y^\k)^2}{h^\k})_{k\in\N},\,(\frac{m_x^\k m_y^\k}{h^\k})_{k\in\N}$ . However, we can still deduce that the aforementioned sequences will have weakly-* convergent subsequences in $L^\infty(0,T; \M(\Omega))$ due to the embedding $L^1(\Omega)\hookrightarrow \M(\Omega)$. Hence, we obtain 
    \begin{gather}
    \label{eqn:non-lin-conv}
        \frac{(m_x^\k)^2}{h^k}\weakstar \frac{\overline{m_x^2}}{h},\, \frac{(m_y^\k)^2}{h^k}\weakstar \frac{\overline{m_y^2}}{h},\, \frac{m_x^\k m_y^k}{h^k}\weakstar \frac{\overline{m_x m_y}}{h}\ \text{in}\ L^\infty(0,T;\M(\Omega)).
    \end{gather}
    Similarly, for the sequence of total energies $(E^\k)_{k\in\N}$, we obtain
    \begin{equation}
    \label{eqn:enrg-conv}
        E^\k \weakstar\half\overline{\frac{m_x^2}{h}} + \half\overline{\frac{m_y^2}{h}} + \biggl\langle\V;\half g\th^2 + gb\th\biggr\rangle\text{ in }L^\infty(0,T;\M(\overline{\Omega})).
    \end{equation}
    as a consequence of \eqref{eqn:h-con} and \eqref{eqn:non-lin-conv}.
    Next, we introduce the following defect measures
    \begin{subequations}
    \label{eqn:def-meas}
        \begin{gather}
            \mathfrak{E}_{cd} = \half\overline{\frac{m_x^2}{h}} + \half\overline{\frac{m_y^2}{h}} - \biggl\langle\V;\half\frac{\tmx^2}{h} + \half\frac{\tmy^2}{h}\biggr\rangle, \\
            \mathfrak{R}^x_{cd} = \overline{\frac{m_x^2}{h}} - \biggl\langle\V;\frac{\tmx^2}{h}\biggr\rangle,\, \mathfrak{R}^y_{cd} = \overline{\frac{m_y^2}{h}} - \biggl\langle\V;\frac{\tmy^2}{h}\biggr\rangle,\, \mathfrak{R}^{x,y}_{cd} = \overline{\frac{m_x m_y}{h}} - \biggl\langle\V;\frac{\tmx\tmy}{h}\biggr\rangle
        \end{gather}
    \end{subequations}
    and we set $\mathfrak{R}_{cd} = \begin{bmatrix} \mathfrak{R}_{cd}^x & \mathfrak{R}^{x,y}_{cd} \\ \mathfrak{R}^{x,y}_{cd} & \mathfrak{R}^y_{cd} \end{bmatrix}$. Now, the non-negativity of the measure $\mathfrak{E}_{cd}$ is a direct consequence of \cite[Corollary 5.2]{FLM+21a}.  
    Thus, we can rewrite the convergences presented in \eqref{eqn:non-lin-conv}-\eqref{eqn:enrg-conv} as
    \begin{gather*}
        \frac{(m_x^\k)^2}{h}\weakstar\biggl\langle\V;\frac{\tmx^2}{\th}\biggr\rangle + \mathfrak{R}^x_{cd},\quad \frac{(m_y^\k)^2}{h}\weakstar\biggl\langle\V;\frac{\tmy^2}{\th}\biggr\rangle + \mathfrak{R}^y_{cd} \\
        \frac{m_x^\k m_y^\k}{h}\weakstar\biggl\langle\V;\frac{\tmx\tmy}{\th}\biggr\rangle + \mathfrak{R}^{x,y}_{cd}, \quad E^\k\weakstar\biggl\langle\half\frac{\tmx^2}{h} + \half\frac{\tmy^2}{h} + \half g\th^2 + gb\th\biggr\rangle + \mathfrak{E}_{cd}.
    \end{gather*}
    Accordingly, we use \eqref{eqn:conv-est}-\eqref{eqn:def-meas} along with Lemma \ref{lem:mesh-recon-conv} to pass to the limit $k\to\infty$ in the consistency formulation \eqref{eqn:cons-mss-bal}, \eqref{eqn:cons-xmom-bal} and \eqref{eqn:cons-ymom-bal} to see that $\mathcal{V}$ and $\mathfrak{R}_{cd}$ will satisfy \eqref{eqn:dmv-mass}-\eqref{eqn:dmv-y-mom}. Also, from \eqref{eqn:glob-ent-ineq}, it follows that 
    \[
        \iint_\Omega E^\k(t,\cdot)\,\dx\,\dy \leq \iint_\Omega E^\k(0,\cdot)\,\dx\,\dy
    \]
    for a.e.\ $t\in (0,T)$. Passing to the limit above and using \eqref{eqn:enrg-conv} will yield \eqref{eqn:eng-ineq-dmv}. Also, observe that $\mathrm{tr}(\mathfrak{R}_{cd}) = 2\mathfrak{E}_{cd}$ which yields \eqref{eqn:def-comp-dmv}, and this completes the proof.
        
\end{proof}

\section{Numerical Results}
\label{sec:num-res}

In this section, we present numerical results for the quasi
one-dimensional (1D) and 2D systems. The 1D RSW system is given by:    
\begin{subequations}
\label{eqn:1d-rsw}
  \begin{gather}
    \Dt h + \Dx(hu) = 0, \\
    \Dt(hu) + \Dx(hu^2) + h\Dx\phi = \omega hv, \\
    \Dt(hv) + \Dx(huv) = -\omega hu. 
  \end{gather}
\end{subequations}
Throughout, we consider a uniform grid with either $k$-cells for a 1D
problem, or a $k\times k$ mesh for a 2D problem.   

\subsection{Well-Balancing Test Cases}
This section is devoted to demonstrating the well-balancing property
of the scheme. We consider four test problems. The first corresponds
to the classical lake-at-rest steady state, while the remaining three
involve steady states of the jet in a rotating frame
(cf. \eqref{eqn:jet-x}–\eqref{eqn:jet-y}), with progressively more
complex configurations that include both flat and non-flat bottom
topographies. 

For all four cases, the computations are carried out up to a final
time of $T = 10$. The $L^2$-errors of the potential $\phi$ and the
horizontal velocities $u$ and $v$ are reported in Table
\ref{tab:wb-err}. As expected, the proposed scheme successfully
preserves the steady states to high accuracy. 

\subsubsection*{Test 1 (\cite{BAL+14}):} The computational domain is
$\Omega = [0, 10]^2$ with $\omega = g = 1$ and we consider a $100\times 100$ 
grid. The initial data is as follows: 
\begin{gather}
  h(0,x,y) + b(x,y) = 10,\quad b(x,y) = 5 \exp(-0.4 ((x-5)^2 + (y-5)^2)), \\
  u(0,x,y) = v(0,x,y) = 0.
\end{gather}

\subsubsection*{Test 2 (\cite{CLP08}):} The computational domain is
$\Omega = \lbrack -0.5, 0.5\rbrack$ with $\omega = 2$, $g = 9.81$ and
we consider a grid with $1000$ cells. We consider equilibrium boundary
conditions and the initial data is as follows: 
\begin{gather}
    h(0, x) = \frac{4}{g} +\biggl(\frac{\omega}{g}\biggr)x,\quad
    b\equiv 0, \quad u(0,x) = 0,\quad v(0,x) = 1. 
\end{gather}

\subsubsection*{Test 3 (\cite{CDK+18}):} The computational domain is
$\Omega = \lbrack -5, 5\rbrack$ with $\omega = 10$, $g = 1$ and we
consider a grid with $1000$ cells. We consider equilibrium boundary
conditions and the initial data is as follows: 
\begin{gather}
  h(0, x) = \frac{2}{g} - \exp(-x^2),\quad b\equiv 0,\quad u(0,x) =
  0,\quad v(0,x) = \frac{2g}{\omega}x\exp(-x^2). 
\end{gather}

\subsubsection*{Test 4 (\cite{CDK+18}):} The computational domain is
$\Omega = \lbrack -5, 5\rbrack$ with $\omega = 1$, $g = 1$ and we
consider a grid with $1000$ cells. We consider periodic boundary
conditions and the initial data is as follows: 
\begin{gather}
    h(0, x) = 1,\quad b(x) =
    \frac{\omega}{g}\sin\biggl(\frac{\pi}{5}x\biggr),\quad u(0,x) =
    0,\quad v(0,x) = \frac{\pi}{5}\cos\biggl(\frac{\pi}{5}x\biggr). 
\end{gather}

\begin{table}[htpb]
    \centering
    \begin{tabular}{c|c|c|c}
         & $L^2$-error in $\phi$ & $L^2$-error in $u$ & $L^2$-error in $v$ \\
    \hline
    Test 1 & $1.76\times10^{-16}$ & $1.44\times 10^{-16}$ & $2.08\times 10^{-16}$ \\
    \hline
    Test 2 & $3.61\times 10^{-15}$ & $3.21\times 10^{-14}$ & $2.12\times 10^{-15}$\\
    \hline
    Test 3 & $2.76\times 10^{-15}$& $1.01\times 10^{-13}$ & $1.15\times 10^{-14}$ \\
    \hline
    Test 4 & $2.35\times 10^{-15}$& $1.12\times 10^{-13}$ & $2.21\times 10^{-14}$\\
    \end{tabular}
    \caption{Results of the well-balancing case studies}
    \label{tab:wb-err}
\end{table}

\subsection{Rossby Adjustment on an Open Domain (\cite{BLZ04,
    CDK+18})} 

We now consider the Rossby adjustment problem, which consists of
superimposing a perturbation in the $y$-velocity onto a stationary
state. Subsequently, shocks form on the right-moving and left-moving
fronts. The computational domain is $\Omega = [-8,12],$ the
topography is flat ($b \equiv 0$), and we set $\omega = g =
1$. Extrapolation boundary conditions are imposed on both sides of the
domain. The initial data are as follows: 
\begin{equation}
  h(0,x) = 1, \quad u(0,x) = 0, \quad v(0,x) = \frac{2(1 +
    \tanh(2x+2))(1-\tanh(2x-2))}{(1+\tanh(2))^2}. 
\end{equation}
We compute the solution for various times on a grid with $1000$ cells
and present the profiles of the water height $h$ in Figure
\ref{fig:ra-dn}.  

Upon computation, we observe pre- and post-shock oscillations (shown
in blue) in the water height $h$, arising from the non-conservative
discretization of the pressure gradient; see also \cite{PV16} for
analogous considerations. To suppress these oscillations, we introduce
additional stabilization terms proportional to $\partial^2_{xx}(hu)$
and $\partial^2_{yy}(hv)$ in the $x$- and $y$-momentum balances,
respectively, following the approach of \cite{ADG+25}. Accordingly, in
the $x$-momentum balance, the pressure term is modified as follows: we
replace $h^{n+1}_\ij\derx\phi^{n+1}_\ij$ by
$h^{n+1}_\ij(\derx\phi^{n+1}_\ij - \derx^\T\Lambda^{n+1}_\ij)$, where   
\begin{gather}
  \derx^\T\Lambda^{n+1}_\ij = \frac{\Lambda^{n+1}_{\ipj} -
    \Lambda^{n+1}_\imj}{\delx},\quad \Lambda^{n+1}_{\ipj} =
  \frac{g\alpha\delt}{\beta_\T}\biggl(\frac{h^{n+1}_{i+1,j} u^n_{i+1,
      j} - h^{n+1}_\ij u^n_\ij}{\delx}\biggr). 
\end{gather}
Similarly, the term $h^{n+1}_\ij\dery\phi^{n+1}_\ij$ in the
$y$-momentum balance is replaced by $h^{n+1}_\ij(\dery\phi^{n+1}_\ij -
\dery^\T\Theta^{n+1}_\ij)$ where 
\begin{gather}
  \dery^\T\Theta^{n+1}_\ij = \frac{\Theta^{n+1}_{\ijp} -
    \Theta^{n+1}_\ijm}{\dely},\quad \Theta^{n+1}_{\ijp} =
  \frac{g\alpha\delt}{\beta_\T}\biggl(\frac{h^{n+1}_{i,j+1} v^n_{i,
      j+1} - h^{n+1}_\ij v^n_\ij}{\dely}\biggr). 
\end{gather}
Here, $\beta_\T = \biggl(\dfrac{2}{\delx} +
\dfrac{2}{\dely}\biggr)^{-1}$ and $\alpha > 0$.  

In our computations, we set $\alpha = 1$. The added diffusion
effectively damps out the oscillations, as illustrated by the red
curve in the figure. 
\begin{figure}[htpb]
  \centering
  \includegraphics[height=0.25\textheight]{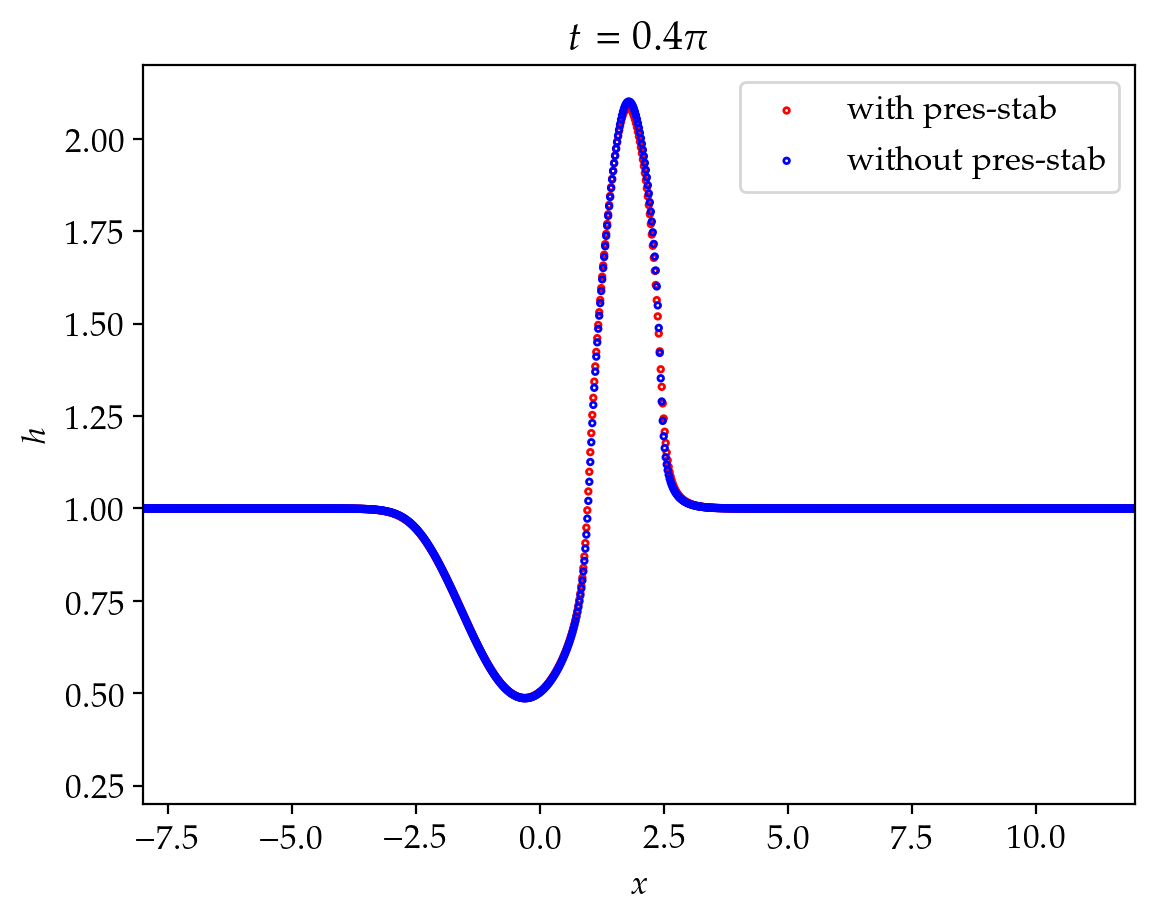}
  \includegraphics[height=0.25\textheight]{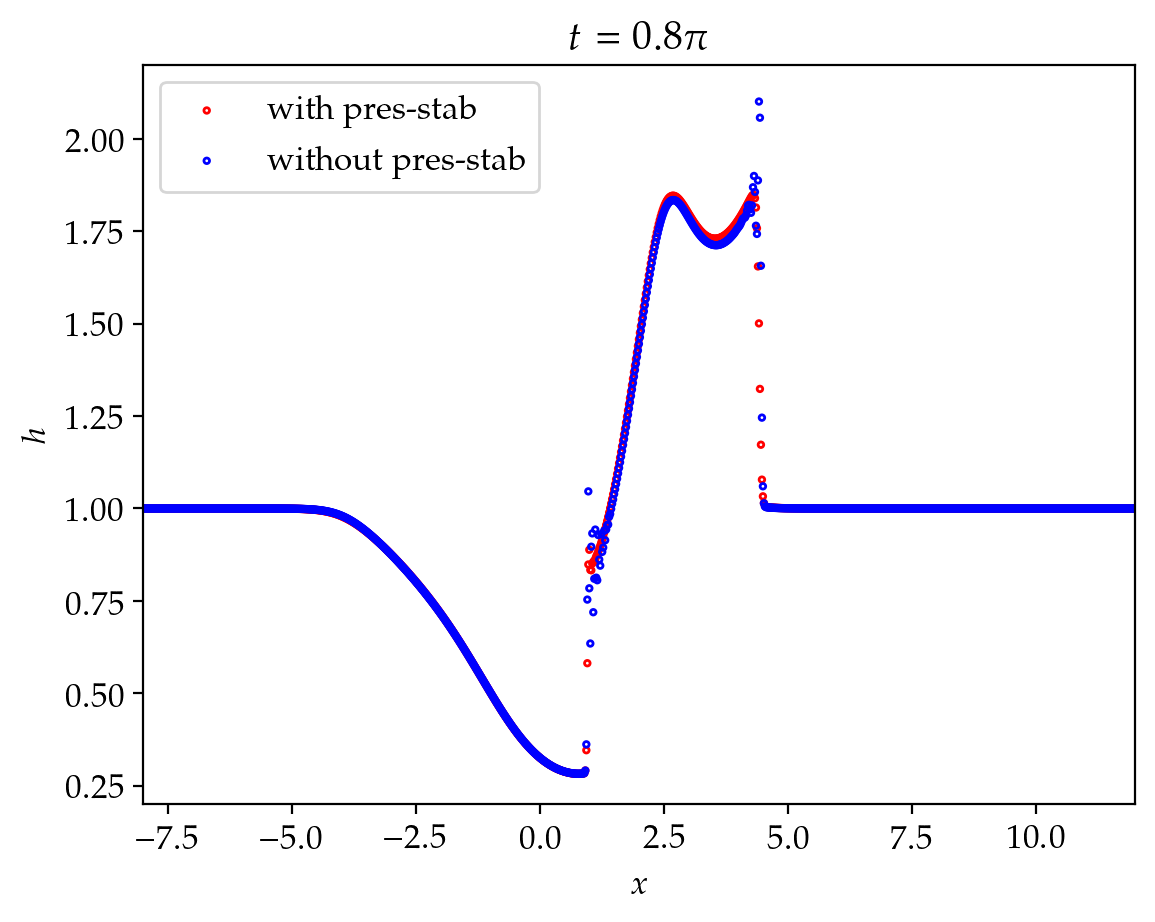}
  \includegraphics[height=0.25\textheight]{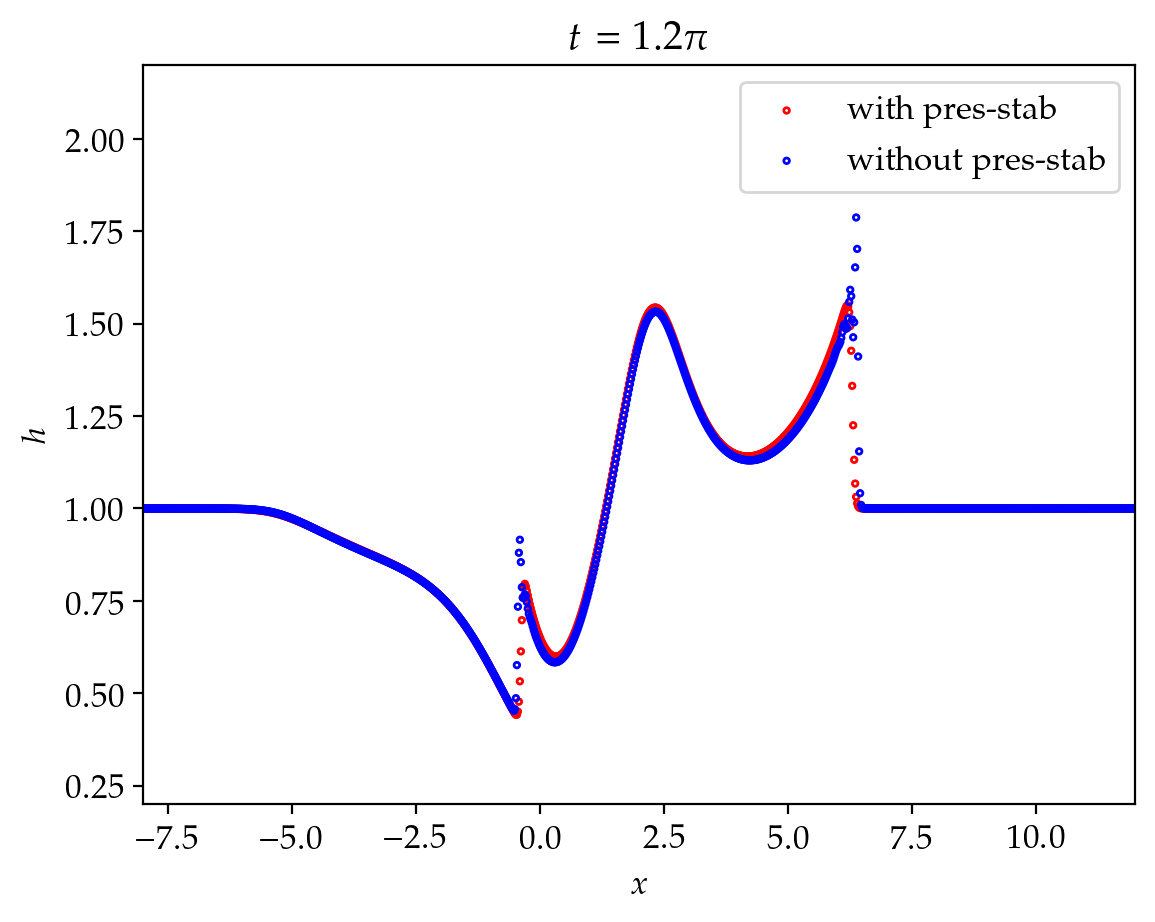}
  \includegraphics[height=0.25\textheight]{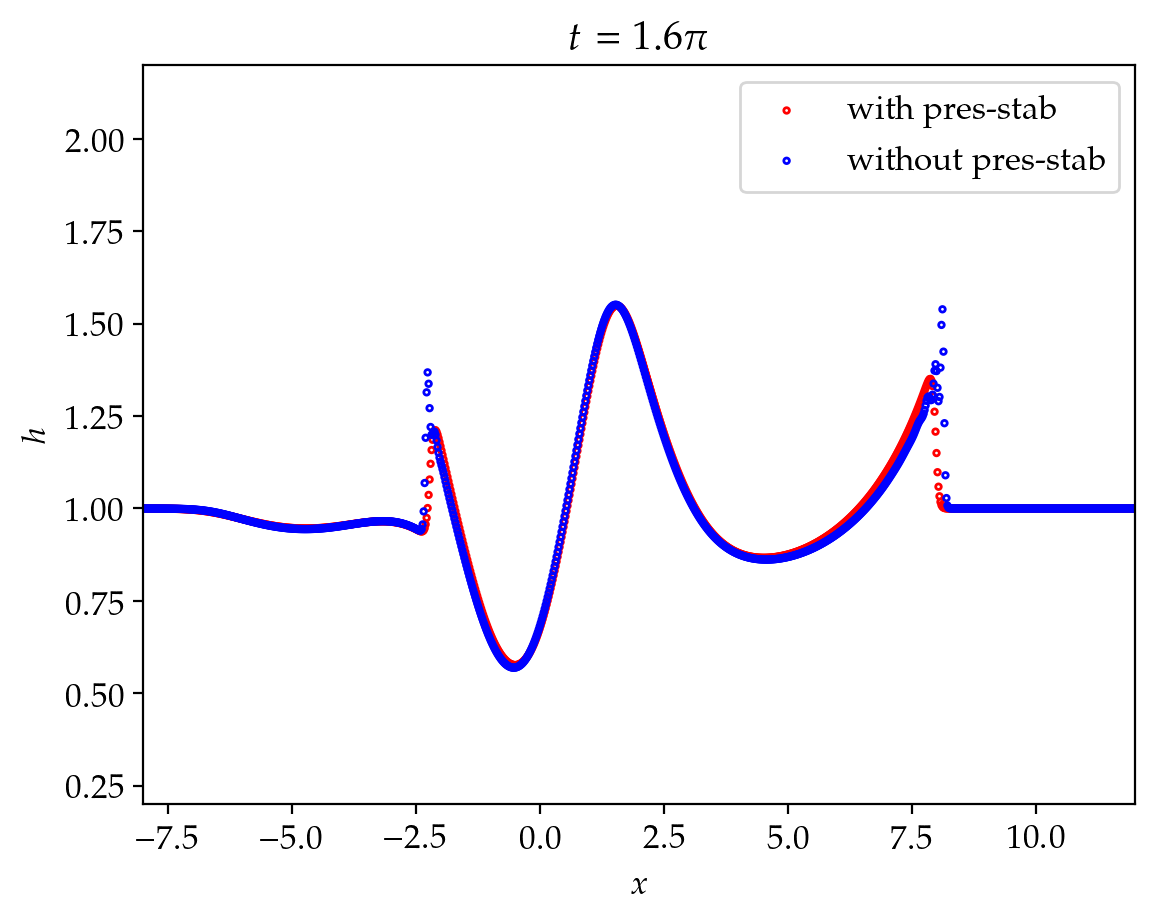}
  \includegraphics[height=0.25\textheight]{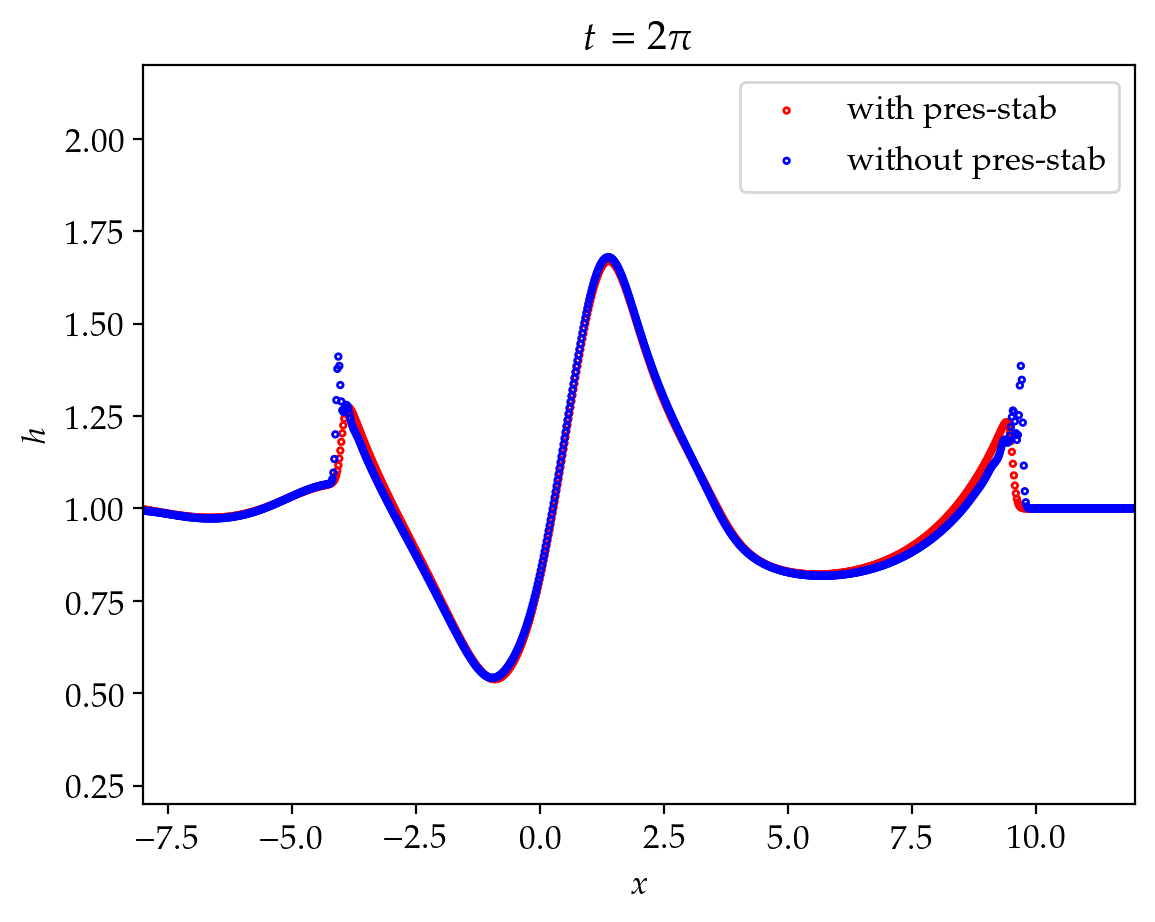}
  \caption{The water height $h$ at various times of the Rossby adjustment problem.}
  \label{fig:ra-dn}
\end{figure}

\begin{remark}
  We emphasize that the stabilization terms added to the scheme do not
  compromise its well-balancing property. This is because the
  additional stabilization appears only in the $x$- and $y$-momentum
  balances, leaving the elliptic problem \eqref{eqn:ellpt-prob}
  unaffected. Moreover, under the discrete analogues of
  \eqref{eqn:jet-x} or \eqref{eqn:jet-y}, we observe that
  $\Lambda^{n+1}_\ipj = \Theta^{n+1}_\ijp = 0$ in both cases, which
  leads to the same conclusions as in Theorem \ref{thm:wb}. 
\end{remark}

\subsection{Stationary State in Space (\cite{DM22})}

The computational domain is $\Omega = [0,1]$, with a flat bottom ($b
\equiv 0$) and parameters $\omega = g = 1$. An exact spatially
constant solution of the RSW system is given by 
\begin{gather}
  h(t,x) = h_0, \quad u(t,x) = u_0\cos(\omega t) + v_0\sin(\omega
  t),\quad v(t,x) = v_0\cos(\omega t) - u_0\sin(\omega t). 
\end{gather}
We consider a final time of $T = 1$ with periodic boundary conditions
and the above solution as initial data with $h_0 = u_0 = v_0 = 1$. The
computations are carried out on a grid with 1000 cells, and Figure
\ref{fig:des-ex-soln} presents a comparison between the exact and
computed momenta over time. The numerical solutions show excellent
agreement with the exact solution, with the corresponding curves
nearly indistinguishable.  
\begin{figure}
  \centering
  \includegraphics[height = 0.25\textheight]{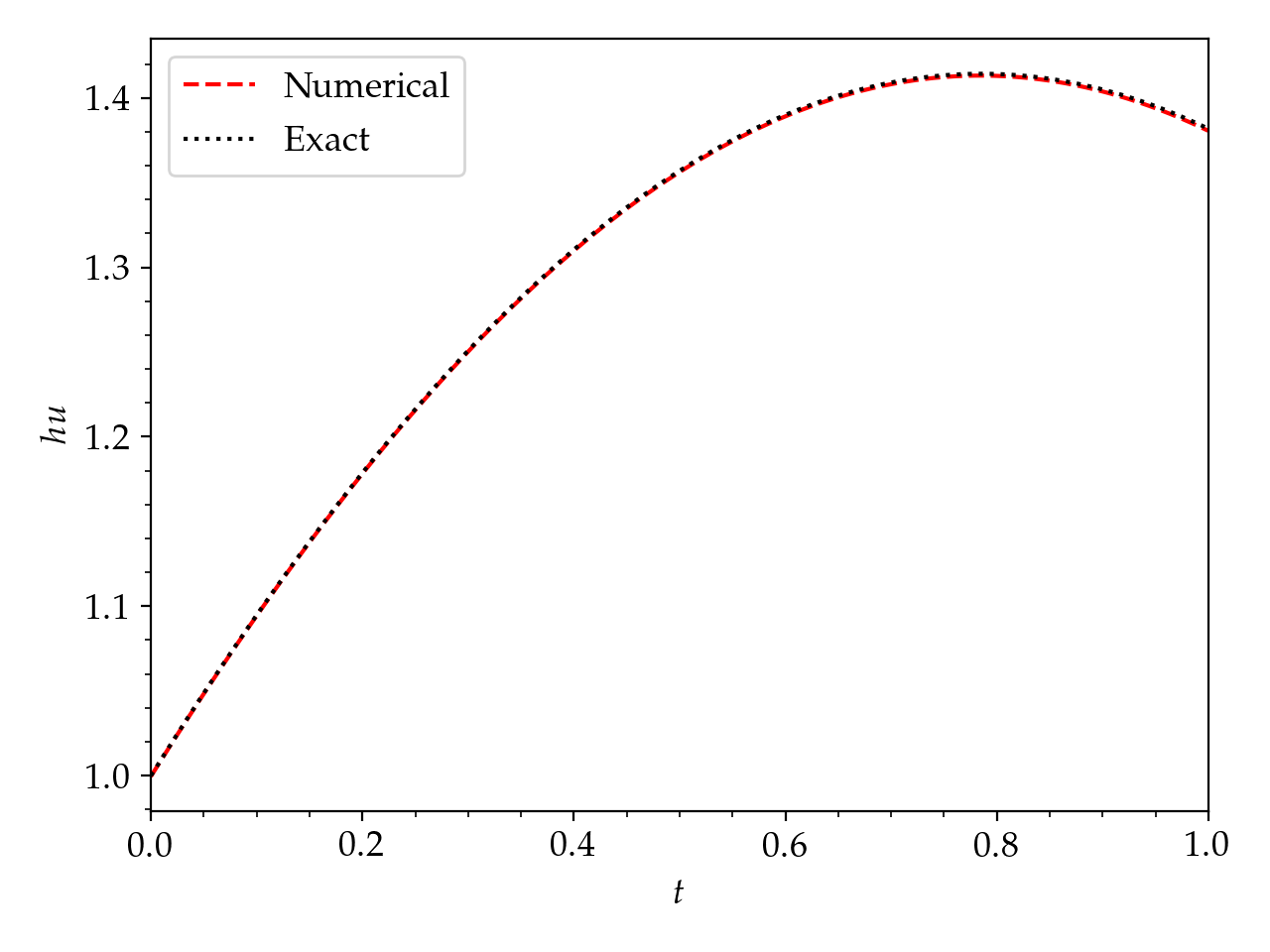}
  \includegraphics[height = 0.25\textheight]{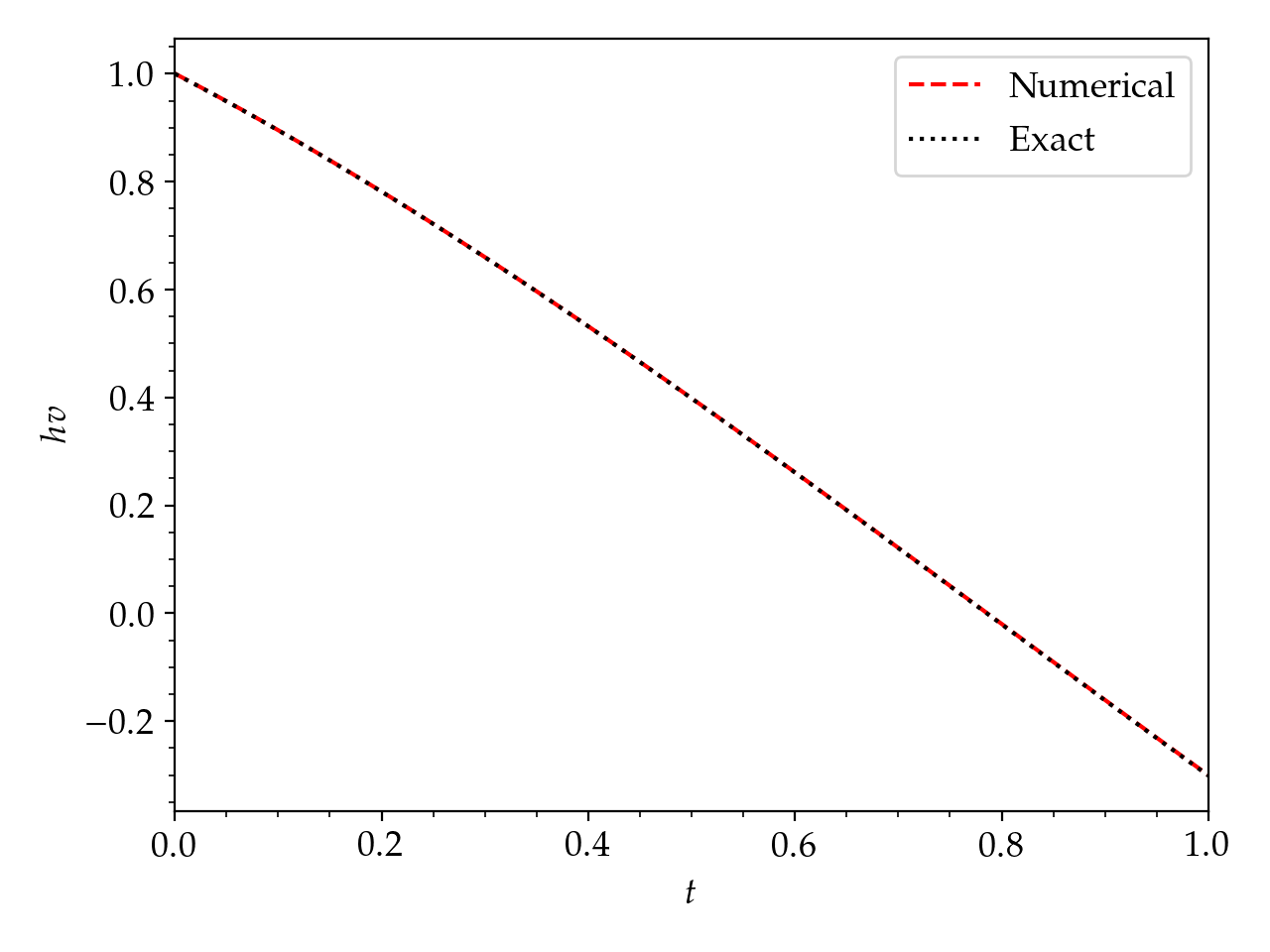}
  \caption{Momentum profiles over time, $hu$ (left) and $hv$ (right).}
  \label{fig:des-ex-soln}
\end{figure}

\subsection{Convergence Test (\cite{LNK07})}

The computational domain is $\Omega = [0,1]^2$ with parameters $\omega
= 1$ and $g = 9.8$. This test case is designed to illustrate the
convergence properties of the scheme and to determine the experimental
order of convergence (EOC). The initial data are given by 
\begin{gather}
  h(0,x,y) = 10 + \exp(\sin(2\pi x))\cos(2\pi y),\quad b(x,y) =
  \sin(2\pi x) + \cos(2\pi y), \\ 
  hu(0,x,y) = \sin(\cos(2\pi x))\sin(2\pi y),\quad hv(0,x,y) =
  \cos(2\pi x)\cos(\sin(2\pi y)). 
\end{gather}
We impose periodic boundary conditions and set the final time to $T =
0.05$. The solution is computed on successive grids of size $k \times
k$, with $k = 2^j$ for $j = 4, \dots, 8$. Since the exact solution is
not available, a reference solution is obtained on a $512 \times 512$
grid. The $L^2$-errors between the reference and numerical solutions
are reported in Table \ref{tab:eoc}. The results confirm first-order
convergence across all variables.  

\begin{table}[htpb]
  \centering
  \begin{tabular}{|c|c|c|c|c|c|c|}
    \hline
    $k$ & $L^2$-error in $h$ & EOC & $L^2$-error in $u$ & EOC &
                                                                $L^2$-error in $v$ & EOC  \\ 
    \hline
    16 & 0.2038 & - & 0.0868 & - & 0.2022 & - \\
    \hline
    32 & 0.0897 & 1.18 & 0.0312 & 1.47 & 0.0882 & 1.19\\
    \hline
    64 & 0.0368 & 1.28 & 0.0119 & 1.38 & 0.0356 & 1.30 \\
    \hline
    128 & 0.0147 & 1.32 & 0.0048 & 1.30 & 0.0139 & 1.35 \\ 
    \hline
    256 & 0.0047 & 1.63 & 0.0015 & 1.61 & 0.0044 & 1.66\\
    \hline
  \end{tabular}
  \caption{Errors and EOC values for the convergence test case.}
  \label{tab:eoc}
\end{table}

\subsection{Stationary Vortex (\cite{CDK+18})} 
The computational domain is $\Omega = [-1,1]^2$, with a flat bottom
($b \equiv 0$), $\omega = 1/\varepsilon$, and $g = 1/\varepsilon^2$,
where $\varepsilon = 0.05$, corresponding to a low-Froude
regime in geophysics. Denoting $r = \sqrt{x^2 + y^2}$, the initial
data are given by 
\begin{gather}
  h(0,x,y) = 1 + \veps^2
  \begin{dcases}
    \frac{5}{2}(1+5\veps^2)r^2, &r<\frac{1}{5}, \\
    \frac{1}{10}(1+5\veps^2) + \delta(r) + \veps^2\kappa(r),
    &\frac{1}{5}\leq r < \frac{2}{5}, \\ 
    \frac{1}{5}(1-10\veps^2 + 20\veps^2\ln(2)), &r\geq \frac{2}{5},
  \end{dcases} \\
  u(0, x, y) = -\veps y \Gamma(r),\quad v(0,x,y) = \veps x\Gamma(r), \quad \Gamma(r) = 
    \begin{dcases}
      5, &r<\frac{1}{5}, \\
      \frac{2}{r}-5, &\frac{1}{5}\leq r < \frac{2}{5}, \\
      0, &r\geq \frac{2}{5},
    \end{dcases}
  \end{gather}
  where
  \[
    \delta(r) = 2r - \frac{3}{10}-\frac{5}{2}r^2, \quad \kappa(r) =
    4\ln(5r) + \frac{7}{2} - 20r + \frac{25}{2}r^2. 
  \]

The computations are carried out on a $200 \times 200$ grid with
extrapolation boundary conditions up to the final time $T =
10$. Figure \ref{fig:sta_vort_ht} shows the water height at the final
time. Although the chosen configuration represents a steady state of
the RSW system, it is not a geostrophic steady state and, therefore,
not an exact discrete steady state of the present
scheme. Nevertheless, the well-balanced scheme successfully preserves
the circular and symmetric structure of the vortex, as illustrated in
both plots of Figure \ref{fig:sta_vort_ht}. 

\begin{figure}[htpb]
  \centering
  \includegraphics[height = 0.25\textheight]{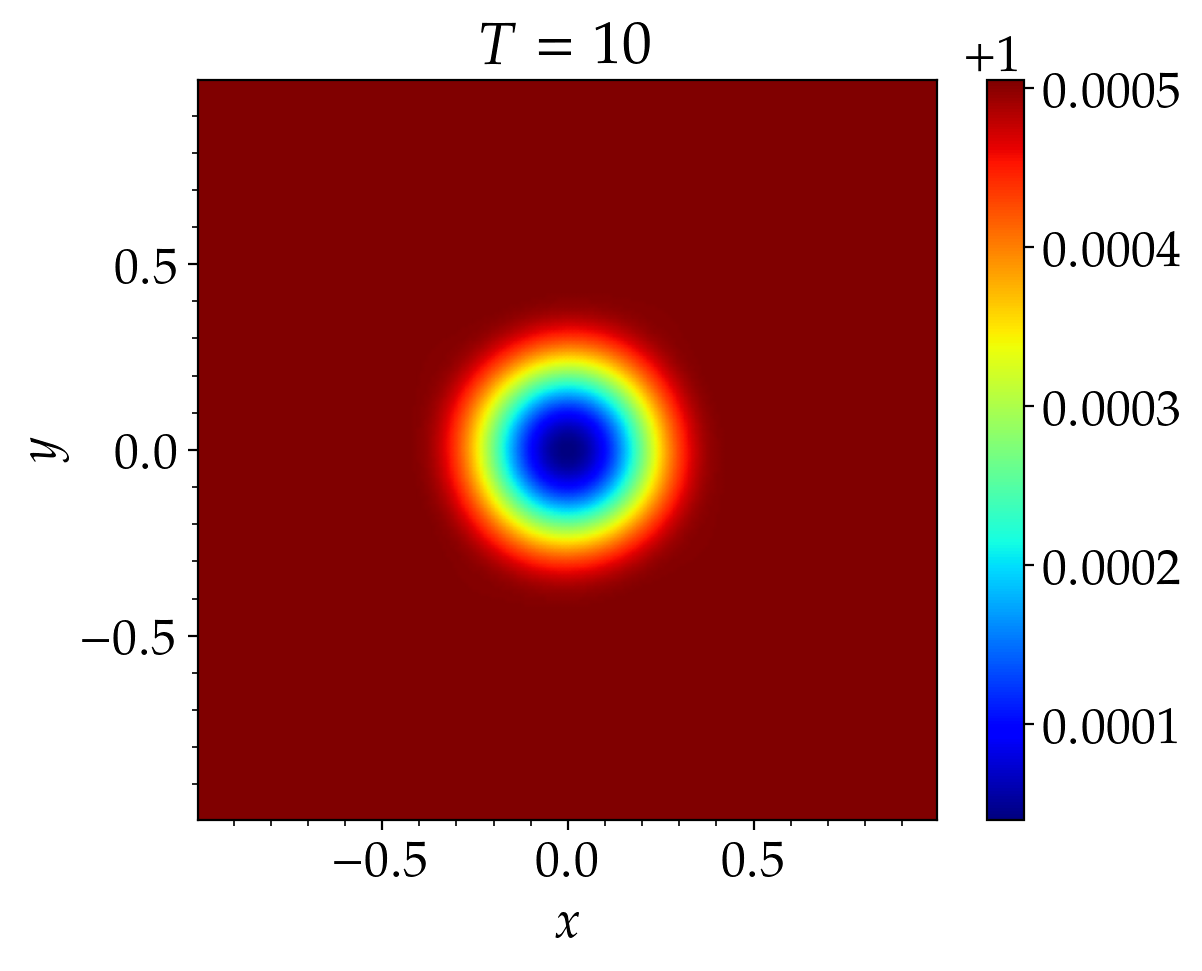}
  \includegraphics[height = 0.25\textheight]{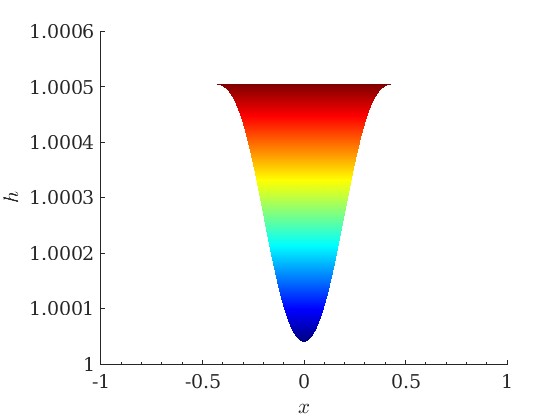}
  \caption{The height $h$ of the stationary vortex computed at $T =
    10$. Pseudocolor plot (left) and the slice along $y = 0$ (right).} 
  \label{fig:sta_vort_ht}
\end{figure}

\subsection{Geostrophic Adjustment(\cite{CLP08})}

In this test case, an elliptical perturbation in the water height is
introduced into a lake-at-rest configuration. This perturbation
generates an outward-propagating shock wave with a rotating flow
behind it. The computational domain is $\Omega = [-10,10]^2$, with
flat topography ($b \equiv 0$) and parameters $\omega = g = 1$. The
initial data are specified by 
\begin{gather}
  h(0,x,y) = 1 + \frac{1}{4}(1 - \tanh(10(\sqrt{2.5x^2 + 0.4y^2} -
  1)), \quad u(0, x, y) = v(0, x, y) = 0. 
\end{gather}
Figure \ref{fig:geo-adj} displays the water height $h$, computed on
grids of size $200 \times 200$ and $400 \times 400$, at times $T = 4$
and $T = 8$. Despite the non-conservative formulation of the pressure
term, the scheme accurately captures the propagation of
discontinuities over time as well as the rotating flows behind the
shock. 
\begin{figure}
    \centering
    \includegraphics[height = 0.20\textheight]{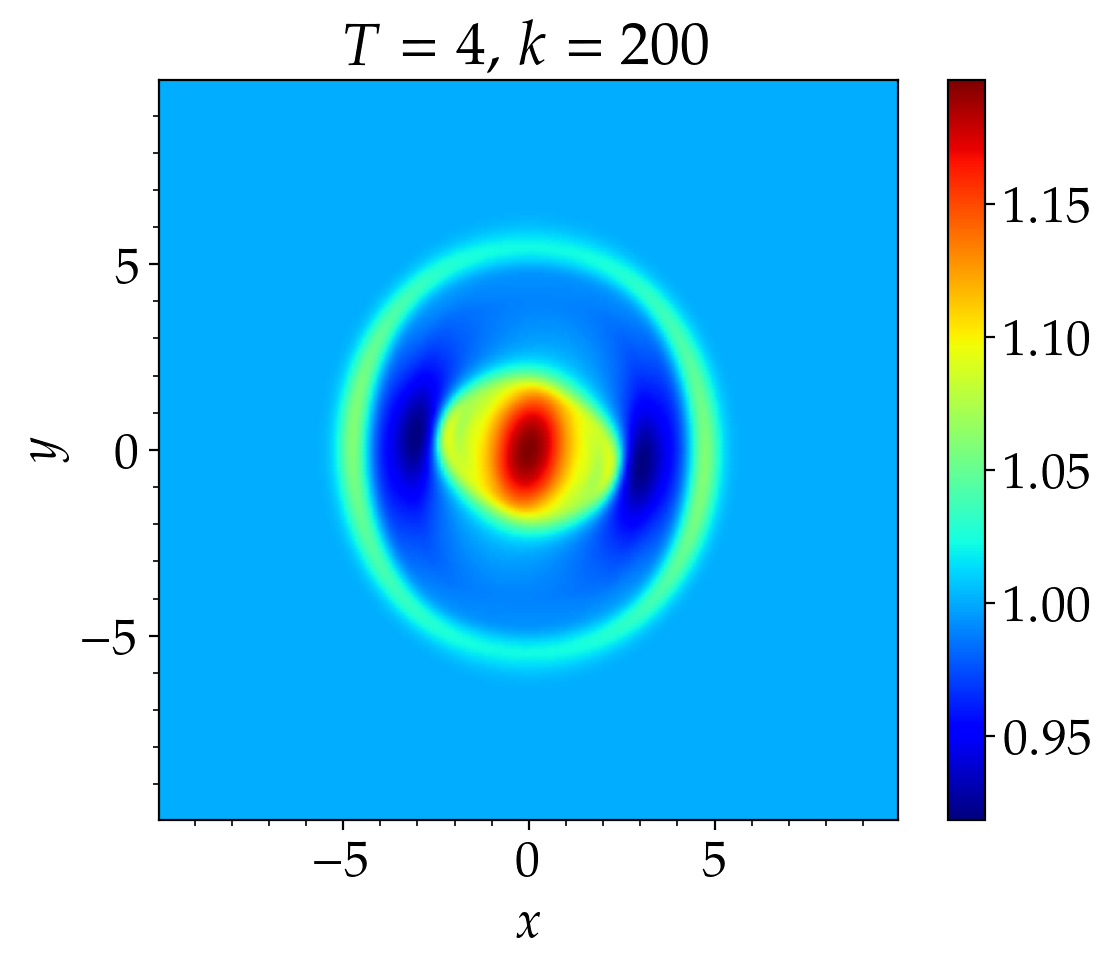}
    \includegraphics[height = 0.20\textheight]{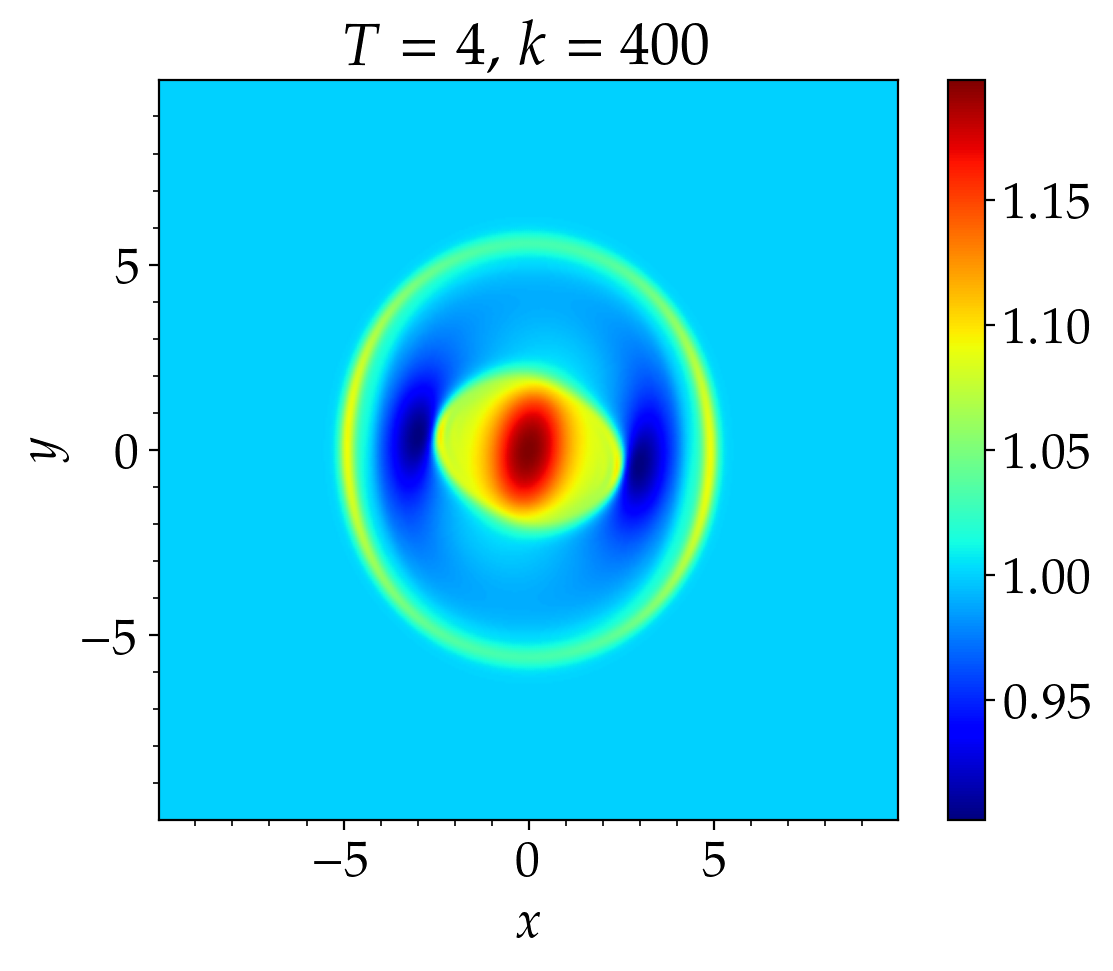}
    \includegraphics[height = 0.20\textheight]{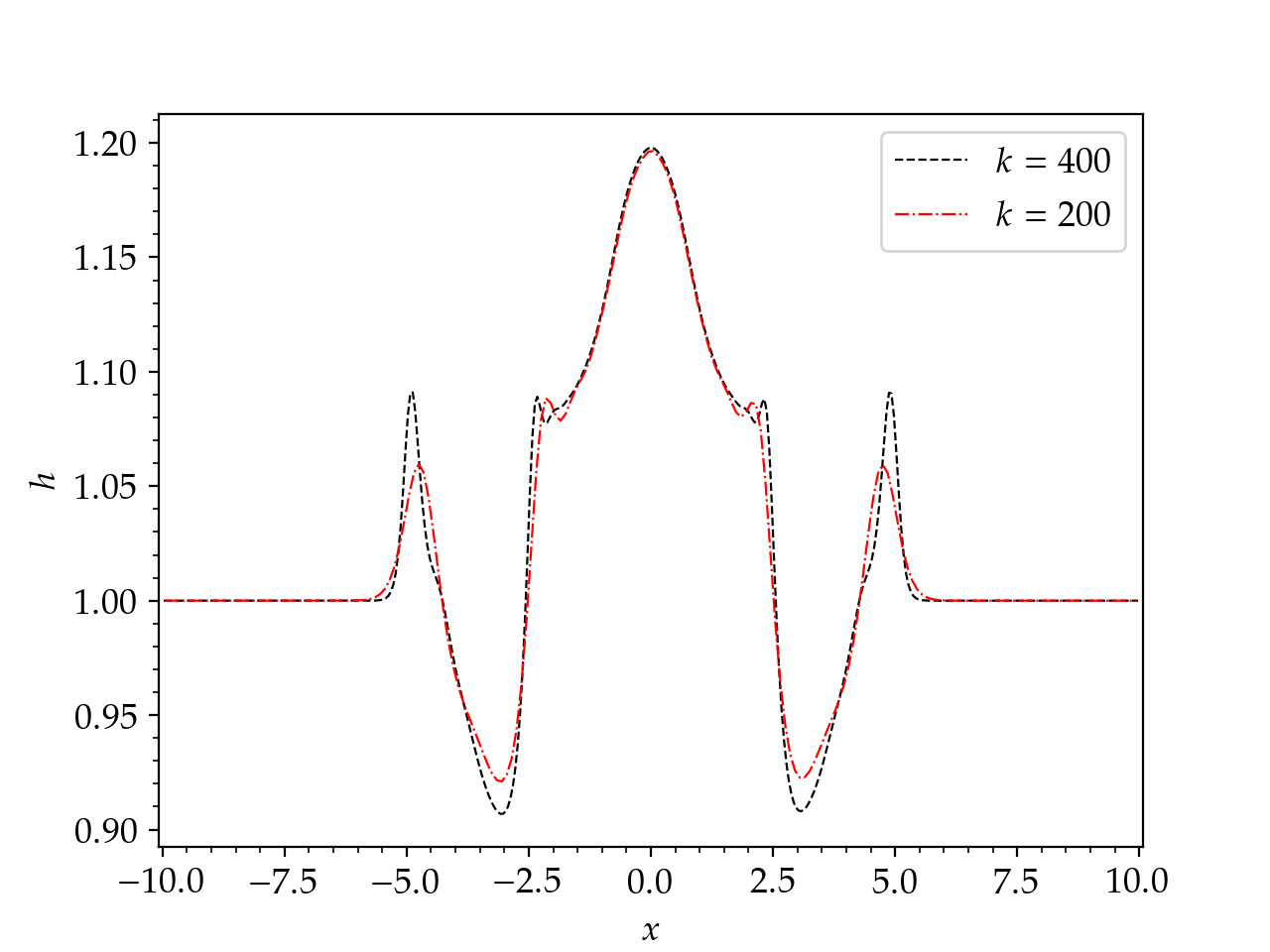}
    \includegraphics[height = 0.20\textheight]{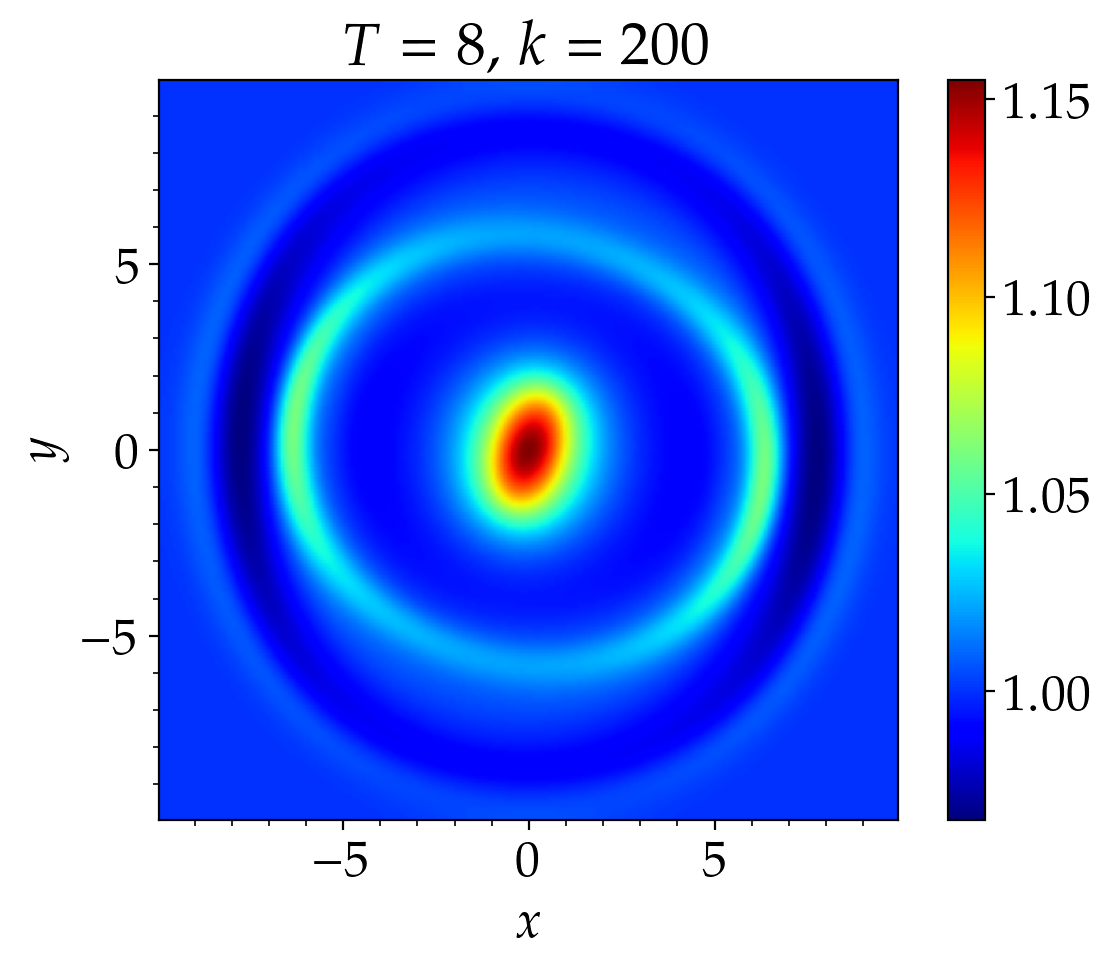}
    \includegraphics[height = 0.20\textheight]{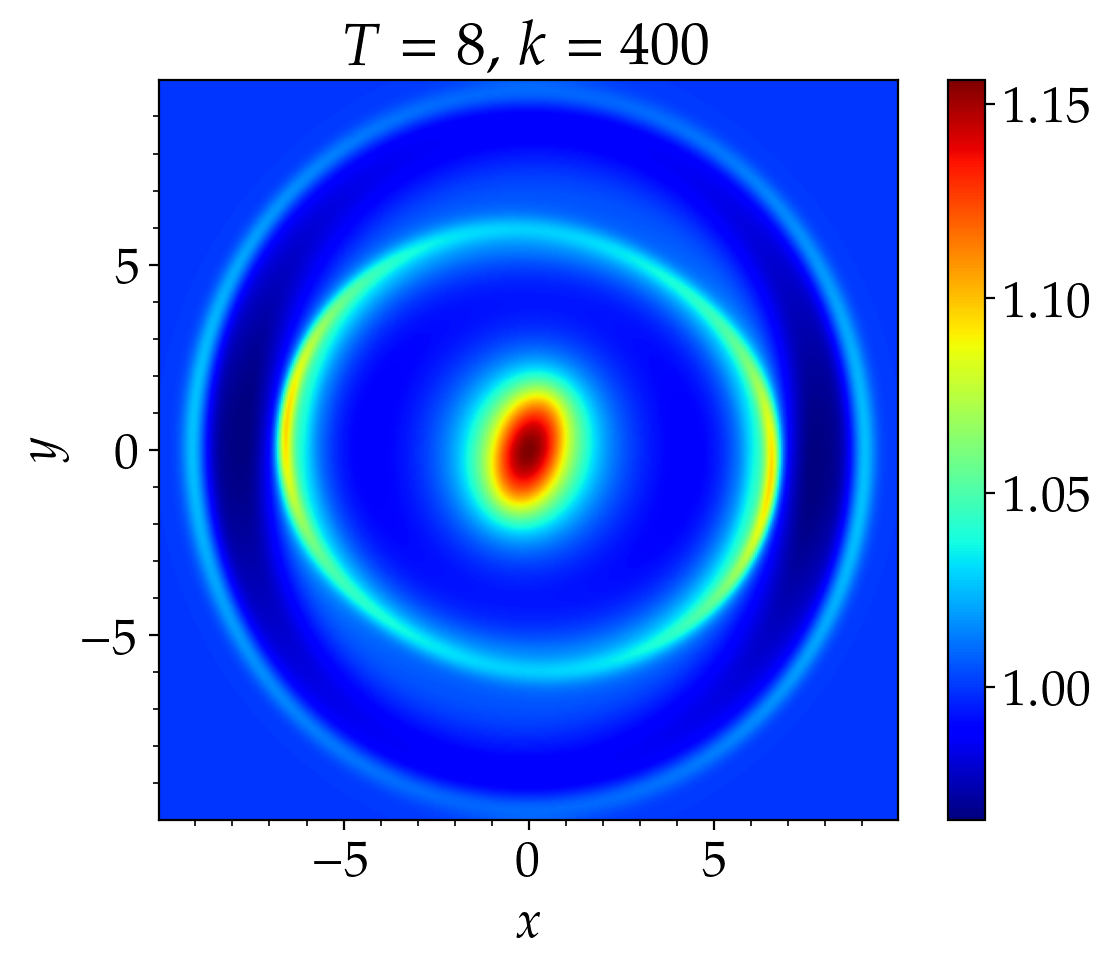}
    \includegraphics[height = 0.20\textheight]{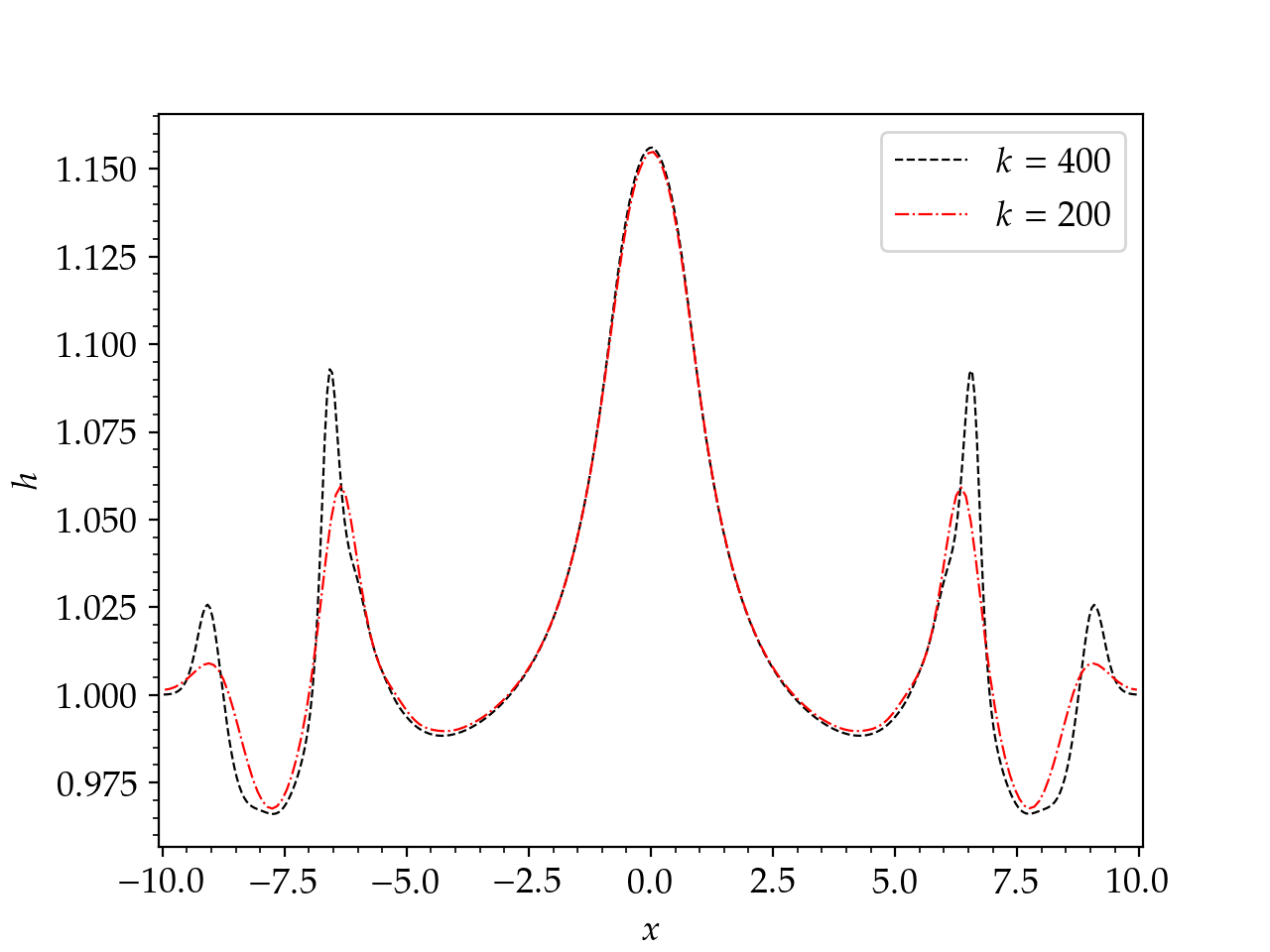}
    \caption{The water height $h$ at times $T = 4$ (top) and $T = 8$ (bottom). \\ Pseudocolor plots (left and middle) and the slice across $y = 0$ (right).}
    \label{fig:geo-adj}
\end{figure}

\subsection{Shear Flow Evolution (\cite{WB19})}

We consider a doubly periodic domain $[0, L_x]\times[0, L_y]$, with
$L_x = L_y = 5000$ km. We set $b\equiv 0$ and $g = 9.80616\text{
  m}/\text{s}^2$ and $\omega = 6.147\times 10^{-5}\text{ s}^{-1}$. The
initial data is as follows: 

\begin{gather}
    h(0,x,y) = H_0 - H^\prime\frac{y^{\prime\prime}}{\sigma_y}\exp\biggl(\frac{-y{^{\prime}}^2}{2\sigma_y^2} + \half\biggr)\biggl(1 + \kappa\sin\biggl(\frac{2\pi x^\prime}{\lambda_x}\biggr)\biggr),\\
    u(0,x,y) = \frac{gH^\prime}{\omega\sigma_y L_y}\biggl(c(y) - \frac{y{^{\prime\prime}}^2}{\sigma_y^2}\biggr)\exp\biggl(\frac{-y{^{\prime}}^2}{2\sigma_y^2} + \half\biggr)\biggl(1 + \kappa\sin\biggl(\frac{2\pi x^\prime}{\lambda_x}\biggr)\biggr), \\
    v(0,x,y) = -\frac{gH^\prime}{\omega L_x}\frac{2\pi x^\prime}{\lambda_x}\frac{y^{\prime\prime}}{\sigma_y}\exp\biggl(\frac{-y{^{\prime}}^2}{2\sigma_y^2} + \half\biggr)\cos\biggl(\frac{2\pi x^\prime}{\lambda_x}\biggr).
\end{gather}
where $c(y) = \cos\bigl(\frac{2\pi}{L_y}\bigl(y - \frac{L_y}{2}\bigr)\bigr)$, $x^\prime = \frac{x}{L_x},\quad y^\prime = \frac{1}{\pi}\sin\bigl(\frac{\pi}{L_y}\bigl(y - \frac{L_y}{2}\bigr)\bigr),\ y^{\prime\prime} = \frac{1}{2\pi}\sin\bigl(\frac{2\pi}{L_y}\bigl(y - \frac{L_y}{2}\bigr)\bigr)$
with $\lambda_x = 0.5,\ \sigma_y = \frac{1}{12},\ \kappa = 0.1$.
Finally, $H_0 = 1076\text{ m}$ and $H^\prime = 30\text{ m}$. The above
choice of the parameters yield a flow in the quasi-geostrophic
regime \cite{Ped12}. We present the pseudocolor plots of the evolution of the
height $h$ over a period of 15 days computed on a $512\times 512$ grid
in Figure \ref{fig:ht-shear-flow}. In addition, we also present the
pseudocolor plots of the potential vorticity $PV$, defined as 
\[
  PV = \dfrac{\omega + \text{curl}(u,v)}{h},\quad \text{curl}(u,v) =
  \Dx v - \Dy u,
\]
in Figure \ref{fig:pv-shear-flow}. The initially imposed perturbations
cause the flow to evolve into two pairs of counter-rotating
vortices. By day 7, these vortex pairs are clearly separated and
individually distinguishable, cf.\ Figure
\ref{fig:ht-shear-flow}. Under the action of the Coriolis force, the
rotating fluid subsequently generates even finer vortical structures,
as evidenced by the potential vorticity ($PV$) plots for days 13 and
15. Hence, although the scheme is well-balanced with respect to jets
in the rotational frame, it also demonstrates the capability to
capture flows in the quasi-geostrophic regime with desirable accuracy.  
\begin{figure}[htpb]
  \centering
  \includegraphics[height =
  0.2\textheight]{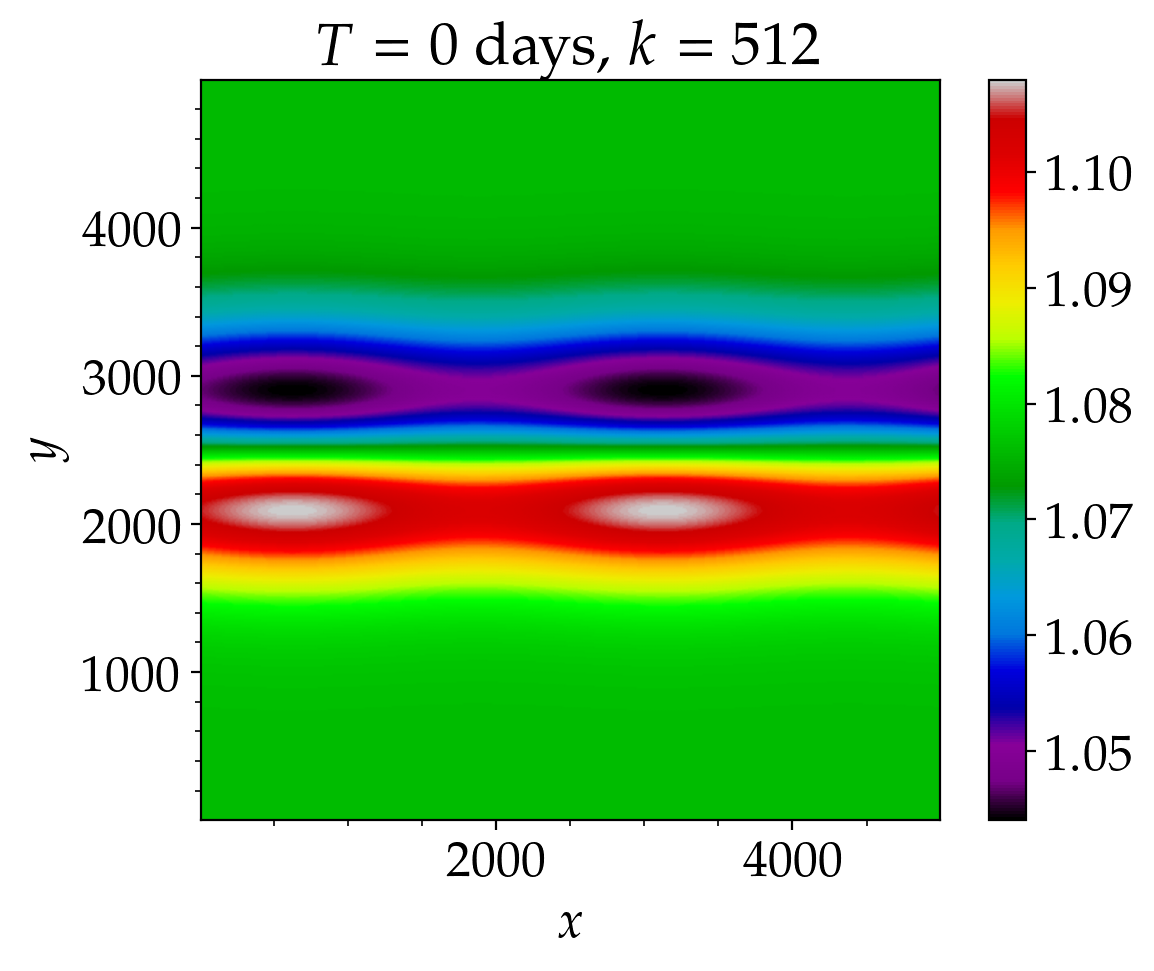} 
  \includegraphics[height =
  0.2\textheight]{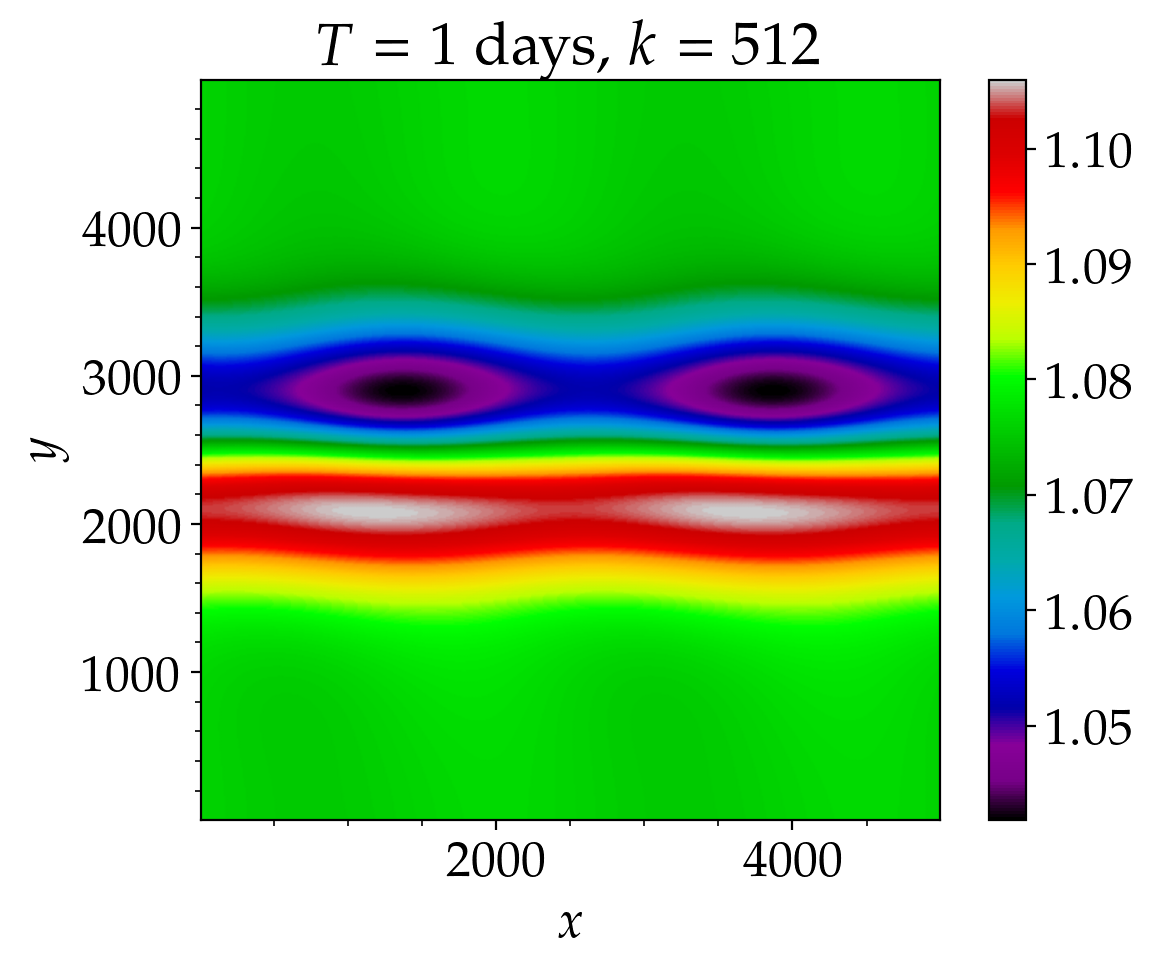} 
  \includegraphics[height =
  0.2\textheight]{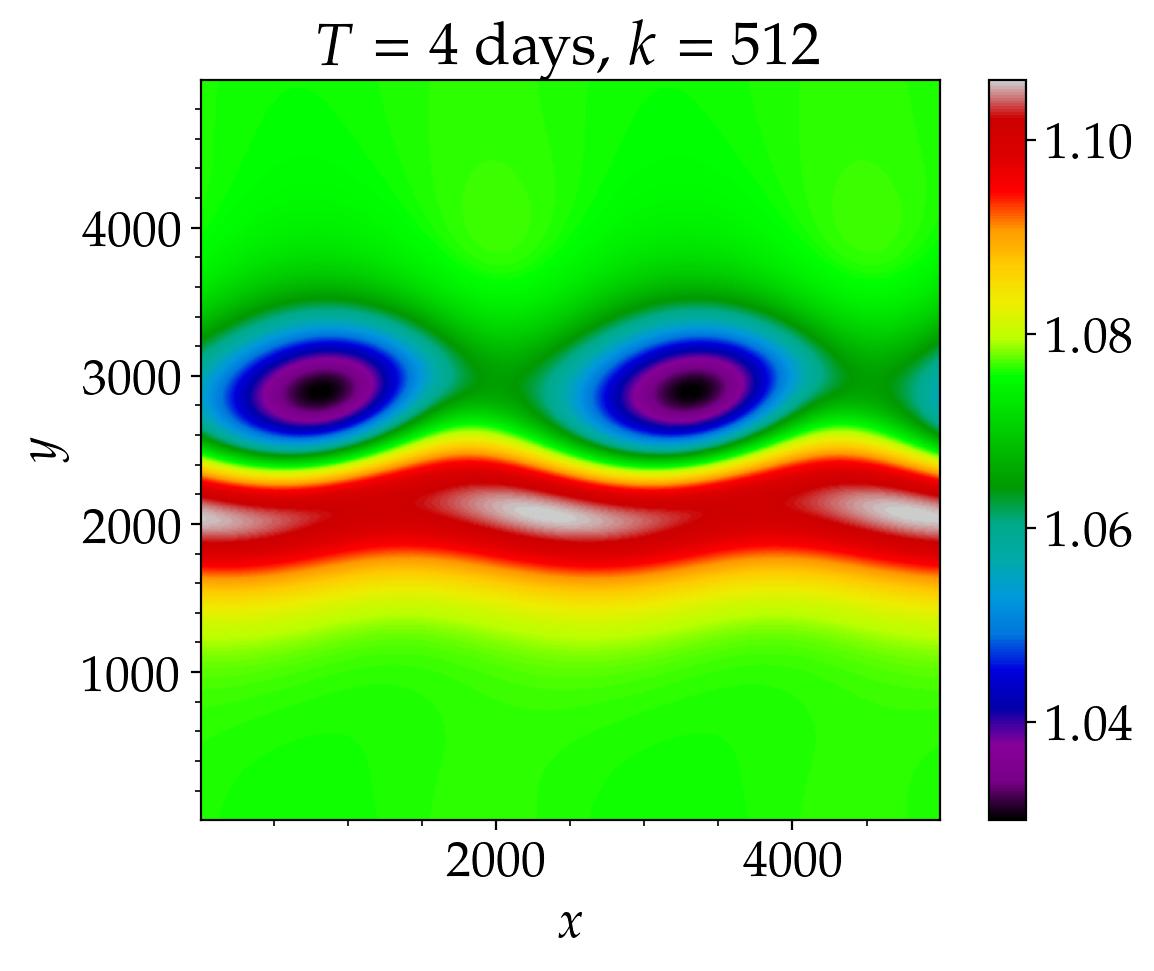} 
  \includegraphics[height =
  0.2\textheight]{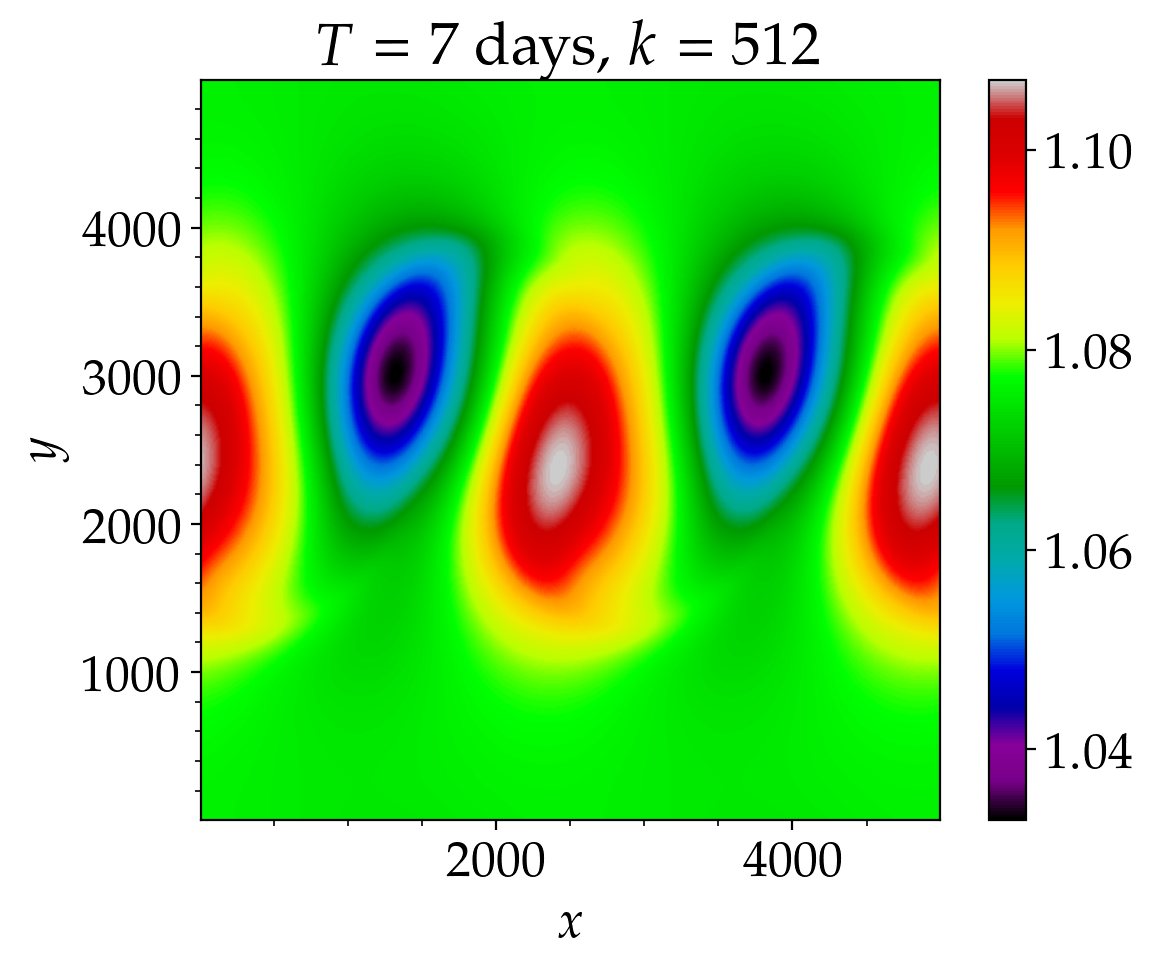} 
  \includegraphics[height =
  0.2\textheight]{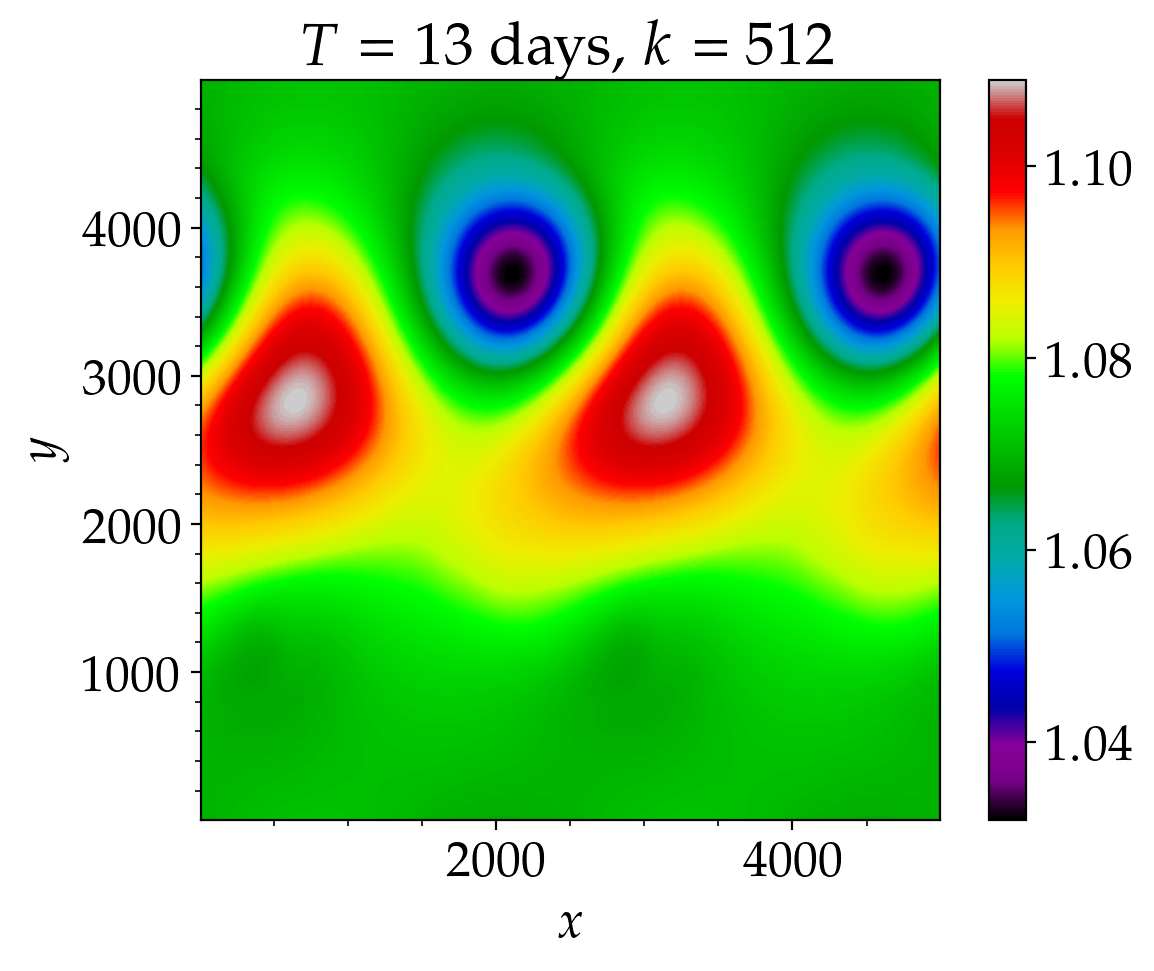} 
  \includegraphics[height =
  0.2\textheight]{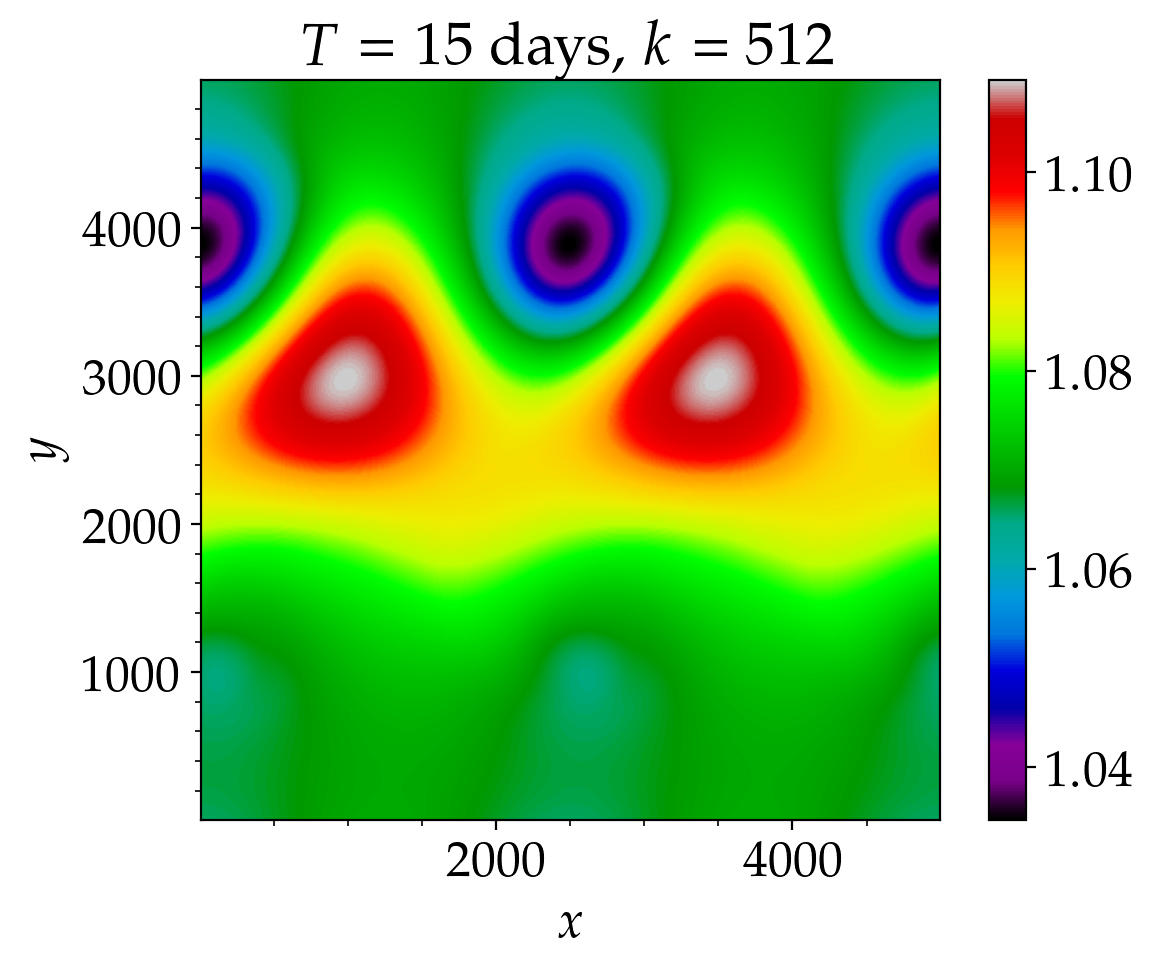} 
  \caption{Evolution of the height $h$ (in km) for the shear flow problem.}
  \label{fig:ht-shear-flow}
\end{figure}
\begin{figure}[htpb]
  \centering
  \includegraphics[height =
  0.2\textheight]{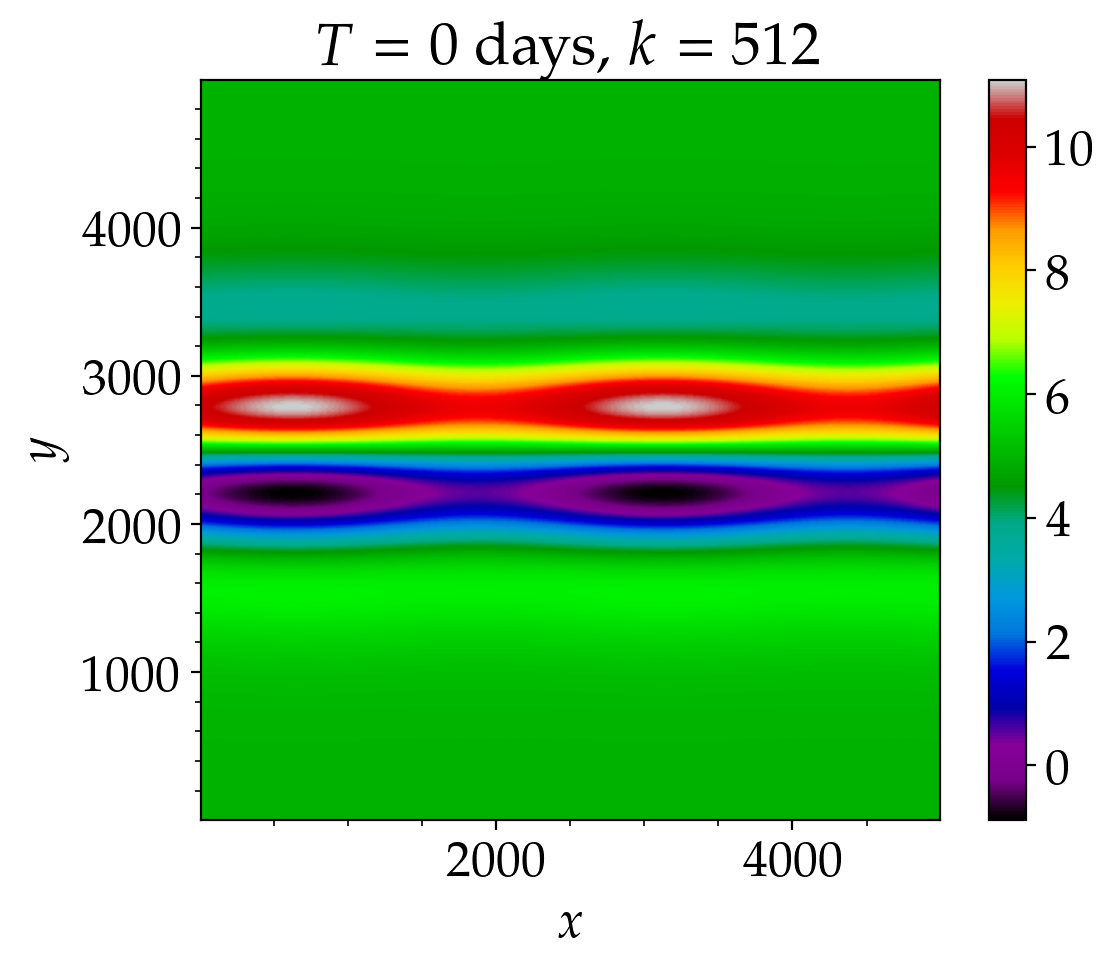} 
  \includegraphics[height =
  0.2\textheight]{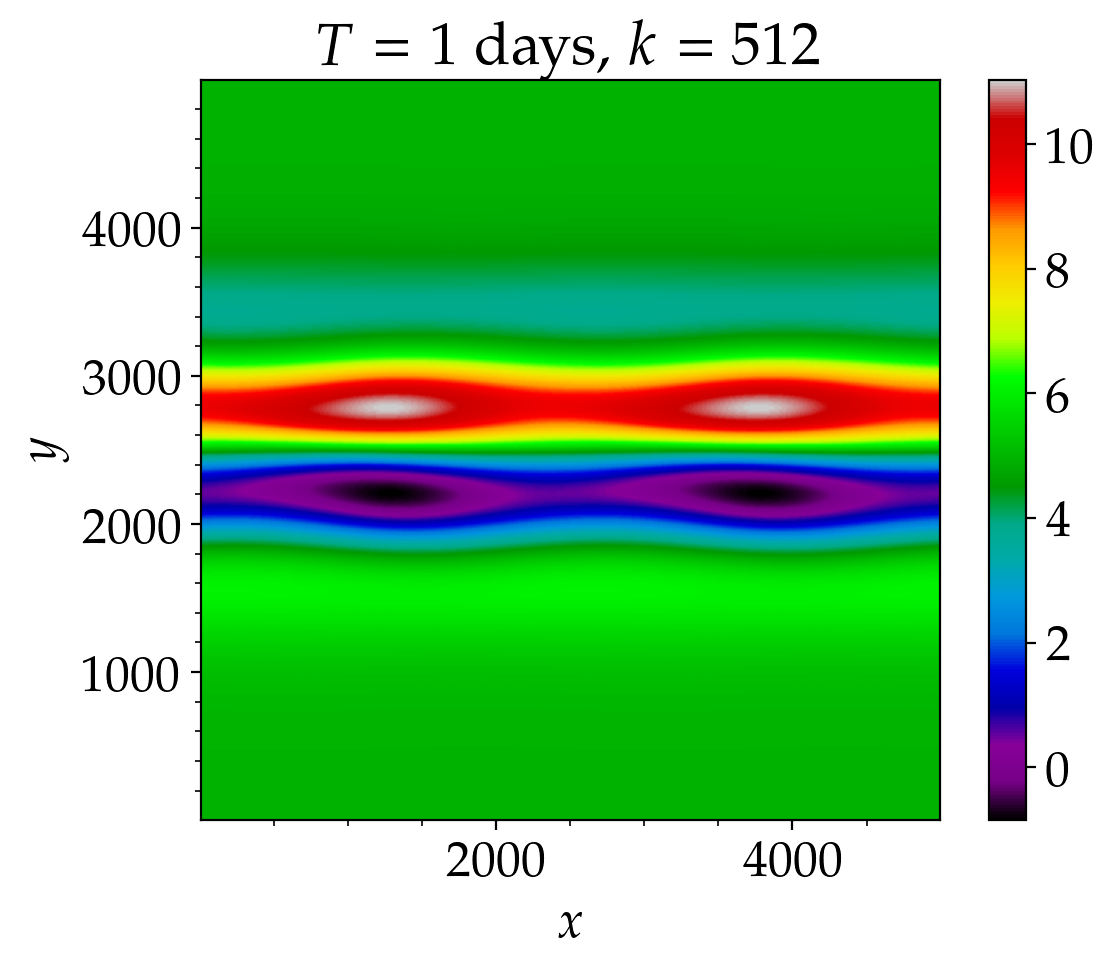} 
  \includegraphics[height =
  0.2\textheight]{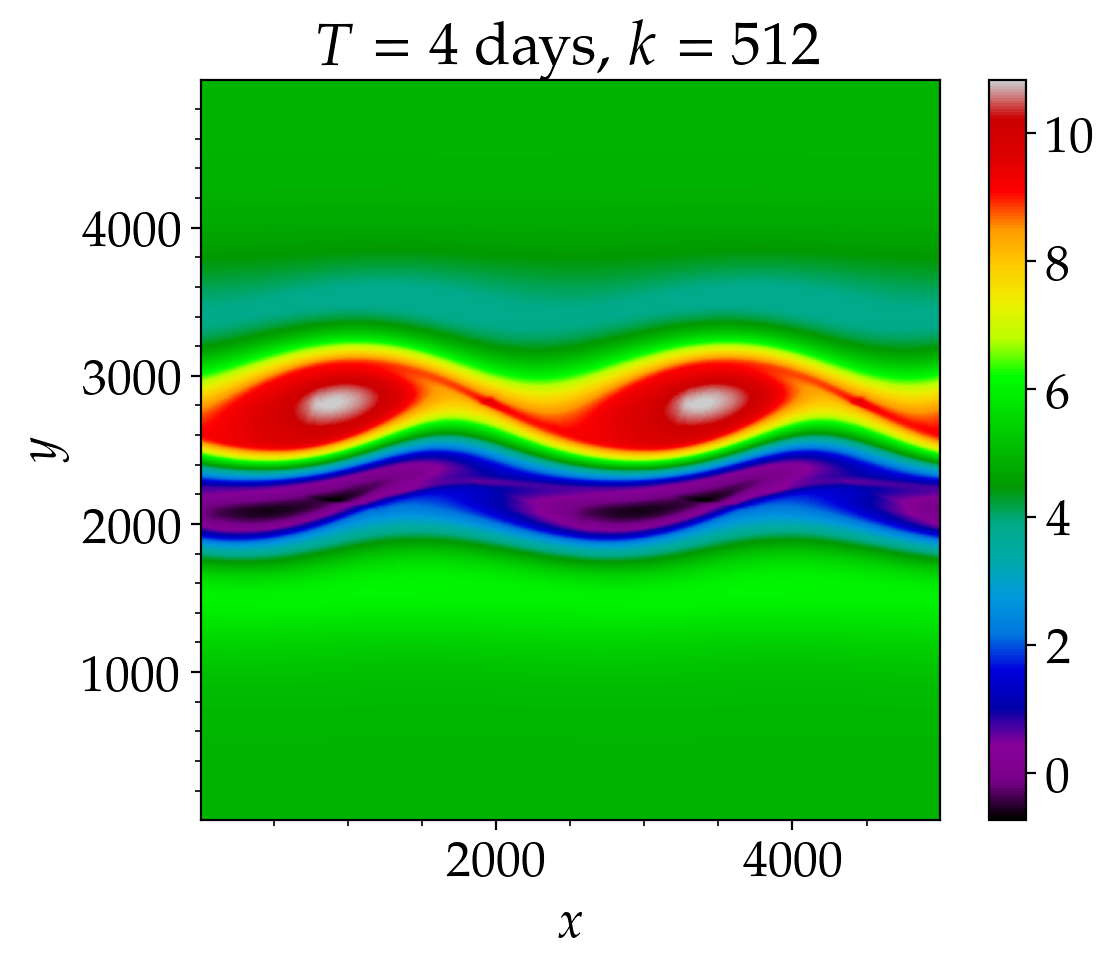} 
  \includegraphics[height =
  0.2\textheight]{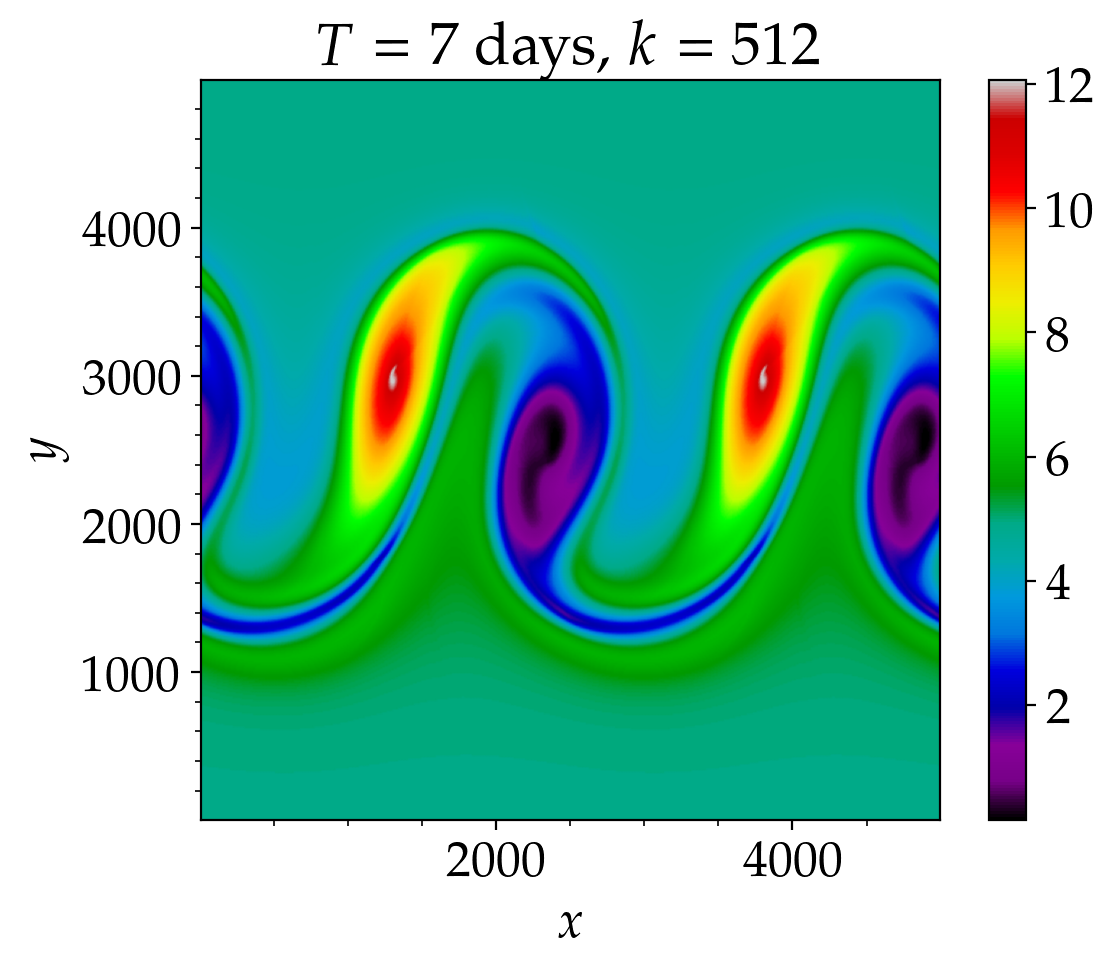} 
  \includegraphics[height =
  0.2\textheight]{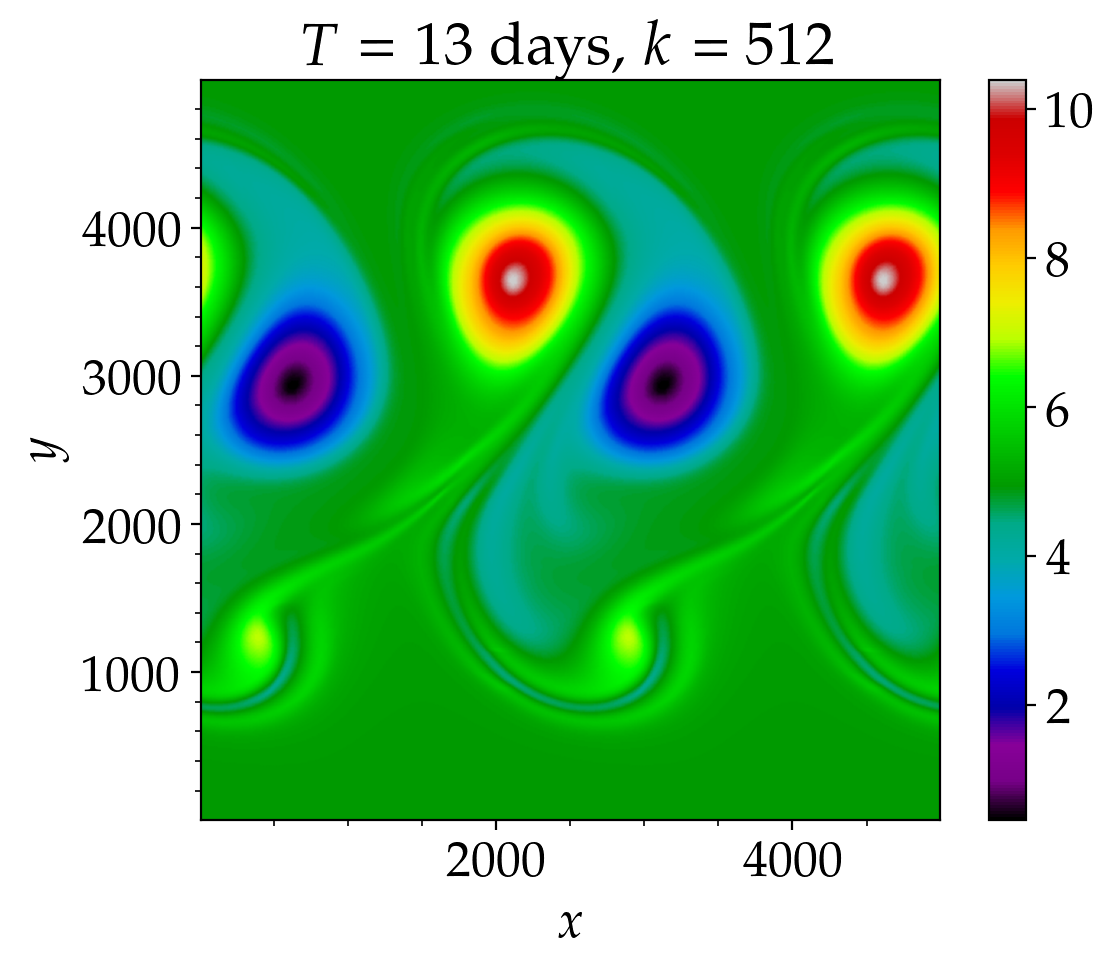} 
  \includegraphics[height =
  0.2\textheight]{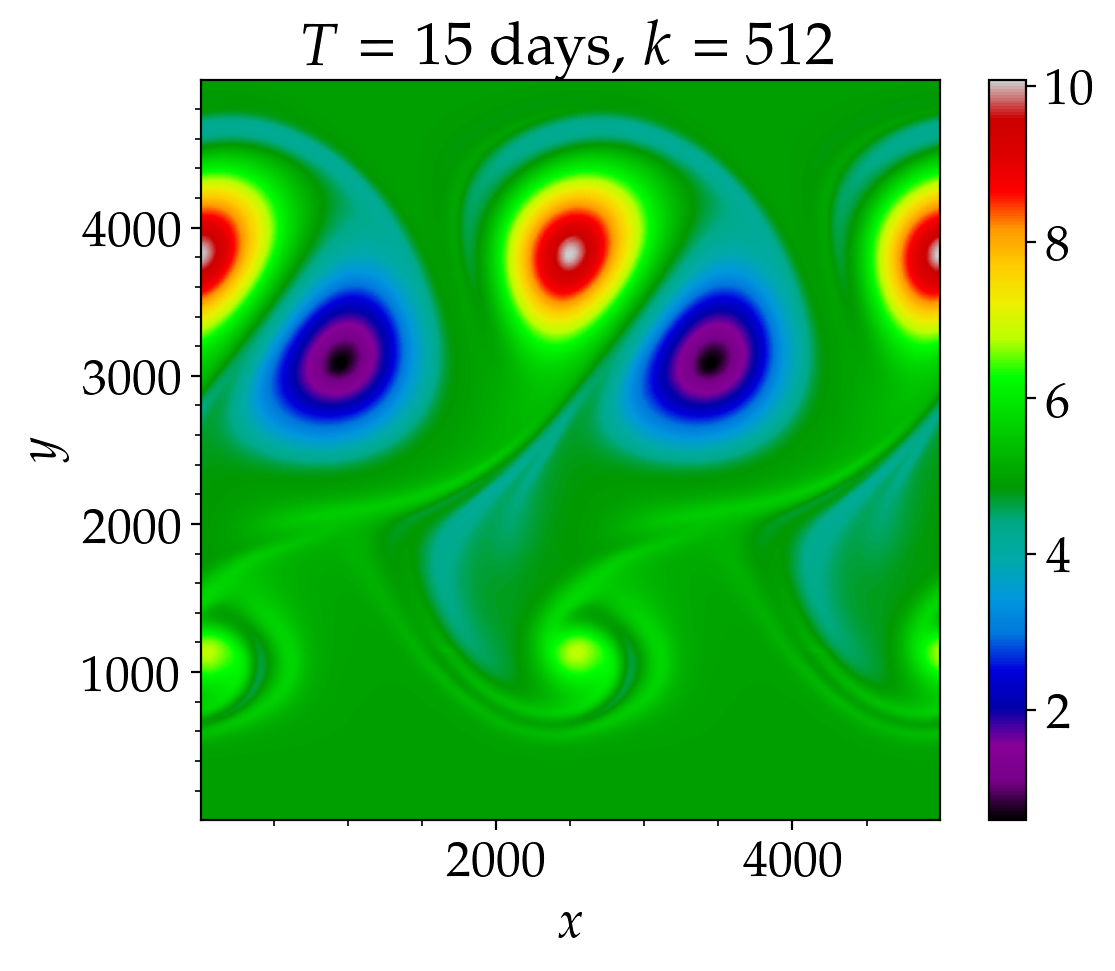} 
  \caption{Evolution of the potential vorticity $PV$ (in
    days$^{-1}$km$^{-1}$) for the shear flow problem.} 
  \label{fig:pv-shear-flow}
\end{figure}

\subsection{Vortex Pair Interaction (\cite{WB19})}

This test case is designed to demonstrate the convergence of numerical
solutions toward a DMV solution as the mesh is refined. We consider a
doubly periodic domain $[0, L_x] \times [0, L_y]$ with $L_x = L_y =
5000$ km. The bottom topography is flat ($b \equiv 0$), and the
parameters are set to $g = 9.80616\,\text{ms}^{-2}$ and $\omega =
6.147 \times 10^{-5}\,\text{s}^{-1}$. The initial data correspond to a
geostrophic equilibrium and are prescribed as follows: 
\begin{gather}
  h(0,x,y) = H_0 - H^\prime\biggl(e_1(x,y) + e_2(x,y) -
  \frac{4\pi\sigma_x\sigma_y}{L_x L_y}\biggr), \\ 
  (u,v)(0,x,y) = \frac{gH^\prime}{\omega}(-\sigma_y^{-1}(y_1^{\prime\prime}e_1(x,y) + y_2^{\prime\prime}e_2(x,y)),\ \sigma_x^{-1}(x_1^{\prime\prime}e_1(x,y) + x_2^{\prime\prime}e_2(x,y))),
\end{gather}
where $e_i(x,y) = \exp(-\half(x_i{^\prime}^2 + y_i{^\prime}^2))$ for
$i = 1,2$. Here, for $z = x,y$ and $i = 1, 2$. 
\begin{gather*}
  z_i^\prime = \frac{L_z}{\pi\sigma_z}\sin\biggl(\frac{\pi}{L_z}(z-z_{c_i})\biggr), \quad 
  z_i^{\prime\prime} = \frac{L_z}{2\pi\sigma_z}\sin\biggl(\frac{2\pi}{L_z}(z-z_{c_i})\biggr).
\end{gather*}
The centers of the vortices along with $\sigma_x$ and $\sigma_y$ are given by 
\begin{gather*}
    z_{c_1} = 0. 4L_z,\quad z_{c_2} = 0.6L_z, \quad \sigma_z = \frac{3}{40}L_z,
\end{gather*}
where $z = x,y$. We finally set $H^\prime = 75$ m and $H_0 = 750$ m in
order to give rise to a flow in the quasi-geostrophic regime. First, we compute the numerical solutions on a $512\times 512$ grid over a period of 10 days and present the evolution of the water height $h$ and the potential vorticity $PV$ in Figures \ref{fig:dbl-vortex-h}-\ref{fig:dbl-vortex-pv}. Owing to the chosen initial
separation between the vortex cores, the vortices remain too far apart
to merge, and nonlinear effects lead to a mutual repulsion of the
cores. 

Now, we wish to verify the convergence of the numerical solutions towards a DMV solution. To begin, we set a final time of $T = 15$ days and compute the numerical solutions on successive grids with resolutions $k = 2^j$, $j = 5, \dots, 9$. The pseudocolor plots of the water height $h$ and potential vorticity $PV$ for these mesh refinements are presented in Figure \ref{fig:pcol-dbl-vortex}. Next, to illustrate the convergence to a DMV solution, we use the techniques of $\mathcal{K}$-convergence; see
\cite{FLM+21a,FLM+21b}. Denoting by $U_k = [h_k, m_{x,k}, m_{y,k}]$ the numerical solution on a $k\times k$ grid, we compute the following errors:  
\[
  E_1 = \|U_k-U_{ref}\|_{L^1},\, E_2 = \|\overline{U}_k-\overline{U}_{ref}\|_{L^1},\, E_3 = \|\Tilde{U}_k-\Tilde{U}_{ref}\|_{L^1},\, E_4 = \|W_1(\overline{\mathcal{V}}^k_{t,x,y},\overline{\mathcal{V}}^{ref}_{t,x,y})\|_{L^1},
\]
where $\overline{U}_k = \frac{1}{k}\sum_{j=1}^k U_j$, $\Tilde{U}_k =
\frac{1}{k}\sum_{j=1}^k\abs{U_j-\overline{U}_k}$,
$\overline{\mathcal{V}}^k_{t,x,y} =
\frac{1}{k}\sum_{j=1}^k\delta_{U_j(t,x,y)}$, where
$\delta_{U_j(t,x,y)}$ denotes the Dirac measure centered at
$U_j(t,x,y)$. Also, $W_1$ here denotes the 1-Wasserstein distance on the space of probability measures and we compute it using the \texttt{wasserstein\_distance} function available in the python library SciPy.
The reference solution $U_{ref}$ is computed on a $1024 \times 1024$
grid using the semi-discrete WB scheme of Audusse et al.\
\cite{ADD+21}. The error $E_1$ is the standard $L^1$ error between the numerical and reference solutions. The errors $E_2$ and $E_3$ are respectively the errors in the Cesaro averages and the first variance, while $E_4$ is the $L^1$ norm of the Wasserstein distance between the refrence and numerical solutions. The main idea behind $\K$-convergence is that averaging out sequences `compactifies' them, allowing us to obtain strong convergence resutls. The errors $E_2$ and $E_3$ exactly encapsulate this idea, while the error $E_4$ reflects the probabilistic nature of the solution.  Since the convergence to a DMV solution is only in the weak sense, we do not expect the error $E_1$ to decrease as we refine the mesh. However, according to the theory of $\K$-convergence, the errors $E_2$, $E_3$ and $E_4$ should decrease to zero as we refine the mesh, which will imply the convergence of the numerical solutions towards a DMV solution of the RSW system. The corresponding error profiles are shown in Figure
\ref{fig:dvt-err-dmv}. The disordered behavior of the error $E_1$,
together with the consistent reduction of the errors $E_2, E_3$ and  $E_4$
under mesh refinement, indicates that the numerical solutions indeed converge
toward a DMV solution of the RSW system. 


\begin{figure}[htpb]
  \centering
  \includegraphics[height = 0.2\textheight]{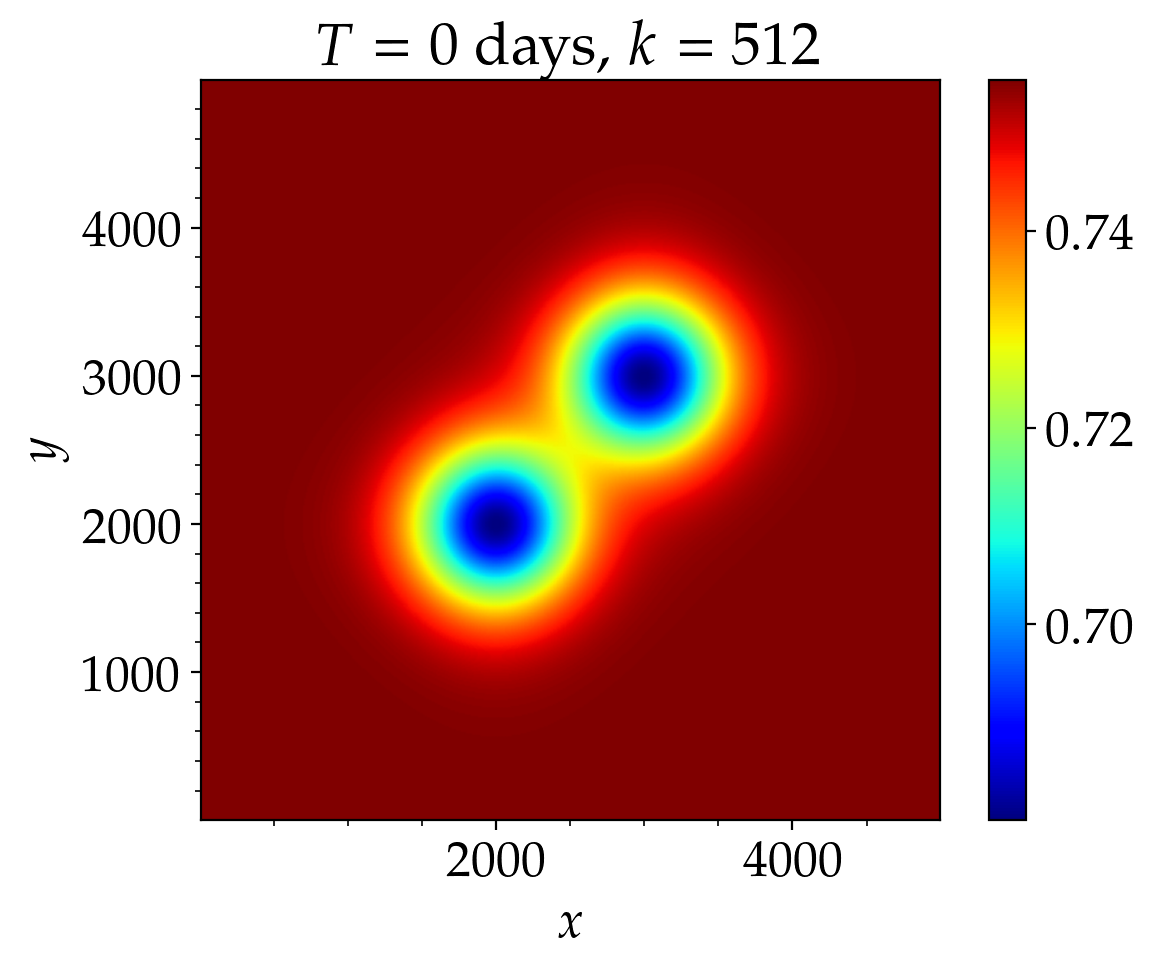}
  \includegraphics[height = 0.2\textheight]{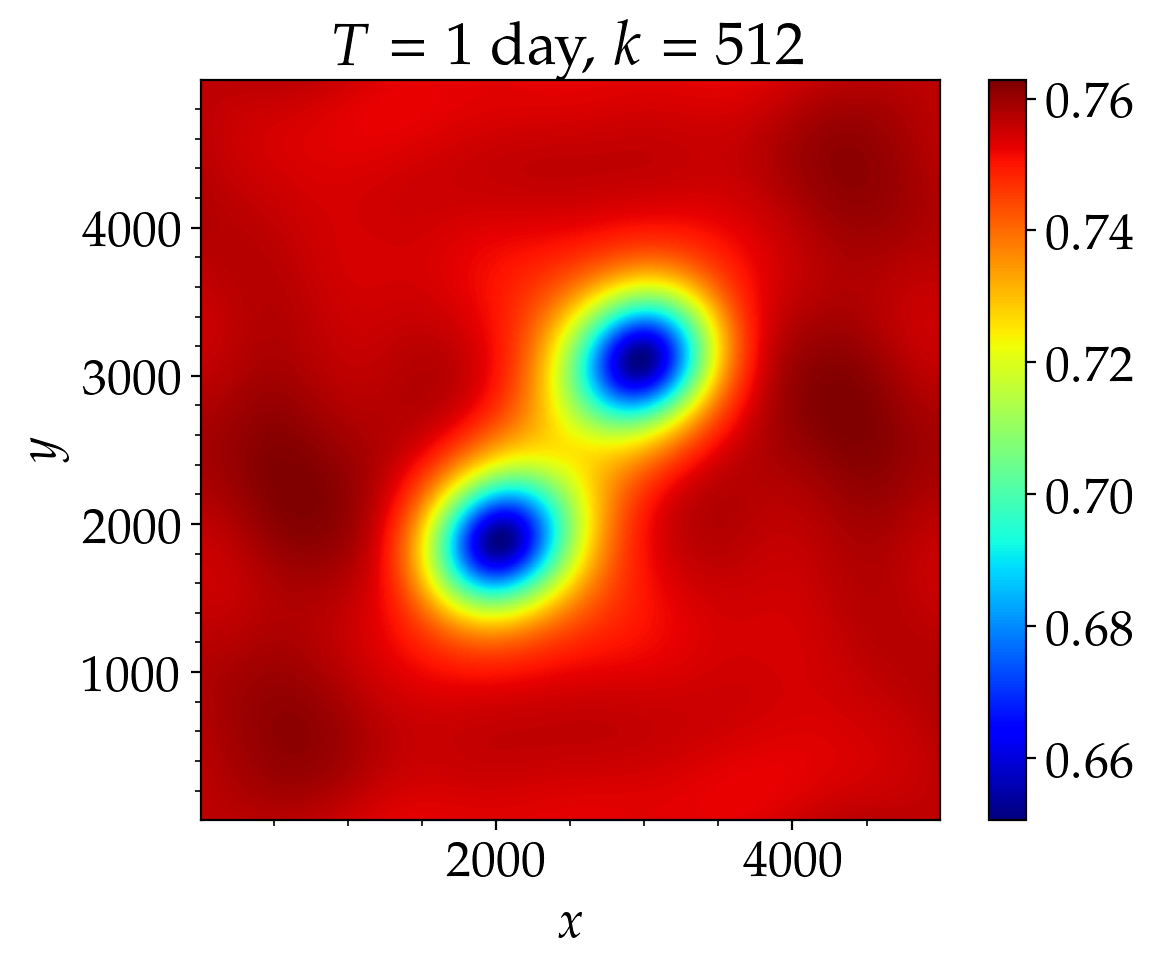}
  \includegraphics[height = 0.2\textheight]{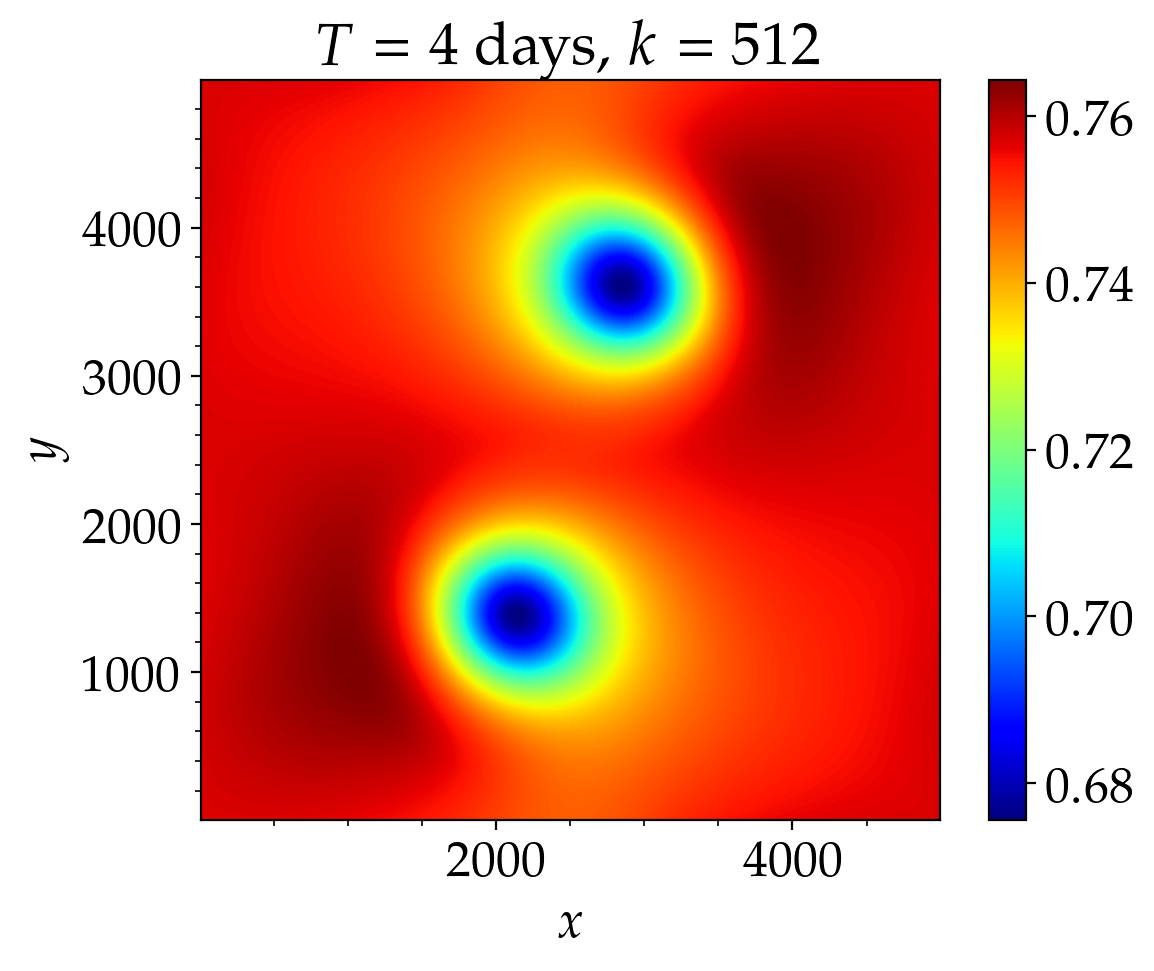}
  \includegraphics[height = 0.2\textheight]{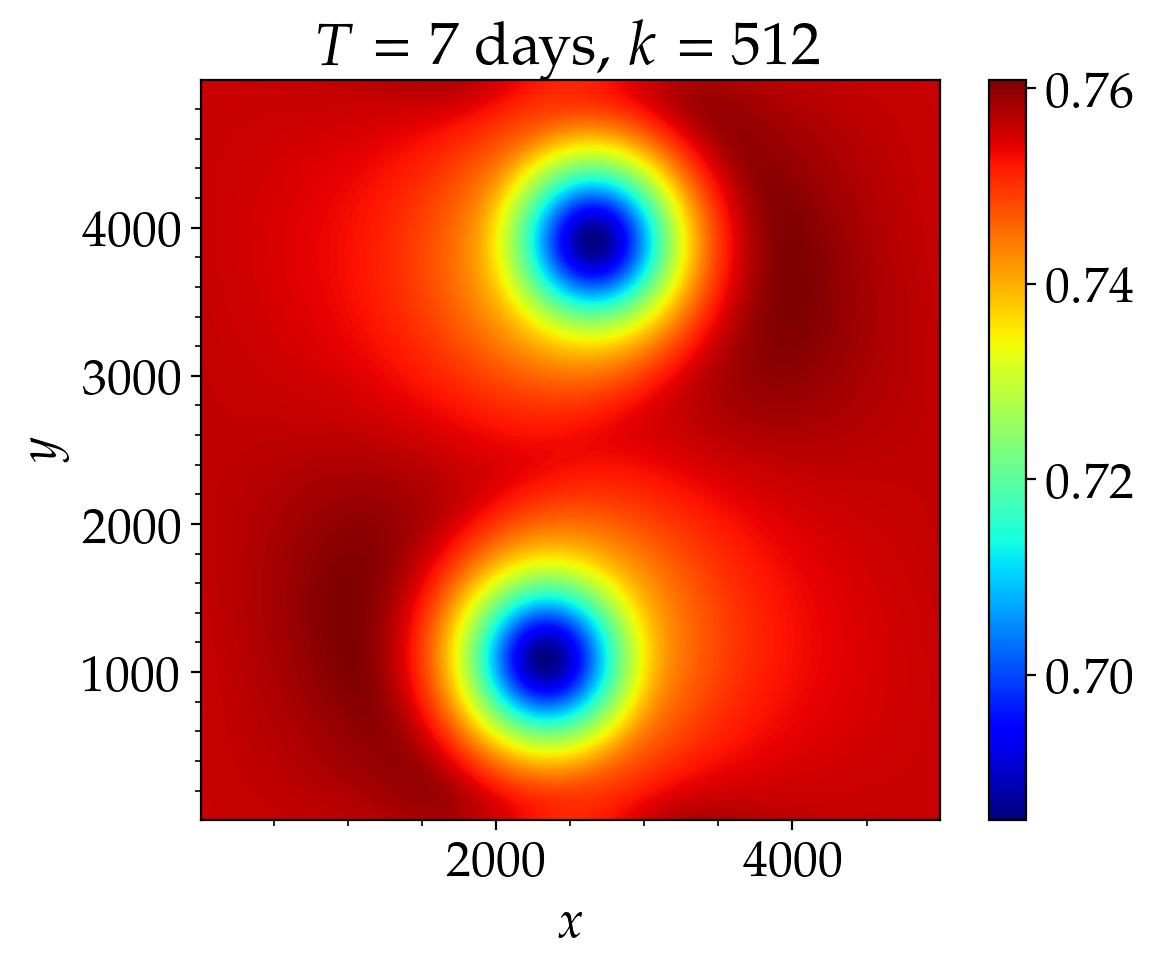}
  \includegraphics[height = 0.2\textheight]{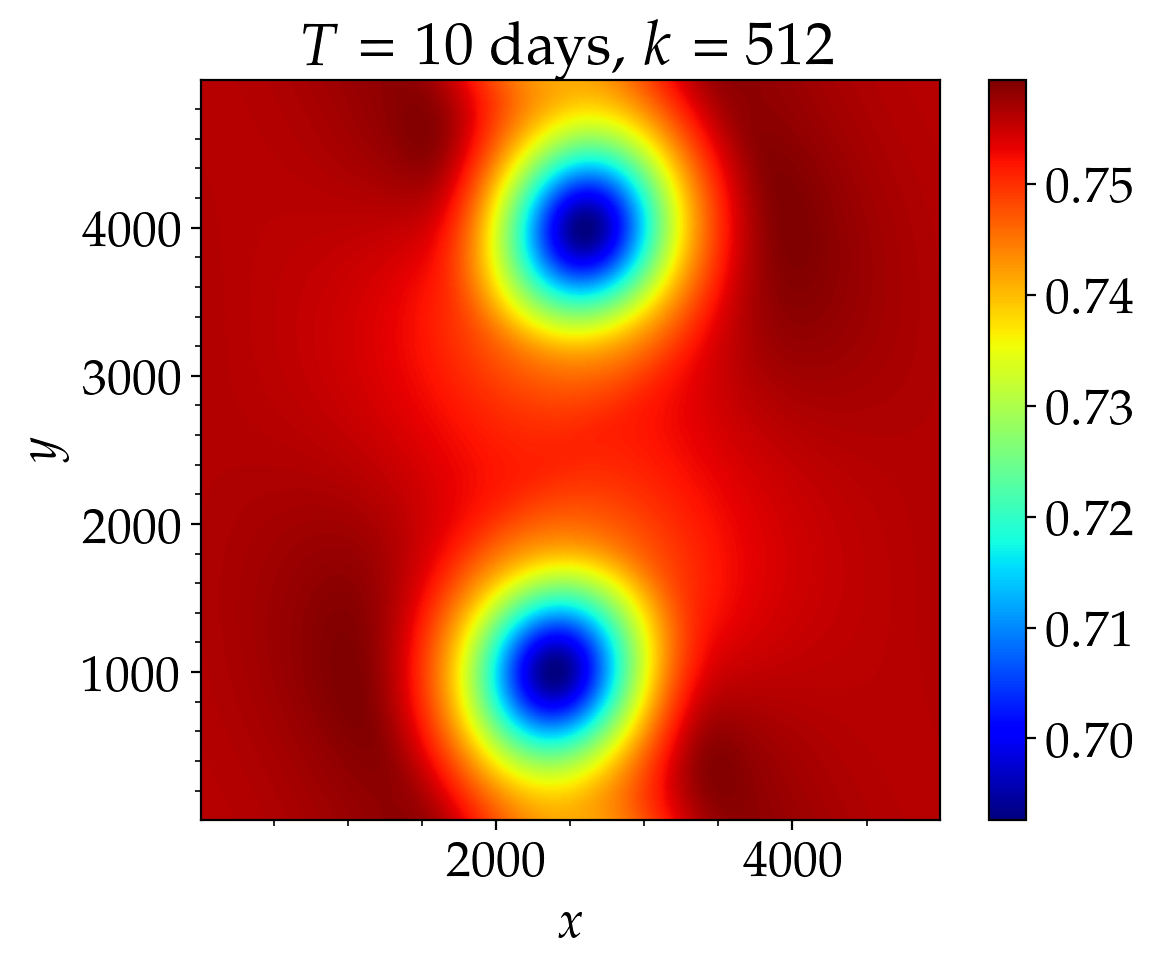}
  \caption{The water height $h$ (in km) over time for the vortex pair interaction problem}
  \label{fig:dbl-vortex-h}
\end{figure}

\begin{figure}[htpb]
  \centering
  \includegraphics[height = 0.2\textheight]{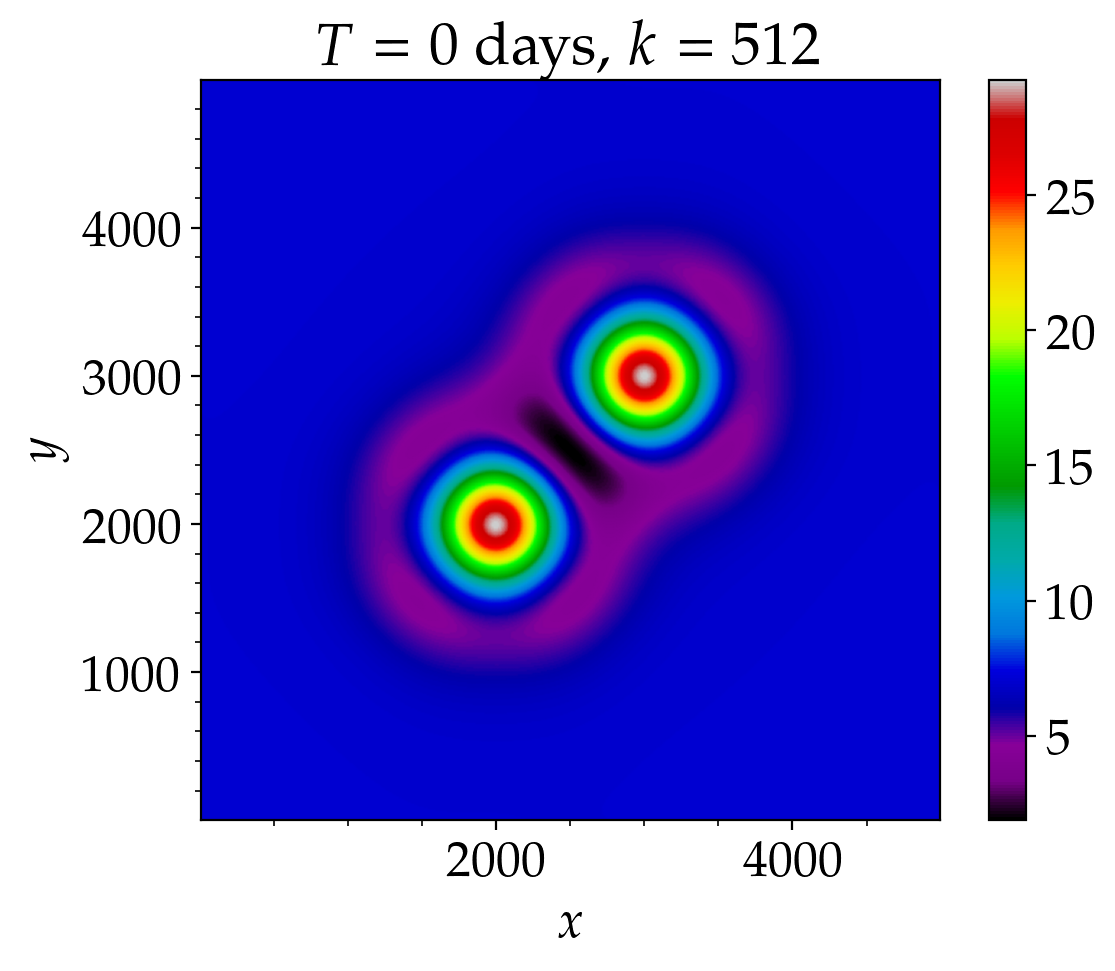}
  \includegraphics[height = 0.2\textheight]{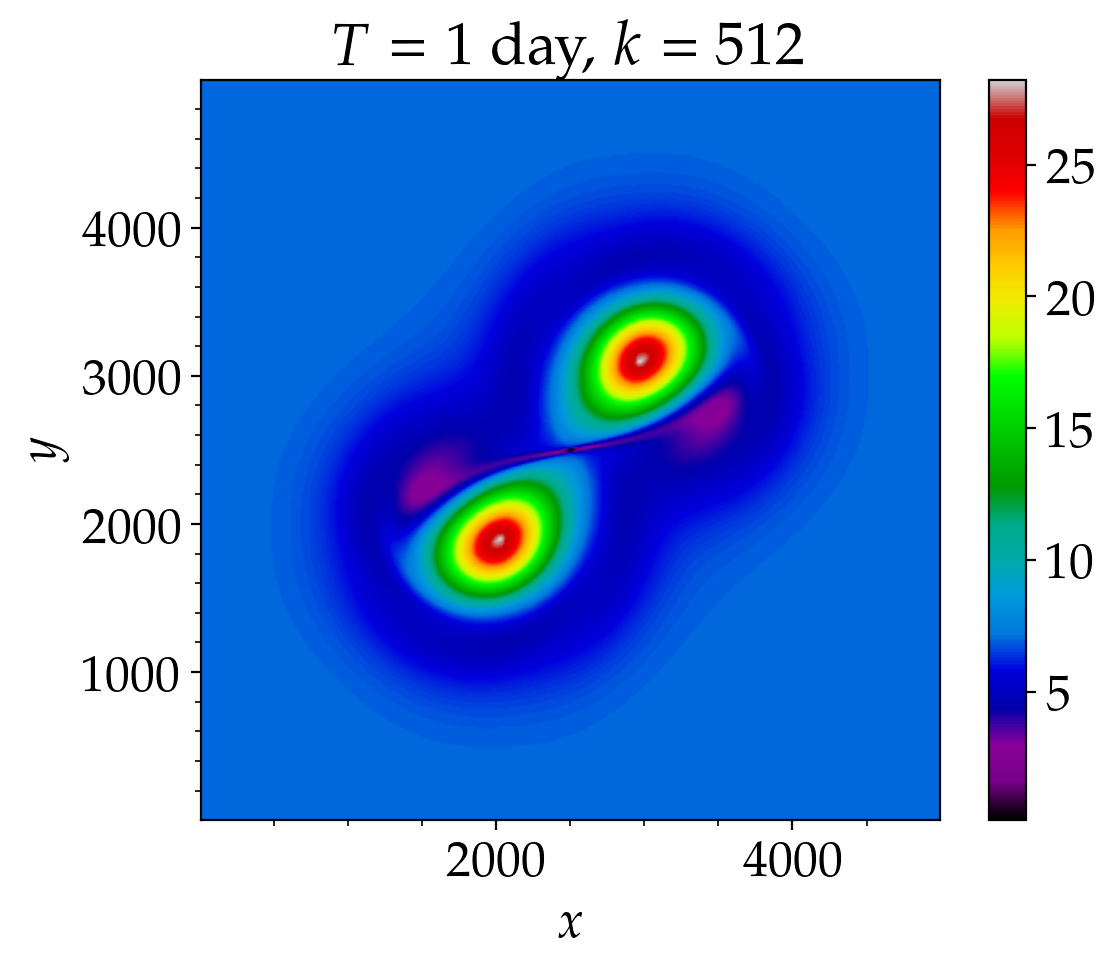}
  \includegraphics[height = 0.2\textheight]{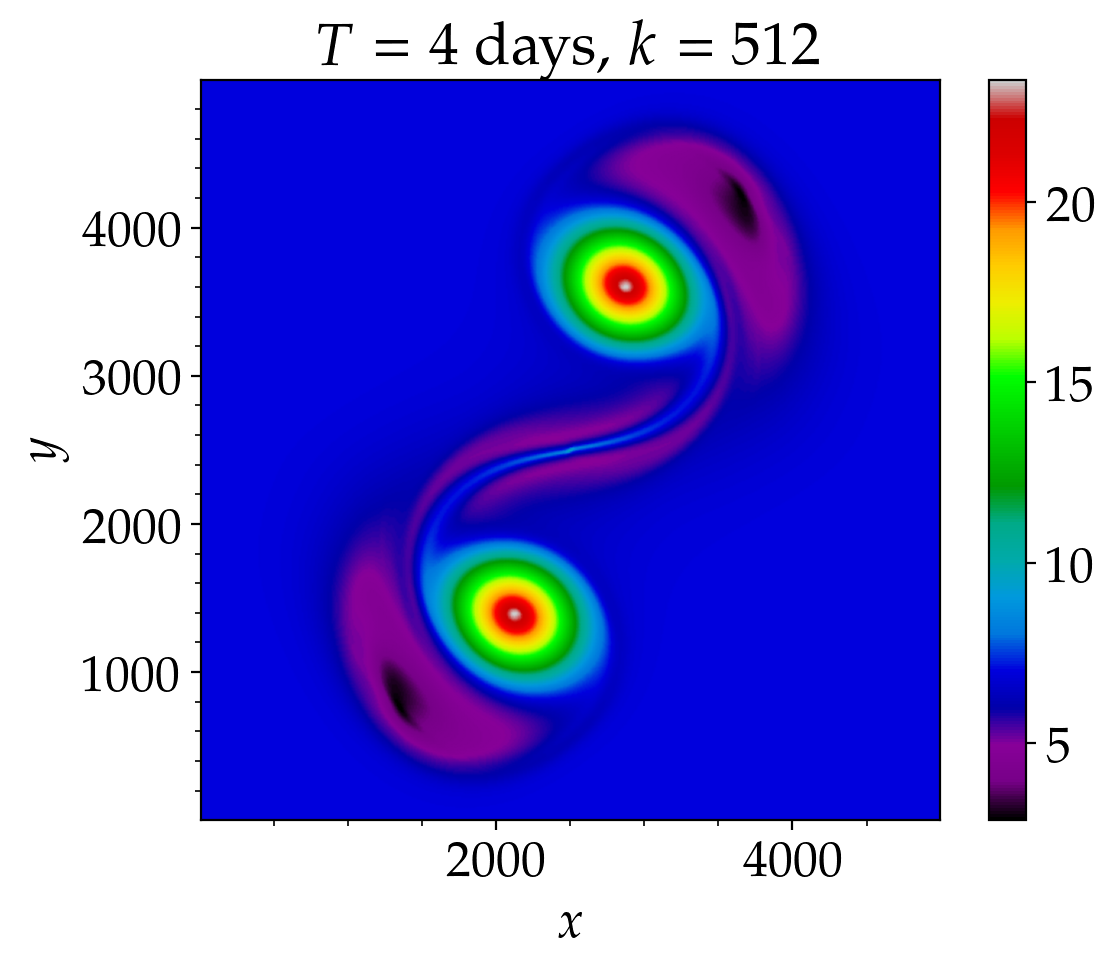}
  \includegraphics[height = 0.2\textheight]{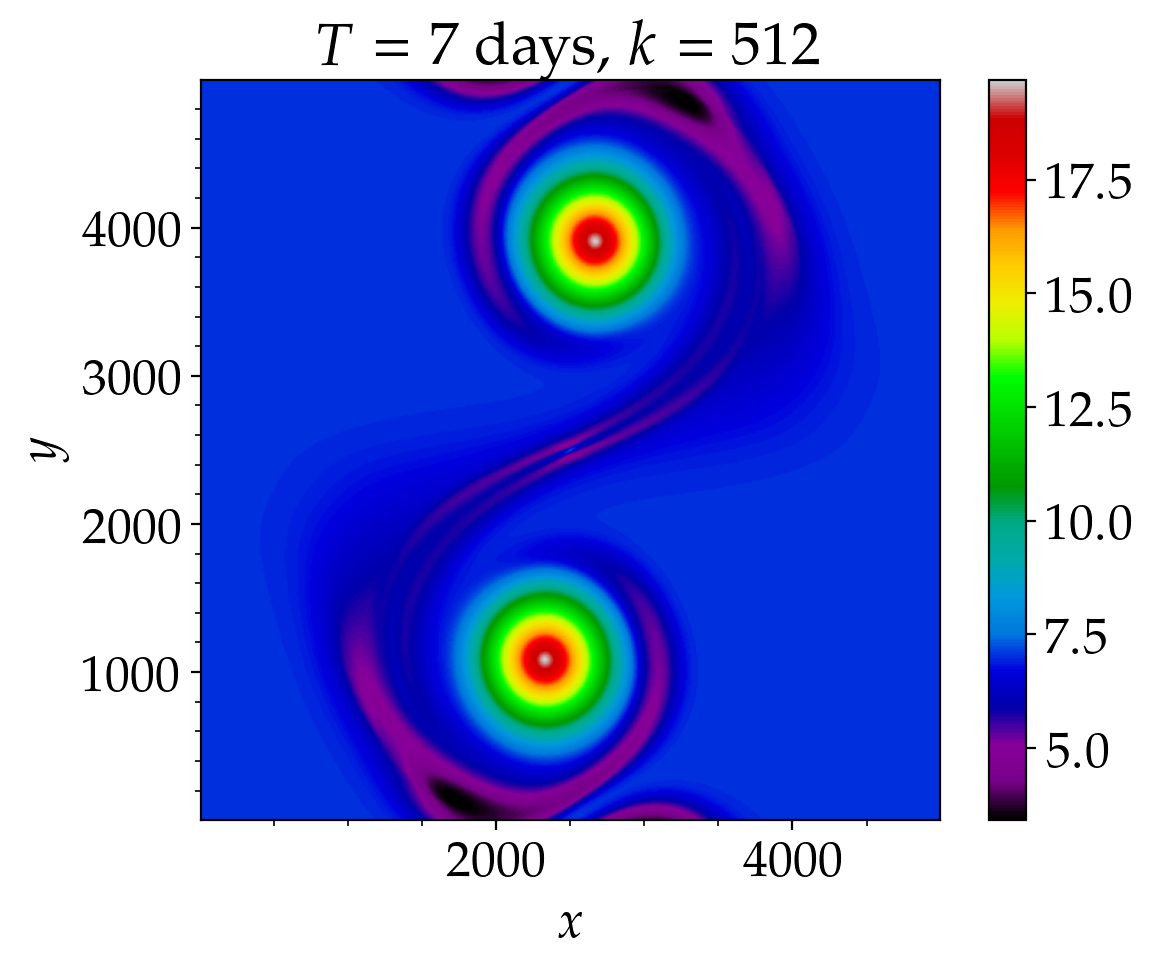}
  \includegraphics[height = 0.2\textheight]{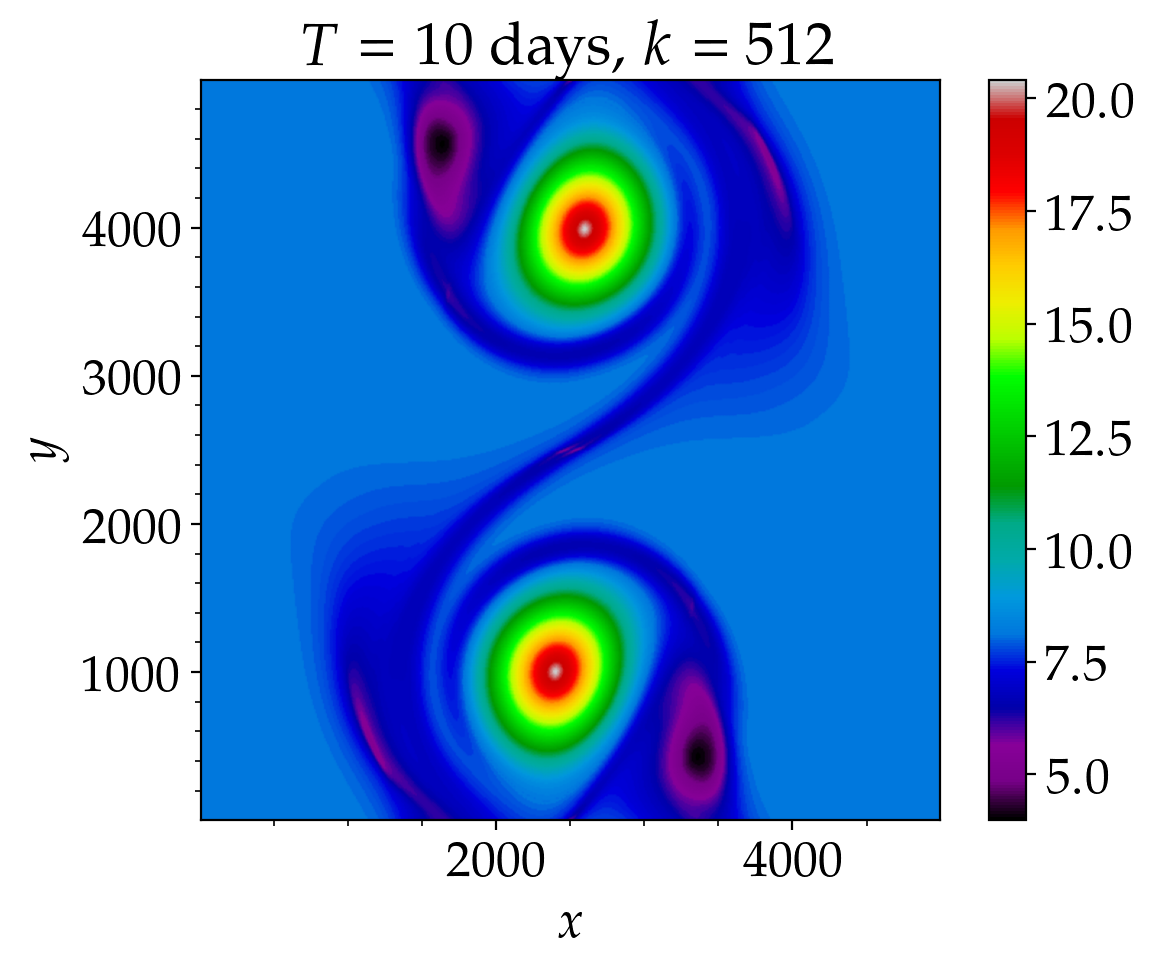}
  \caption{Potential voritcity $PV$ (in days$^{-1}$km$^{-1}$) over time for the vortex pair interaction problem}
  \label{fig:dbl-vortex-pv}
\end{figure}

\begin{figure}[htpb]
    \centering
    \includegraphics[height = 0.2\textheight]{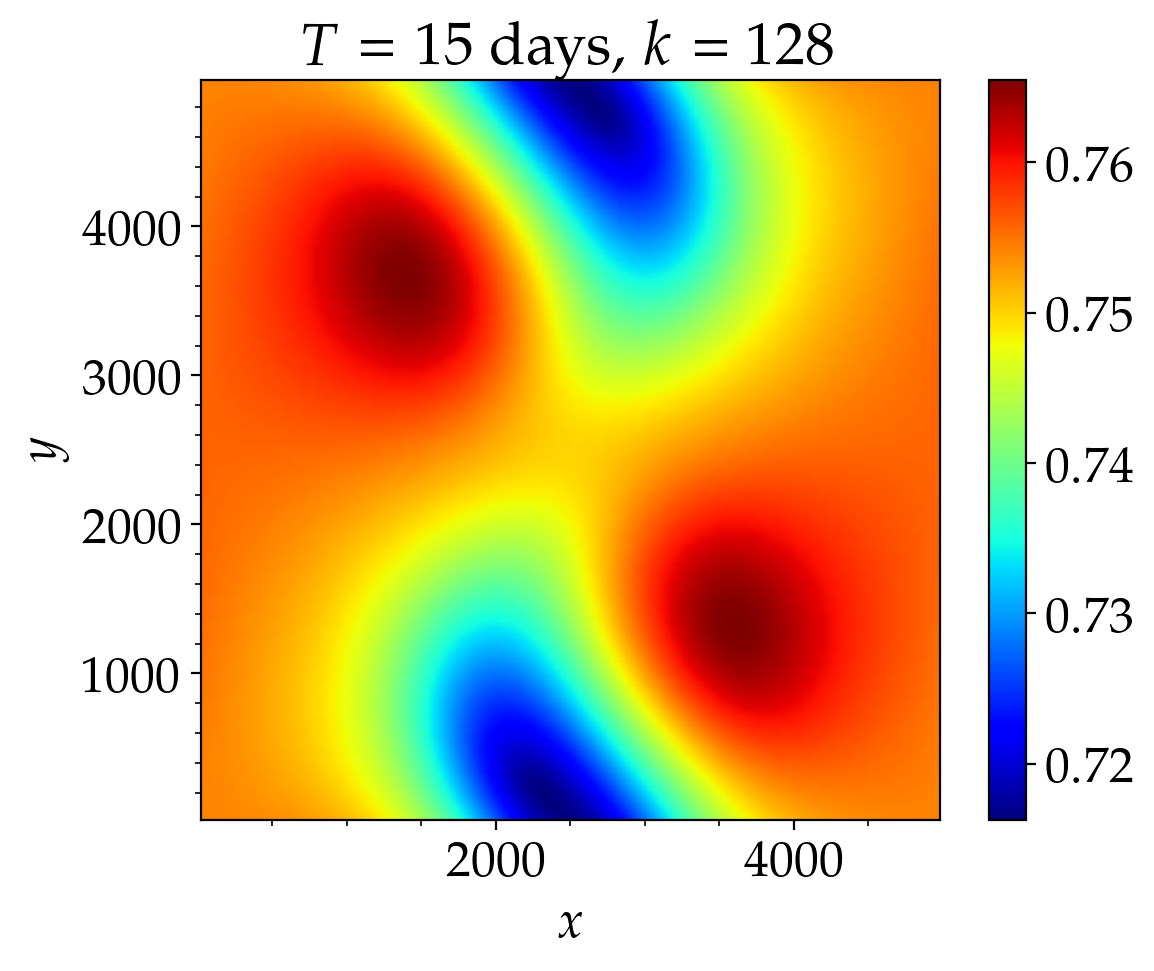}
    \includegraphics[height = 0.2\textheight]{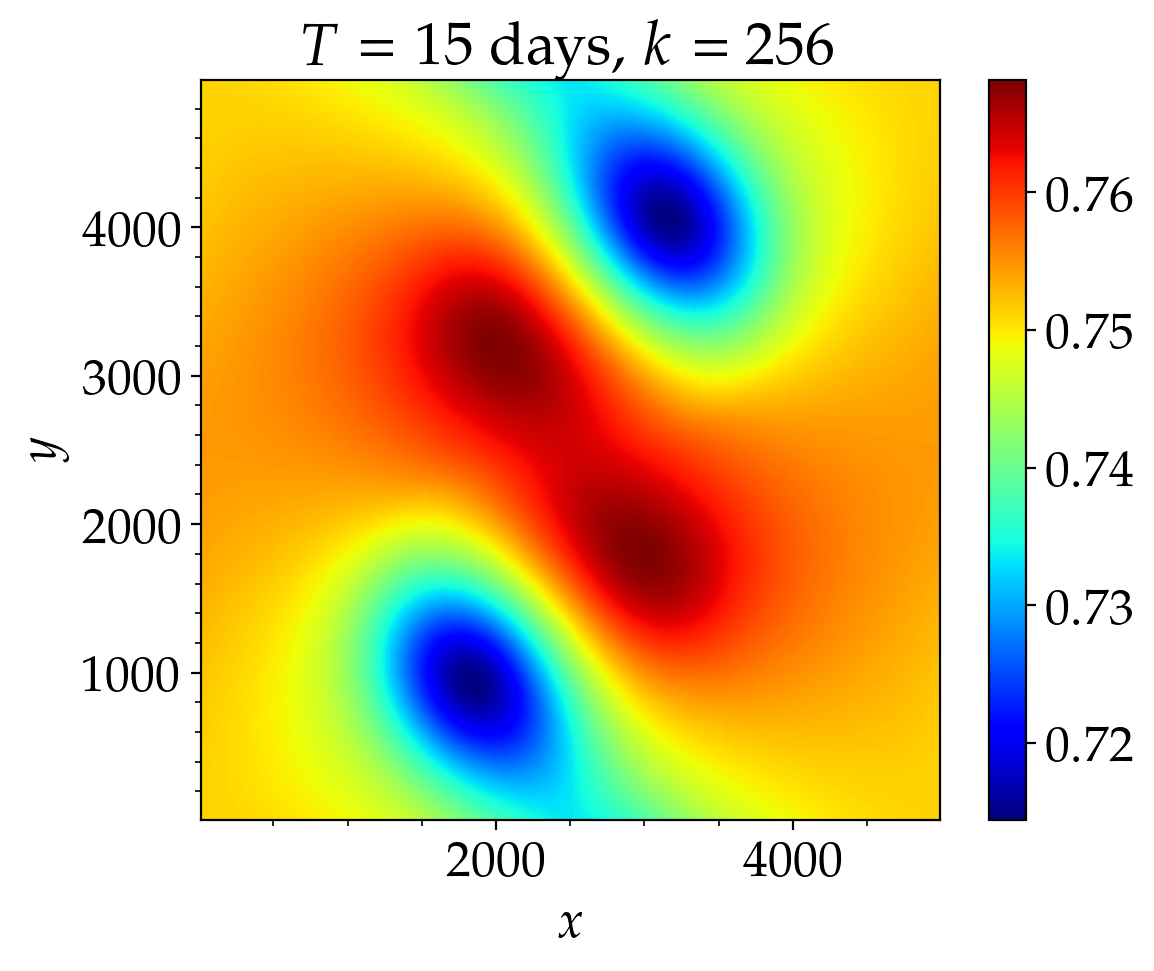}
    \includegraphics[height = 0.2\textheight]{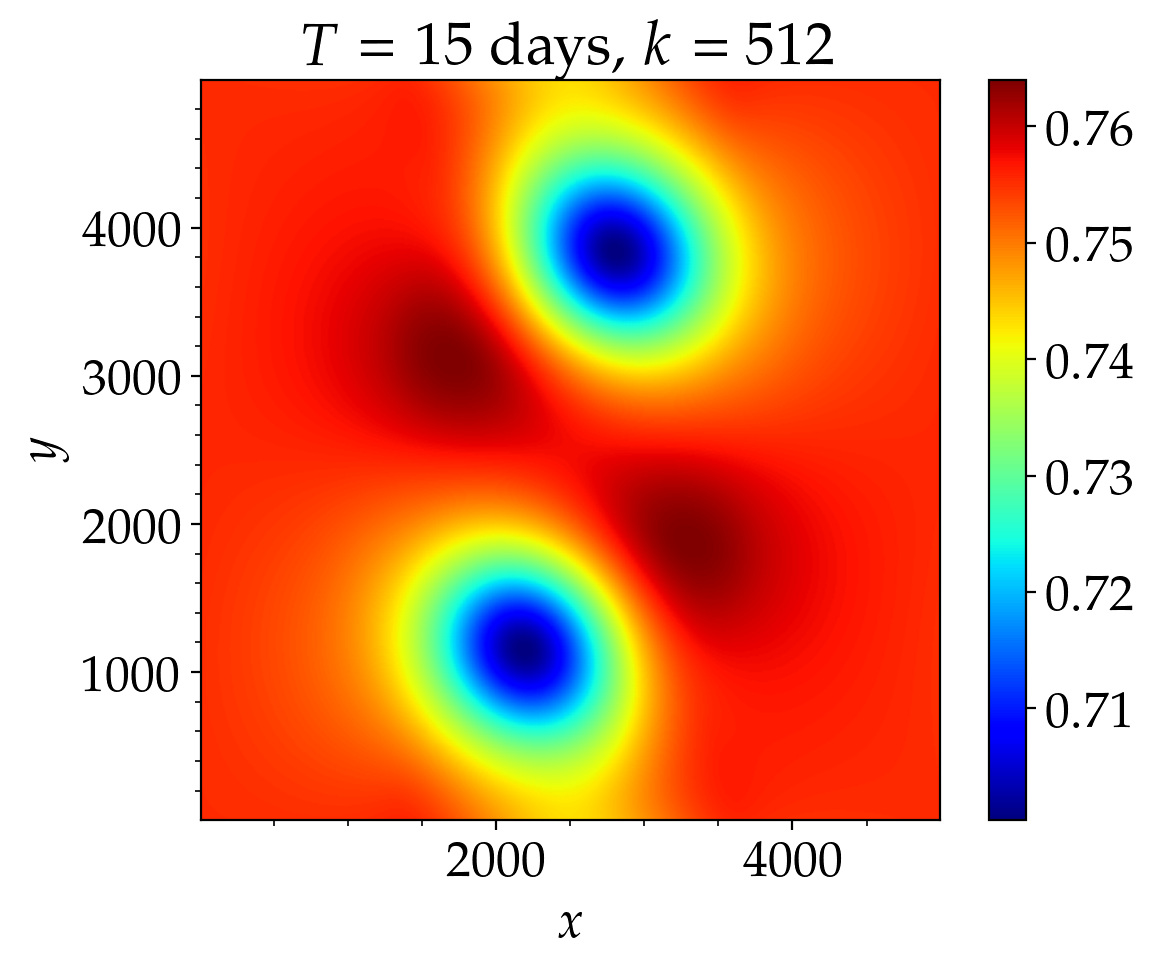}
    \includegraphics[height = 0.2\textheight]{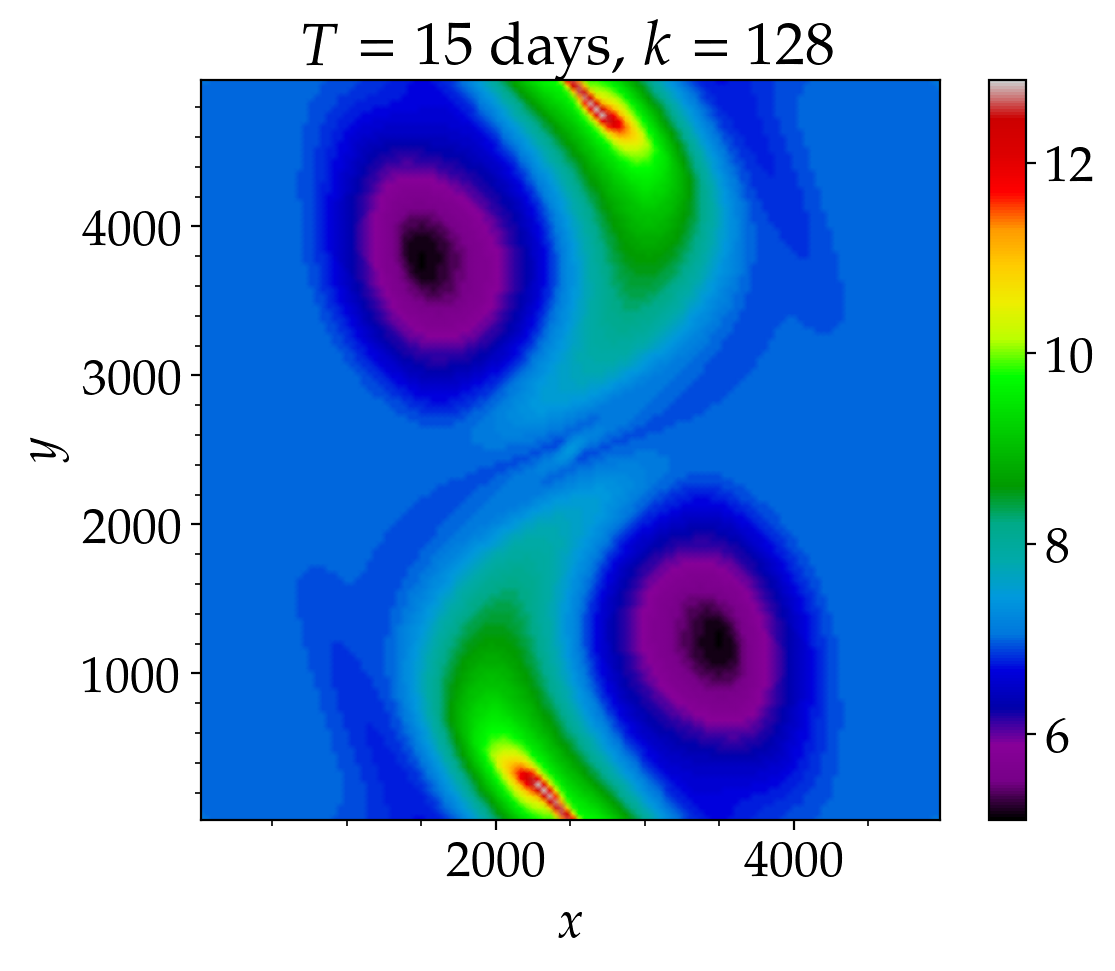}
    \includegraphics[height = 0.2\textheight]{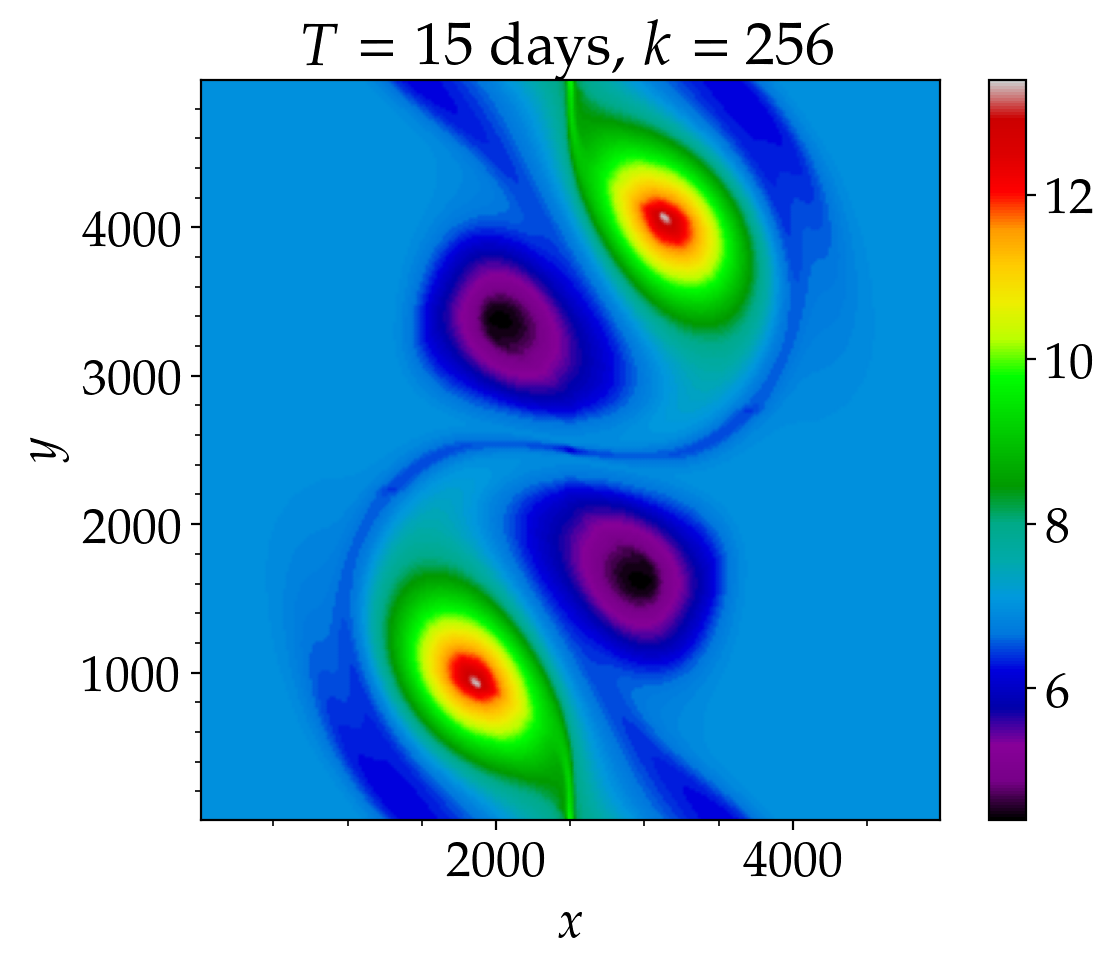}
    \includegraphics[height = 0.2\textheight]{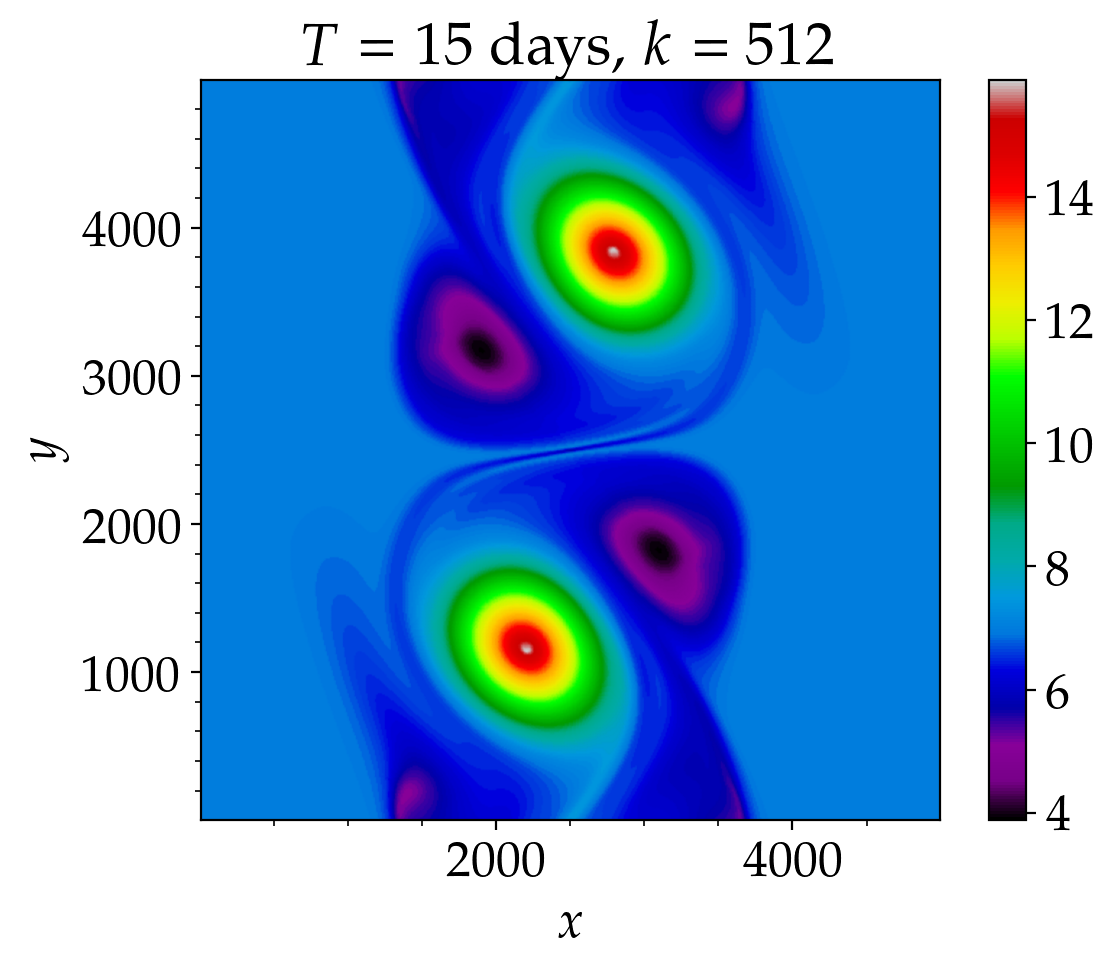}
    \caption{The water height $h$ (in km, top) and the potential
      vorticity $PV$ (in days$^{-1}$km$^{-1}$, bottom) on successive
      meshes at final time $T = 15$ days for the vortex pair
      interaction problem.} 
    \label{fig:pcol-dbl-vortex}
\end{figure}

\begin{figure}
  \centering
  \includegraphics[height = 0.4\textheight]{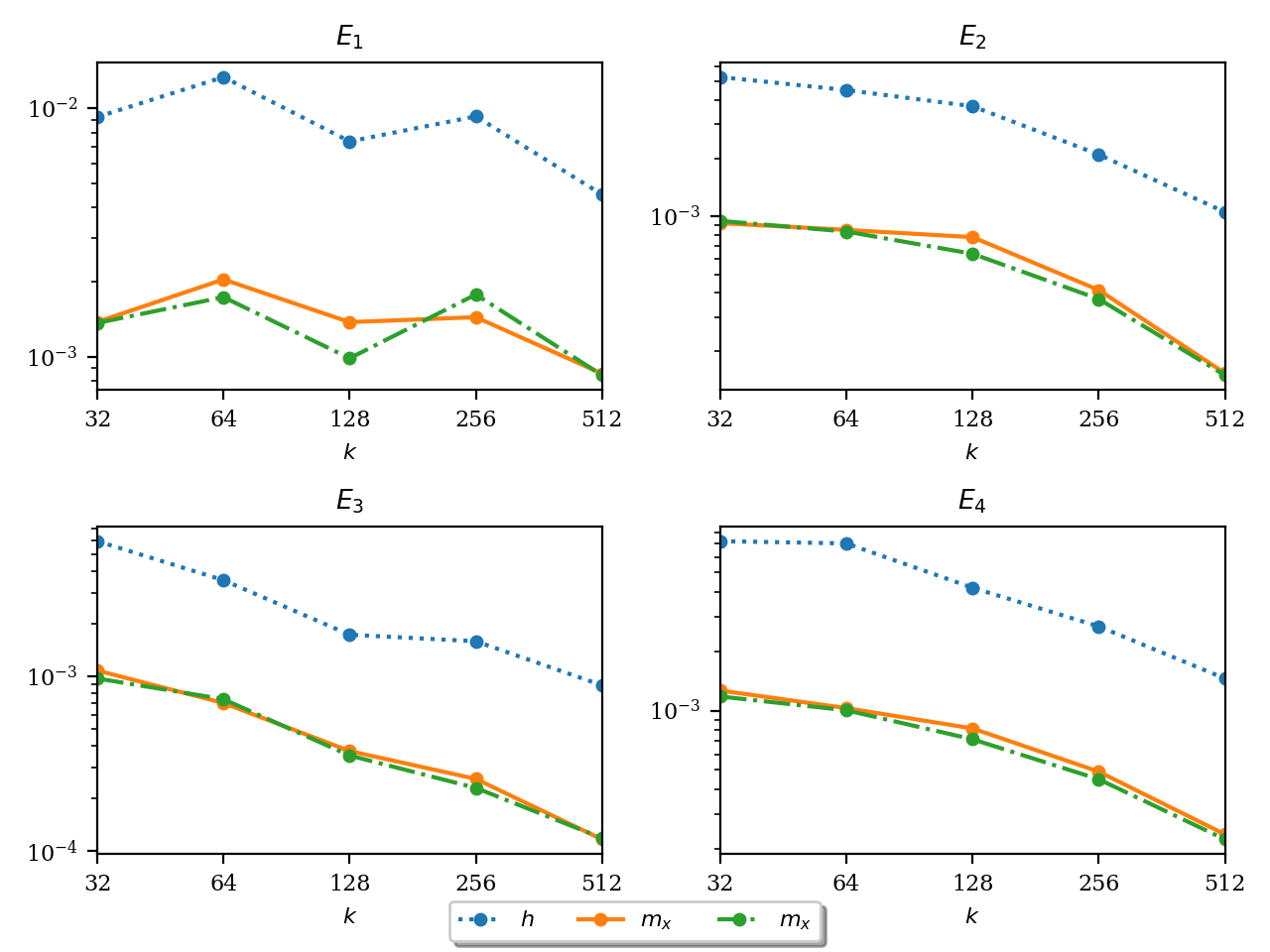}
  \caption{The error profiles for the vortex pair interaction test case.}
  \label{fig:dvt-err-dmv}
\end{figure}

\section{Conclusions}
\label{sec:conc}

We have developed and analyzed a semi-implicit in time finite volume
scheme for the 2D RSW system. The solutions generated by the proposed
scheme enjoy several key properties, including positivity of the water
height, discrete energy stability, well-balancing with respect to
geostrophic steady states, and consistency with the continuous
system. These properties are established using the technique of
stabilization, wherein the convective fluxes of the mass and momentum
equations, as well as the source terms, are modified through the
introduction of carefully chosen perturbation terms. Furthermore, we
have shown that, as the mesh parameters vanish, the numerical
solutions converge weakly to a DMV solution of the 2D RSW system under
suitable boundedness assumptions. Finally, the theoretical results
have been verified through extensive numerical case studies, which
confirm the efficacy and robustness of the proposed scheme.  

\bibliographystyle{abbrv}
\bibliography{ref}

\begin{thebibliography}{10}

\bibitem{AGK23}
K.~R. Arun, R.~Ghorai, and M.~Kar.
\newblock An asymptotic preserving and energy stable scheme for the barotropic
  {E}uler system in the incompressible limit.
\newblock {\em J. Sci. Comput.}, 97(3):73, Nov 2023.

\bibitem{AA24}
K.~R. Arun and A.~Krishnamurthy.
\newblock A semi-implicit finite volume scheme for dissipative measure-valued
  solutions to the barotropic {E}uler system.
\newblock {\em ESAIM Math. Model. Numer. Anal.}, 58(1):47--77, 2024.

\bibitem{ABB+04}
E.~Audusse, F.~c. Bouchut, M.-O. Bristeau, R.~Klein, and B.~Perthame.
\newblock A fast and stable well-balanced scheme with hydrostatic
  reconstruction for shallow water flows.
\newblock {\em SIAM J. Sci. Comput.}, 25(6):2050--2065, 2004.

\bibitem{ADD+21}
E.~Audusse, V.~Dubos, A.~Duran, N.~Gaveau, Y.~Nasseri, and Y.~Penel.
\newblock Numerical approximation of the shallow water equations with
  {C}oriolis source term.
\newblock In {\em C{EMRACS} 2019---geophysical fluids, gravity flows},
  volume~70 of {\em ESAIM Proc. Surveys}, pages 31--44. EDP Sci., Les Ulis,
  2021.

\bibitem{ADG+25}
E.~Audusse, V.~Dubos, N.~Gaveau, and Y.~Penel.
\newblock Energy-stable and linearly well-balanced numerical schemes for the
  nonlinear shallow water equations with the {C}oriolis force.
\newblock {\em SIAM J. Sci. Comput.}, 47(1):A1--A23, 2025.

\bibitem{WB19}
W.~Bauer and F.~Gay-Balmaz.
\newblock Towards a geometric variational discretization of compressible
  fluids: The rotating shallow water equations.
\newblock {\em J. Comput. Dyn.}, 6(1):1--37, 2019.

\bibitem{BAL+14}
G.~Bispen, K.~R. Arun, M.~Luk\'{a}\v{c}ov\'{a}-Medvid'ov\'{a}, and S.~Noelle.
\newblock I{MEX} large time step finite volume methods for low {F}roude number
  shallow water flows.
\newblock {\em Commun. Comput. Phys.}, 16(2):307--347, 2014.

\bibitem{BLZ04}
F.~Bouchut, J.~Le~Sommer, and V.~Zeitlin.
\newblock Frontal geostrophic adjustment and nonlinear wave phenomena in
  one-dimensional rotating shallow water. {II}. {H}igh-resolution numerical
  simulations.
\newblock {\em J. Fluid Mech.}, 514:35--63, 2004.

\bibitem{BF18a}
J.~B\v{r}ezina and E.~Feireisl.
\newblock Measure-valued solutions to the complete {E}uler system.
\newblock {\em J. Math. Soc. Japan}, 70(4):1227--1245, 2018.

\bibitem{CLP08}
M.~J. Castro, J.~A. L\'opez, and C.~Par\'es.
\newblock Finite volume simulation of the geostrophic adjustment in a rotating
  shallow-water system.
\newblock {\em SIAM J. Sci. Comput.}, 31(1):444--477, 2008.

\bibitem{CDK+18}
A.~Chertock, M.~Dudzinski, A.~Kurganov, and M.~Luk\'a\v{c}ov\'a-Medvid'ov\'a.
\newblock Well-balanced schemes for the shallow water equations with {C}oriolis
  forces.
\newblock {\em Numer. Math.}, 138(4):939--973, 2018.

\bibitem{CLK15}
E.~Chiodaroli, C.~De~Lellis, and O.~Kreml.
\newblock Global ill-posedness of the isentropic system of gas dynamics.
\newblock {\em Comm. Pure Appl. Math.}, 68(7):1157--1190, 2015.

\bibitem{CDV17}
F.~Couderc, A.~Duran, and J.-P. Vila.
\newblock An explicit asymptotic preserving low {F}roude scheme for the
  multilayer shallow water model with density stratification.
\newblock {\em J. Comput. Phys.}, 343:235--270, 2017.

\bibitem{LS10}
C.~De~Lellis and L.~Sz\'{e}kelyhidi, Jr.
\newblock On admissibility criteria for weak solutions of the {E}uler
  equations.
\newblock {\em Arch. Ration. Mech. Anal.}, 195(1):225--260, 2010.

\bibitem{DM22}
V.~Desveaux and A.~Masset.
\newblock A fully well-balanced scheme for shallow water equations with
  {C}oriolis force.
\newblock {\em Commun. Math. Sci.}, 20(7):1875--1900, 2022.

\bibitem{DiP85}
R.~J. DiPerna.
\newblock Measure-valued solutions to conservation laws.
\newblock {\em Arch. Rational Mech. Anal.}, 88(3):223--270, 1985.

\bibitem{FKK+20}
E.~Feireisl, C.~Klingenberg, O.~Kreml, and S.~Markfelder.
\newblock On oscillatory solutions to the complete {E}uler system.
\newblock {\em J. Differential Equations}, 269(2):1521--1543, 2020.

\bibitem{FLM+21a}
E.~Feireisl, M.~Luk\'{a}\v{c}ov\'{a}-Medvid'ov\'{a}, H.~Mizerov\'{a}, and
  B.~She.
\newblock {\em Numerical analysis of compressible fluid flows}, volume~20 of
  {\em MS\&A. Modeling, Simulation and Applications}.
\newblock Springer, Cham, [2021] \copyright 2021.

\bibitem{FLM+21b}
E.~Feireisl, M.~Luk\'{a}\v{c}ov\'{a}-Medvid'ov\'{a}, B.~She, and Y.~Wang.
\newblock Computing oscillatory solutions of the {E}uler system via
  {$\K$}-convergence.
\newblock {\em Math. Models Methods Appl. Sci.}, 31(3):537--576, 2021.

\bibitem{FMT11}
U.~S. Fjordholm, S.~Mishra, and E.~Tadmor.
\newblock Well-balanced and energy stable schemes for the shallow water
  equations with discontinuous topography.
\newblock {\em J. Comput. Phys.}, 230(14):5587--5609, 2011.

\bibitem{GR21}
E.~Godlewski and P.-A. Raviart.
\newblock {\em Numerical approximation of hyperbolic systems of conservation
  laws}, volume 118 of {\em Applied Mathematical Sciences}.
\newblock Springer-Verlag, New York, [2021] \copyright 2021.
\newblock Second edition [of 1410987 ].

\bibitem{HLN18}
R.~Herbin, J.-C. Latch\'{e}, and T.~T. Nguyen.
\newblock Consistent segregated staggered schemes with explicit steps for the
  isentropic and full {E}uler equations.
\newblock {\em ESAIM Math. Model. Numer. Anal.}, 52(3):893--944, 2018.

\bibitem{HLZ20}
R.~Herbin, J.-C. Latch\'{e}, and C.~Zaza.
\newblock A cell-centred pressure-correction scheme for the compressible
  {E}uler equations.
\newblock {\em IMA J. Numer. Anal.}, 40(3):1792--1837, 2020.

\bibitem{KP07}
A.~Kurganov and G.~Petrova.
\newblock {A second-order well-balanced positivity preserving central-upwind
  scheme for the Saint-Venant system}.
\newblock {\em Commun. Math. Sci.}, 5(1):133 -- 160, 2007.

\bibitem{LeF14}
P.~G. LeFloch.
\newblock {\em Structure-Preserving Shock-Capturing Methods: Late-Time
  Asymptotics, Curved Geometry, Small-Scale Dissipation, and Nonconservative
  Products}, pages 179--222.
\newblock Springer International Publishing, Cham, 2014.

\bibitem{LNK07}
M.~Luk\'a\v{c}ov\'a-Medvid'ov\'a, S.~Noelle, and M.~Kraft.
\newblock Well-balanced finite volume evolution {G}alerkin methods for the
  shallow water equations.
\newblock {\em J. Comput. Phys.}, 221(1):122--147, 2007.

\bibitem{NPP+06}
S.~Noelle, N.~Pankratz, G.~Puppo, and J.~R. Natvig.
\newblock Well-balanced finite volume schemes of arbitrary order of accuracy
  for shallow water flows.
\newblock {\em J. Comput. Phys.}, 213(2):474--499, 2006.

\bibitem{Par06}
C.~Par\'{e}s.
\newblock Numerical methods for nonconservative hyperbolic systems: a
  theoretical framework.
\newblock {\em SIAM J. Numer. Anal.}, 44(1):300--321, 2006.

\bibitem{PV16}
M.~Parisot and J.-P. Vila.
\newblock Centered-potential regularization for the advection upstream
  splitting method.
\newblock {\em SIAM J. Numer. Anal.}, 54(5):3083--3104, 2016.

\bibitem{Ped12}
J.~Pedlosky.
\newblock {\em Geophysical Fluid Dynamics}.
\newblock Springer Study Edition. Springer New York, 2012.

\bibitem{Ped97}
P.~Pedregal.
\newblock {\em Parametrized measures and variational principles}, volume~30 of
  {\em Progress in Nonlinear Differential Equations and their Applications}.
\newblock Birkh\"{a}user Verlag, Basel, 1997.

\end{thebibliography}
\end{document}